\newtheorem{fact}{Fact}
\newcommand{\ZZ}{\mathbb{Z}}
\newcommand{\OO}{\mathcal{O}}
\newcommand{\UG}{\langle u \rangle}
\newcommand{\GA}{\Gamma_{\OO}}
\newcommand{\XO}{X_{\mathcal{O}}}
\newcommand{\XX}{\mathfrak{X}}
\newcommand{\al}{\alpha}
\newcommand{\be}{\beta}
\newcommand{\ra}{\xrightarrow{}}
\newcommand{\ang}[1]{\langle #1 \rangle}
\newcommand{\Gprime}{\mathcal{G}_{PP}}
\newtheorem{definition}{Definition}
\newtheorem{observation}{Observation}
\newtheorem{remark}{Remark}
\newtheorem{theorem}{Theorem}
\newtheorem{lemma}{Lemma}
\newtheorem{corollary}{Corollary}
\newtheorem{construction}{Construction}
\newtheorem{claim}{Claim}
\newenvironment{sketch}{\hspace{-0.65 cm} \textbf{Proof Sketch.}}
\providecommand{\keywords}[1]
{
  \small	
  \textbf{\textit{Keywords---}} #1
}
\def\centerarc[#1](#2)(#3:#4:#5)
\tikzset{
    cross/.pic = {
    \draw[rotate = 45] (-#1,0) -- (#1,0);
    \draw[rotate = 45] (0,-#1) -- (0, #1);
    }
}
\title{On Oriented Diameter of Power Graphs}
\date{} 
 \author{
Deepu Benson \\
 IIT Gandhinagar, India \\
    \texttt{bensondeepu@gmail.com} \\
  \and
  Bireswar Das \\
  IIT Gandhinagar, India \\
 \texttt{bireswar@iitgn.ac.in} \\
  \and
  Dipan Dey \\
  IIT Gandhinagar, India \\
  \texttt{dey\_dipan@iitgn.ac.in} \\
  \and
    Jinia Ghosh \\
  IIT Gandhinagar, India \\
  \texttt{jiniag@iitgn.ac.in} \\  
}
\begin{document}

\maketitle


\begin{abstract}
In this paper, we study the oriented diameter of power graphs of groups. We show that a $2$-edge connected power graph of a finite group has oriented diameter at most $4$. We prove that the power graph of the cyclic group of order $n$ has oriented diameter $2$ for all $n\neq 1,2,4,6$. For non-cyclic finite nilpotent groups, we show that the oriented diameter of corresponding power graphs is at least $3$. Moreover, we provide necessary and sufficient conditions for the oriented diameter of $2$-edge connected power graphs of finite non-cyclic nilpotent groups to be either $3$ or $4$. This, in turn, gives an algorithm for computing the oriented diameter of the power graph of a given nilpotent group that runs in time polynomial in the size of the group.
\end{abstract}
\keywords{Oriented Diameter, Power Graphs, Algorithm, Finite Groups, Nilpotent Groups}

\section{ Introduction}

An \emph{orientation} $\OO$  of an undirected graph $X$ is an assignment of exactly one direction to each of the edges of $X$. An orientation is called a \emph{strong orientation} if any two vertices are reachable from each other by directed paths introduced by the orientation. It is easy to see that a graph with a bridge cannot admit a strong orientation. In 1939, Robbins \cite{robbins1939theorem} proved that a graph is strongly orientable if and only if it is $2$-edge connected\footnote{A graph is $2$-edge connected if and only if it is bridgeless and connected.}.

The \emph{diameter} of an undirected graph is the maximum distance between any two vertices in the graph. We denote the class of $2$-edge connected undirected graphs with diameter $d$ by $\mathscr{F}_d$.
For a directed graph $\mathfrak{X}$, the distance $d_\mathfrak{X}(u,v)$ of a vertex $v$ from a vertex $u$ is the length of a shortest directed path from $u$ to $v$. The \emph{diameter} of a directed graph $\mathfrak{X}$, denoted by $diam(\mathfrak{X})$, is the number $\max_{u,v}d_\mathfrak{X}(u,v)$. We write $diam(\mathfrak{X}):=\infty$ if there is no directed path from $u$ to $v$ for some pair of vertices $u$ and $v$ in $\mathfrak{X}$. Let $X_\OO$ be the directed graph obtained from $X$ after introducing the orientation $\OO$. The \emph{oriented diameter} $OD(X)$ of $X$ is defined to be the minimum number in the set $\{diam(X_\OO) \ | \ \OO \textrm{ is an orientation of } X\}$. Let $OD(\mathscr{F}_d):=max \{OD(X) \ | \ X\in \mathscr{F}_d\}$. Note that $OD(X)=\infty$ if the graph $X$ is not $2$-edge connected. For a graph with a single vertex, we assume both the diameter and the oriented diameter to be $0$.

While Robbins \cite{robbins1939theorem} provided the necessary and sufficient condition for the existence of a strong orientation of a graph, the paper does not offer any quantitative analysis of the difference in distances between a pair of vertices before and after strongly orienting the graph. In 1978, Chv{\'a}tal and Thomassen \cite{chvatal1978distances} accepted this challenge and proved that $\frac{1}{2} d^2 + d \leq OD(X) \leq 2d^2+2d$ for all $X\in \mathscr{F}_d$. The upper bound was improved by Babu et al. \cite{babu2021improvement} subsequently. 

The exploration of oriented diameters for classes of graphs with small values of diameter, as well as specific graph classes, was prompted by the quadratic upper bound on the oriented diameter. AT-free graphs \cite{fomin2004free} and chordal graphs \cite{fomin2004complexity} are popular such graph classes investigated. Attempts were also made to improve the general bound for $OD(\mathscr{F}_d)$ provided by Chv{\'a}tal and Thomassen \cite{chvatal1978distances} for specific values of $d$. From a result in \cite{fomin2004complexity}, it can be seen that $OD(\mathscr{F}_1) = 3$. Chv{\'a}tal and Thomassen \cite{chvatal1978distances} proved that $OD(\mathscr{F}_2) = 6$. A tight bound was obtained for $OD(\mathscr{F}_3)$ also. The results from \cite{kwok2010oriented, WANG2022374} proved that $OD(\mathscr{F}_3) = 9$. However, exact bounds are not available when $d > 3$. The current best upper bound is $21$, and the lower bound is $12$ for $OD(\mathscr{F}_4)$  \cite{babu2021improvement, chvatal1978distances}. The upper and lower bounds for $OD(\mathscr{F}_d)$ when $d \geq 5$ also follow from these two works. Moreover, these results demonstrate the challenging nature of determining the oriented diameter for classes of graphs, even when the diameter is very small.

There are several classes of graphs defined in terms of groups, e.g., Cayley graphs, commuting graphs, power graphs, etc. Cameron's survey contains an interesting collection of results on such graphs \cite{cameron2022graphs}.  In this paper, we focus on power graphs of finite groups (\Cref{def: power graph}), which were defined by Chakrabarty et al. \cite{chakrabarty2009undirected}.
 Abawajy et al. \cite{abawajy2013power} and Kumar et al. \cite{kumar2021recent} gave surveys on power graphs. 

Our primary motivation was to investigate if the symmetry structure of the underlying group of a power graph is useful for studying its oriented diameter. In this paper, we provide strong evidence that the algebraic structure is indeed helpful.

The diameter of any $2$-edge connected finite power graph $Pow(G)$ is at most $2$, since the identity element of $G$ is adjacent to every other vertex of the graph. Moreover, $Pow(G)$ is a complete graph if and only if $G$ is a cyclic group of prime power order \cite{chakrabarty2009undirected}. Hence, the results of Chv{\'a}tal et al. \cite{chvatal1978distances}, and Fomin et al. \cite{fomin2004complexity} imply that $OD(Pow(G))\leq 6$ for all $2$-edge connected power graphs. We obtain a tighter upper bound for power graphs by showing that every $2$-edge connected power graph has oriented diameter at most $4$ (\Cref{4 upper bound}). Moreover, the condition of $Pow(G)$ being $2$-edge connected simply translates to $G$ not having any maximal cyclic subgroup of order $2$.

Since power graphs have diameter at most 2, it is an interesting question to identify the classes of groups whose power graphs have oriented diameter 2. We first focus on the power graphs of finite cyclic groups.

For a graph to have an oriented diameter $2$, Czabarka et al. \cite{czabarka2019degree} and Cochran et al. \cite{cochran2021size} gave sufficient conditions on the minimum degree and the number of edges, respectively. Czabarka et al. \cite{czabarka2019degree} showed that if the minimum degree of a graph of order $n$ is at least $\frac{n}{2} + \frac{\ln n}{\ln(4/3)}$, then the graph has oriented diameter $2$. Whereas Cochran et al. \cite{cochran2021size} showed that if a graph of order $n$ has at least $\binom{n}{2}-n+5$ edges, then the graph has oriented diameter $2$. However, these conditions are \emph{not} satisfied by infinitely many power graphs of cyclic groups \footnote{In particular, for $n=2.3.5.k$ (or $n=3.5.7.11.13.k$), where $k$ is a squarefree natural number such that $gcd(k,2.3.5)=1$ (or $gcd(k,3.5.7.11.13)=1$), the power graph $Pow(\ZZ_n)$ does not satisfy the degree condition and the edge set size condition. 
See \Cref{app_degree_size_condition}}. We show that the oriented diameter of the power graph of a finite cyclic group of order $n$ is $2$, except $n=1,2,4,6$ (\Cref{cyclic_main}).

Nilpotent groups are important classes of groups that have been studied extensively (see, e.g., \cite{hall2018theory}). Some interesting results regarding the power graphs of finite nilpotent groups can be found in \cite{bera2022line,panda2023minimum}. We show that the oriented diameter of finite non-cyclic nilpotent groups is either $3$ or $4$. Moreover, we determine the exact conditions under which the oriented diameter is 3 and 4. Our main result in this paper is a complete group theoretic characterization of the oriented diameter of power graphs of nilpotent groups (\Cref{nil: main result}). We give this characterisation in terms of the uniqueness of certain subgroups and the existence of a certain maximal cyclic subgroup.

Next, we focus on the computational problem of computing the oriented diameter of a given graph $X$.  A key result by Chv{\'a}tal et al. \cite{chvatal1978distances} showed that it is NP-hard to decide whether a given undirected graph has oriented diameter $2$. This leads to the investigation of several versions of the problem by restricting the class of graphs. For computing orientations of AT-free graphs and chordal graphs, approximation algorithms are provided by Fomin et al. \cite{fomin2004free} and Fomin, Matamala and Rapaport \cite{fomin2004complexity} respectively. Eggemann and Noble \cite{eggemann2009minimizing}  designed a fixed-parameter tractable (FPT) algorithm that decides if a planar graph $X$ has oriented diameter at most $l$, where $l$ is the parameter. 

We show that the oriented diameter of the power graphs of nilpotent groups can be computed in polynomial time. It turns out it is rather straightforward to check the conditions in the characterization of the oriented diameter of power graphs of finite nilpotent groups in polynomial time. 

Our results on the oriented diameter of power graphs hinge on figuring out interesting combinatorial and algebraic structures of the power graphs. For example, the results on the power graphs of cyclic groups depend on a careful ``decomposition'' of the graph in ``layers'' using its subgroup structures, which in turn helps us to apply an inductive approach for constructing a diameter $2$ orientation (see \Cref{section: cyclic}). 

The orientations we construct in this paper depend on careful designs of \textit{gadgets} ($P_4$-gadget in \Cref{section: cyclic} and $C_4$-gadget in  \Cref{section: nilpotent }) and their placements in $Pow(G)$ using group theoretic properties (
\Cref{cyclic: tool for incrementing}, 
\Cref{nil: two odd primes}).
For a nilpotent group  $G$, we prove that for $Pow(G)$ to have oriented diameter $3$, the oriented edges of $Pow(G)$ must obey certain uniformity conditions (\Cref{Fat Arrow Lemma}). While proving an important lower bound on $OD(Pow(G))$ for nilpotent group $G$, these conditions are crucial for cutting down the number of possibilities of orienting edges in $Pow(G)$ (\Cref{nil: OD strictly greater than 3}).

 \section{Preliminaries}
 \label{preli}
For a simple graph $X=(V,E)$, the vertex set of $X$ is denoted by $V(X)$, and the edge set of $X$ is denoted by $E(X)$. For basic definitions and notations from graph theory, an interested reader can refer to any standard textbook (e.g., \cite{west2001introduction}). The \textit{induced subgraph} of $X$ on $S\subseteq V(X)$ is denoted by $X[S]$.  We denote a \textit{path} (both directed and undirected) from $u_1$ to $u_k$ by the sequence of vertices $u_1u_2\dots u_k$. A vertex $u$ is said to be a \textit{dominating vertex} of a graph $X$ if it is adjacent to every other vertex of $V(X)$. If $S,T \subseteq V(X)$, then $E(S,T)$ denotes the set of edges $\{s,t\}\in E(X)$, i.e., the set of edges with one endpoint from $S$ and another endpoint from $T$.

\begin{definition}\label{def: partial orientation}
    Let $X=(V,E)$ be an undirected graph. A subset $\OO \subseteq V\times V$ is said to be a partial orientation of $X$ if $\OO$ is obtained from assigning exactly one direction to a subset $E'$ of the edge set $E$. That is, for all $\{u,v\} \in E'$, either $(u,v)$ or $(v,u)$ is in $\OO$.
    We use $X_{\OO}$ to denote the directed graph $(V,\OO)$. Further, we denote the distance from a vertex $x$ to a vertex $y$ in the directed graph $X_{\OO}$ by $d_{X_{\OO}}(x,y)$.
\end{definition}

\begin{observation}\label{obs od is at most diam of partial orient}
    If $\OO$ is a partial orientation of an undirected graph $X$, then $OD(X) \leq diam(X_{\OO}) $.
\end{observation}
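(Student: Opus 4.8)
The plan is to reduce the claim to the elementary fact that orienting additional edges can only shorten directed distances. Given the partial orientation $\OO$, which orients a subset $E' \subseteq E$, I would first extend it to a full orientation $\OO'$ of $X$ by assigning an arbitrary direction to every edge in $E \setminus E'$ while keeping the direction already chosen by $\OO$ on each edge of $E'$. Such an extension always exists, and by construction it satisfies $\OO \subseteq \OO'$ as sets of directed edges, since $\OO'$ retains every arc of $\OO$ and merely adds new ones.

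The key step is then a monotonicity observation. Because $X_{\OO}$ and $X_{\OO'}$ share the same vertex set $V$ and $\OO \subseteq \OO'$, every directed path in $X_{\OO}$ is also a directed path in $X_{\OO'}$. Consequently $d_{X_{\OO'}}(u,v) \le d_{X_{\OO}}(u,v)$ for every ordered pair $(u,v)$, and taking the maximum over all pairs yields $diam(X_{\OO'}) \le diam(X_{\OO})$.

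Finally, I would invoke the definition of the oriented diameter. Since $\OO'$ is a genuine (full) orientation of $X$, it is one of the candidates in the minimisation defining $OD(X)$, so $OD(X) \le diam(X_{\OO'})$. Chaining the two inequalities gives $OD(X) \le diam(X_{\OO'}) \le diam(X_{\OO})$, as required.

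There is no real obstacle here: in the degenerate case $diam(X_{\OO}) = \infty$ the inequality is vacuous, and the only thing that genuinely needs checking is the monotonicity of directed distances under the addition of arcs, which is immediate from the definition of a shortest directed path. The value of the observation is practical rather than technical, since it lets later constructions exhibit a good orientation by specifying directions on only a carefully chosen subset of edges (the gadget edges) and bounding the diameter of the resulting partially oriented graph, without committing to the orientation of the remaining edges.
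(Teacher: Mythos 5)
Your proof is correct and is exactly the argument the paper implicitly relies on (the observation is stated without proof there): extend $\OO$ arbitrarily to a full orientation, note that adding arcs can only decrease directed distances, and use that the full orientation is a candidate in the minimum defining $OD(X)$. Nothing is missing.
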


 The basic definitions and facts on group theory can be found in any standard book (e.g.,  \cite{rotman2012introduction}). In this paper, we only consider \emph{finite groups}. A subset $H$ of a group $G$ is called a \textit{subgroup} of $G$ if $H$ forms a group under the binary operation of $G$. This is denoted by $H \leq G$.

The number of elements in a group $G$ is called the \textit{order of the group}, denoted by $|G|$. The \textit{order of an element} $g$ in $G$, denoted by $o(g)$, is the smallest positive integer $m$ such that $g^m=e$, where $e$ is the identity element. A group $G$ is called \emph{cyclic} if $G=\{g, g^2, \dots, g^{m-1}, g^m=e\}$ for some $g\in G$. The element $g$ is called a generator of $G$, and we write $G=\langle g \rangle$. The set of all generators of a cyclic group $G$ is denoted by $gen(G)$.
For a cyclic group $G$, $|gen(G)|=\phi(|G|)$, where $\phi$ is the Euler's totient function. Recall that $\phi(p_1^{\al_1}\dots p_k^{\al_k})=p_1^{\al_1-1}(p_1-1)\dots p_k^{\al_k-1}(p_k-1)$, where $p_i$'s are distinct primes and $\al_i$'s are natural numbers. We call a cyclic subgroup $C$ of $G$ a \textit{maximal cyclic subgroup} of $G$ if $C$ is not properly contained in any cyclic subgroup of $G$. We use the following well-known group theoretic fact extensively in this paper.

\begin{fact}
\label{gtf_CLT}
 A finite cyclic group of order $n$ has a unique subgroup (which is also cyclic) of order $d$ for each divisor $d$ of $n$. 
\end{fact}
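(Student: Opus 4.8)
The plan is to fix a cyclic group $G=\langle g\rangle$ of order $n$ and establish the claim in three stages: first that every subgroup of $G$ is cyclic, then the existence of a subgroup of each order $d\mid n$, and finally its uniqueness. The automatic cyclicity of subgroups is what lets me drop the parenthetical ``which is also cyclic'' into a single uniform argument rather than treating it separately.

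For cyclicity, I would take an arbitrary nontrivial subgroup $H\leq G$ and let $m$ be the least positive integer with $g^m\in H$. Writing any element of $H$ as $g^k$ and applying the division algorithm $k=qm+r$ with $0\le r<m$, the element $g^r=g^k(g^m)^{-q}$ lies in $H$, so minimality of $m$ forces $r=0$; hence every element of $H$ is a power of $g^m$ and $H=\langle g^m\rangle$. For existence, given $d\mid n$ I would simply exhibit the subgroup $\langle g^{n/d}\rangle$: since $o(g^{n/d})=n/\gcd(n,n/d)=n/(n/d)=d$, this is a cyclic subgroup of order exactly $d$.

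The main point is uniqueness, and this is where the actual counting lives. The key observation is that any subgroup $H$ of order $d$ consists entirely of elements $x$ satisfying $x^d=e$, since by Lagrange the order of each element of $H$ divides $d$. It therefore suffices to show that the solution set $S=\{x\in G : x^d=e\}$ has exactly $d$ elements: then $H\subseteq S$ together with $|H|=d=|S|$ would force $H=S$, which pins $H$ down independently of any choices and so yields uniqueness. To count $S$, note that $(g^k)^d=e$ holds iff $n\mid kd$ iff $(n/\gcd(n,d))\mid k$, and the number of $k\in\{0,1,\dots,n-1\}$ divisible by $n/\gcd(n,d)$ is precisely $\gcd(n,d)$.

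I expect the evaluation of $|S|$ to be the crux of the argument, since the cyclicity and existence steps are routine. The decisive step is recognizing that the hypothesis $d\mid n$ is exactly what makes $\gcd(n,d)=d$, so that the solution count $\gcd(n,d)$ matches the required subgroup order $d$ on the nose; without $d\mid n$ the set $S$ would be too small to contain a subgroup of order $d$, which is also why the statement is restricted to divisors of $n$.
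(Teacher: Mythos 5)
Your proof is correct. The paper states this as a well-known fact and offers no proof of its own, so there is nothing to compare against; your argument is the standard textbook one, and each of its three stages is sound. The cyclicity step via the division algorithm, the existence step via $\langle g^{n/d}\rangle$ with $o(g^{n/d}) = n/\gcd(n,n/d) = d$, and the uniqueness step — every order-$d$ subgroup sits inside $S=\{x : x^d=e\}$ by Lagrange, and $|S|=\gcd(n,d)=d$ because $d\mid n$, forcing $H=S$ — are all correctly executed, including the count of $k\in\{0,\dots,n-1\}$ with $(n/\gcd(n,d))\mid k$.
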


A group $G$ is called a \textit{$p$-group} if the order of each non-identity element is some positive power of a prime $p$. We denote the class of groups with prime power order by $\Gprime$. 
If $p^m$ is the highest power of a prime $p$ such that $p^m$ divides $|G|$, then a subgroup $H \leq G$ such that $|H|=p^m$ is called a \textit{Sylow p-subgroup} of $G$. The \textit{direct product} of two groups $G$ and $H$, denoted by $G\times H$, is the group with elements  $(g,h)$ where  $g \in G$ and $h \in H$ under the group operation $(g_1,h_1)(g_2,h_2)=(g_1g_2,h_1h_2)$, where the co-ordinate wise operations are the group operations of $G$ and $H$ respectively. A finite group is called a \textit{nilpotent group} if it is a direct product of its Sylow subgroups. Moreover, each Sylow subgroup is unique in a finite nilpotent group.

We now give the definition of power graphs (see \cite{cameron2022graphs}). 

\begin{definition}\label{def: power graph}
The power graph of a group $G$, denoted by $Pow(G)$, is an undirected graph with vertex set $G$, and edge set $E=  \{\{x,y\}: y = x^m$ \textrm{ for some integer} m \}.   
\end{definition}

 \begin{remark}\label{remark power graph}
     If $\{x,y\}$ is an edge in $Pow(G)$, then either $o(x)|o(y)$ or $o(y)|o(x)$.
 \end{remark}
 
 We define an equivalence relation $\sim$ on $G$ as follows: for $x,y\in G$, $x\sim y$ if and only if $\langle x \rangle = \langle y \rangle$, i.e., $x$ and $y$ generate the same cyclic subgroup of $G$. We call this equivalence class \textit{generator equivalence} class (in short, \textit{GE-class}). Let us denote the GE-class containing $x$ under $\sim$ by $[x]$. Note that $[x]=gen(\ang{x})$.
 So, all the elements of a GE-class are of the same order. We define the \emph{order of a GE-class} by the order of any element belonging to the class. 
   One can easily notice that the size of a class $[x]$ is $\phi(o(x))$.

\begin{remark}\label{equivalence classes form a clique} In $Pow(G)$ the following two facts hold:
    (\textit{$i$}) Each GE-class $[x]$ of $G$ induces a complete subgraph of $Pow(G)$;
    (\textit{$ii$}) For two GE-classes $[x]$ and $[y]$, if an element $x\in [x]$ is adjacent to an element $y\in [y]$ in $Pow(G)$, then every element of $[x]$ is adjacent to every element of $[y]$. Hence, in this case, it makes sense to say that $[x]$ and $[y]$ are adjacent in the graph $Pow(G)$.\\
    This remark motivates us to formulate the following definition.
\end{remark}

 \begin{definition}\label{adjacency of classes}
   Two distinct GE-classes $[x]$ and $[y]$ are called \textit{adjacent} if $x$ and $y$ are adjacent in $Pow(G)$.
 \end{definition}
In \Cref{ext preli}, we have provided an extended preliminary.

\section{Oriented Diameter of Power Graphs}

We begin the section by stating a necessary and sufficient condition on a finite group for the existence of a strong orientation of the corresponding power graph. The main result of this section is that the oriented diameter of $2$-edge connected power graphs is at most $4$. Fomin, Matamala and Rapaport \cite{fomin2004complexity} proved the following theorem about the oriented diameter of complete graphs, that is required for our further discussion. 

\begin{theorem} {\cite{fomin2004complexity}} \label{extension}
For every $n \geq 3$, $OD(K_n)=2$ except $n = 4$, and $OD(K_4)=3$. Moreover, for every $n \geq 5$, 
every strong orientation of $K_n$ with diameter $2$ can be extended to a strong orientation of $K_{n+1}$ with diameter $2$  and this extension can be constructed in linear time.
\end{theorem}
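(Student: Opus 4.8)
The plan is to separate the two assertions: the exact value of $OD(K_n)$, and the extendability of diameter-$2$ orientations. I would first record the universal lower bound $OD(K_n)\ge 2$ for every $n\ge 3$: a strong orientation of $K_n$ is a tournament, and for any arc $x\to y$ the reverse distance $d(y,x)$ is at least $2$ since there is no arc $y\to x$; hence no orientation has diameter $1$. For the upper bound I would argue by induction on $n$ with the extension step as the engine, seeded by two explicit base cases that cannot be obtained from one another: the directed triangle shows $OD(K_3)=2$, and the rotational tournament on $\ZZ_5$ with $i\to j$ iff $j-i\in\{1,2\}\pmod 5$ is easily checked, using vertex-transitivity, to have diameter $2$, so $OD(K_5)=2$. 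Feeding $K_5$ into the extension step then yields diameter-$2$ orientations for all $n\ge 5$, and together with $K_3$ this gives $OD(K_n)=2$ for every $n\ge 3$ except possibly $n=4$.

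The exceptional value $OD(K_4)=3$ I would handle by a short finite analysis. Since the out-degrees of a tournament on $4$ vertices sum to $6$, some vertex $u$ has out-degree at most $1$; out-degree $0$ violates strong connectivity, and a case check when $u$ has out-degree $1$ forces a pair of vertices at distance $3$. Equivalently, one notes that there is, up to isomorphism, a unique strong tournament on $4$ vertices and its diameter is $3$. This simultaneously shows that no orientation of $K_4$ has diameter $2$, giving $OD(K_4)\ge 3$, and exhibits one of diameter $3$, giving $OD(K_4)\le 3$.

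For the extension step, let $T$ be a diameter-$2$ tournament on $n\ge 5$ vertices and let $v$ be the new vertex; I must orient the $n$ arcs at $v$ so that the resulting tournament on $n+1$ vertices still has diameter $2$. Adding $v$ only creates new directed paths, so all old pairwise distances remain at most $2$, and the entire task is to guarantee $d(v,x)\le 2$ and $d(x,v)\le 2$ for every old vertex $x$. Writing $A=N^+(v)$ and $B=N^-(v)$, the distance-$1$ cases are automatic, and the distance-$2$ requirements reduce exactly to two cut conditions on the bipartition $(A,B)$ of $V(T)$: (i) every $b\in B$ has an in-neighbour in $A$, so that a path $v\to a\to b$ exists; and (ii) every $a\in A$ has an out-neighbour in $B$, so that a path $a\to b\to v$ exists. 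Thus the extension exists precisely when $V(T)$ admits a bipartition with no vertex of $B$ beating all of $A$ and no vertex of $A$ losing to all of $B$.

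Constructing such a bipartition is the crux. Since $T$ has diameter $2$ it is strongly connected, hence Hamiltonian, so I would take a directed Hamiltonian cycle and two-colour its vertices alternately by $A$ and $B$. When $n$ is even this is a proper $2$-colouring of the cycle, so every vertex's cycle-successor and cycle-predecessor lie in the opposite class and conditions (i) and (ii) hold at once. The \emph{main obstacle} is the odd case, where any alternating colouring leaves one monochromatic cycle edge, and hence one potentially offending ``seam'' vertex; sliding the seam around the cycle shows that the construction can fail only for tournaments in which every arc spans an odd cycle-distance, i.e.\ rotational tournaments, and for these one finishes by exhibiting a valid partition directly, for instance by taking $A$ to be two consecutive cycle vertices. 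I would package this as follows: for every $n\ge 5$ a valid bipartition exists, it is read off from a Hamiltonian cycle together with at most $O(n)$ local corrections, and each such step is computable in linear time; orienting $v$ into $A$ and out of $B$ then produces the desired diameter-$2$ orientation of $K_{n+1}$ in linear time.
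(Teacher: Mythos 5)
The paper does not prove this statement: it is imported verbatim from Fomin, Matamala and Rapaport \cite{fomin2004complexity}, so there is no in-paper argument to compare yours against. Judged on its own, your proof is essentially correct. The lower bound, the base cases $K_3$ and $K_5$, the finite analysis of $K_4$, and the reduction of the extension step to the two cut conditions on the bipartition $(A,B)=(N^+(v),N^-(v))$ are all right, as is the even-$n$ case via an alternating $2$-colouring of a Hamiltonian cycle (which exists because a diameter-$2$ tournament is strong, hence Hamiltonian).

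Two points need care. First, the odd-$n$ seam argument is stated very compactly, but it does go through: with the seam placed so that $A=\{v_i,v_{i+2},\dots,v_{i-1}\}$, the only vertex that can violate either condition is $v_{i-1}$, and it violates (ii) precisely when all of its out-neighbours sit at odd cycle-offsets from it; if this happens for all $n$ seam positions, then every arc of $T$ points to an odd offset, which forces $N^+(v_j)$ to be exactly the set of odd-offset vertices for every $j$, and for that one tournament $A=\{v_j,v_{j+1}\}$ works (every other vertex is at odd offset from exactly one of the two, and $v_j\to v_{j+3}$, $v_{j+1}\to v_{j+2}$ land in $B$ once $n\ge 5$). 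Write this out rather than gesture at it, and note that ``i.e.\ rotational tournaments'' is loose: what is forced is the specific circulant relative to the chosen Hamiltonian cycle, and ``two consecutive cycle vertices'' works for that circulant but not for an arbitrary rotational tournament under an arbitrary labelling (e.g.\ it fails for the $\{1,2\}$-circulant on $\ZZ_5$ with its standard cycle). Second, ``linear time'': finding a Hamiltonian cycle in a tournament costs $\Theta(n^2)$ arc probes, so your construction is linear in the size of the given orientation of $K_n$, not in $n$; that is the natural reading of the cited claim, but it is worth saying explicitly.
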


\begin{lemma}\label{power graph 2-edge connected}
    A power graph is $2$-edge connected if and only if the underlying group has no maximal cyclic subgroup of order $2$.
\end{lemma}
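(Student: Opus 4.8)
The plan is to exploit the fact, already noted in the introduction, that the identity $e$ is a dominating vertex of $Pow(G)$, so $Pow(G)$ is always connected. For a connected graph, being $2$-edge connected is equivalent to being bridgeless, and an edge is a bridge precisely when it lies on no cycle. Thus it suffices to prove the two implications: (i) if $G$ has a maximal cyclic subgroup of order $2$, then $Pow(G)$ has a bridge; and (ii) if $G$ has no such subgroup, then every edge of $Pow(G)$ lies on a cycle.

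For (i), I would suppose $\langle x \rangle$ is a maximal cyclic subgroup with $o(x)=2$ and determine the neighbourhood of $x$. By \Cref{remark power graph}, any neighbour $y$ of $x$ satisfies $o(y)\mid o(x)$ or $o(x)\mid o(y)$; unwinding \Cref{def: power graph}, $y$ is a neighbour of $x$ iff $y\in\langle x\rangle$ or $x\in\langle y\rangle$. The first case yields only $y=e$. In the second case $\langle x\rangle\leq\langle y\rangle$, and maximality of $\langle x\rangle$ forces $\langle y\rangle=\langle x\rangle$, again giving $y\in\{e,x\}$. Hence $e$ is the unique neighbour of $x$, so $\{e,x\}$ is a pendant edge and therefore a bridge; consequently $Pow(G)$ is not $2$-edge connected.

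For (ii), assume $G$ has no maximal cyclic subgroup of order $2$, let $\{u,v\}$ be an arbitrary edge, and produce a triangle through it. If neither endpoint is $e$, then since $e$ is adjacent to every vertex, $u$, $v$, $e$ form a triangle. The remaining case is an edge $\{e,u\}$ with $u\neq e$. If $o(u)\geq 3$, then $u^2\in\langle u\rangle$ and $u^2\notin\{e,u\}$, so $u$, $u^2$, $e$ form a triangle. If $o(u)=2$, then $\langle u\rangle$ is a cyclic subgroup of order $2$; by hypothesis it is not maximal, hence is properly contained in some cyclic subgroup $\langle w\rangle$ with $o(w)>2$. Taking $w$ to be a generator of this subgroup gives $w\notin\{e,u\}$, while $u\in\langle w\rangle$ makes $\{u,w\}$ an edge and $e$ is adjacent to both, so $u$, $w$, $e$ form a triangle. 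In every case the edge lies on a cycle, so $Pow(G)$ is bridgeless and therefore $2$-edge connected.

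The bulk of the argument is routine once one observes that $e$ dominates the graph: every edge avoiding $e$ automatically lies on a triangle with $e$, so the only genuine content concerns the edges incident to $e$, i.e.\ the degrees of individual group elements. The one delicate point—and the place where the hypothesis enters—is the order-$2$ elements: an element generating a maximal cyclic subgroup of order $2$ is exactly a degree-$1$ vertex of $Pow(G)$, and recognising this equivalence is the crux. The non-maximality hypothesis is precisely what rescues such an edge $\{e,u\}$, by supplying the strictly larger cyclic subgroup $\langle w\rangle$ that furnishes the third vertex of the cycle.
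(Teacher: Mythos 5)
Your proposal is correct and takes essentially the same approach as the paper: both directions hinge on the observation that the identity dominates $Pow(G)$, so every edge avoiding $e$ lies on a triangle with $e$, and the only possible bridges are pendant edges at generators of maximal cyclic subgroups of order $2$. The only cosmetic difference is that you prove the converse in contrapositive form (no such subgroup implies every edge, including those at $e$, lies on an explicit triangle via $u^2$ or a strictly larger cyclic subgroup), whereas the paper starts from a hypothetical bridge and deduces the maximal order-$2$ subgroup directly.
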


\begin{proof}
    If a group $G$ has a maximal cyclic subgroup $\langle g \rangle$ of order $2$, then $Pow(G)$ has a pendant vertex (a vertex with degree $1$) $g$ adjacent to the identity $e$, i.e., $\{e,g\}$ is a bridge in $Pow(G)$.

    For other direction, let $\{u,v\}$ be a bridge of $Pow(G)$. If none of $u$ and $v$ are identity, then the subgraph induced on $\{u,v,e\}$ forms a cycle, which is a contradiction to the fact $\{u,v\}$ is a bridge in $Pow(G)$. We can assume without loss of generality that $u=e$. If $v$ has a neighbour, say $v'$, in the graph $Pow(G)$, then $\{u=e,v,v'\}$ makes a cycle, and this again leads to the contradiction that $\{u=e,v\}$ is a bridge. So, $e$ is the only neighbour of $v$ in $Pow(G)$. This means $\langle v \rangle=\{v,e\}$ and $v \notin \langle g \rangle$ for any $g \in G \setminus \{e\}$. So, $\langle v \rangle$ is a maximal cyclic subgroup of order $2$.
\end{proof}

According to \Cref{power graph 2-edge connected}, the power graph of $\mathbb{Z}_2$, dihedral group $D_{2n}$  are not $2$-edge connected and hence are not strongly orientable.

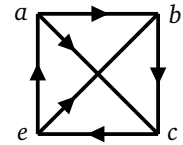
\begin{wrapfigure}{r}{0.55\textwidth}
\vspace{-0.5 cm}
\centering
\begin{tikzpicture}[scale=0.8]
\coordinate (e) at (0,0);
\coordinate (a) at (0,2);
\coordinate (b) at (2,2);
\coordinate (c) at (2,0);

\draw (e) node[left][scale=1] {$e$};
\draw (a) node[left][scale=1] {$a$};
\draw (b) node[right][scale=1] {$b$};
\draw (c) node[right][scale=1] {$c$};

\draw[ line width=0.5mm,black,opacity=1] (e) -- (a)  node[ currarrow, pos=0.5, xscale=1, sloped, scale=1] {};
\draw[ line width=0.5mm,black,opacity=1] (e) -- (b)  node[ currarrow, pos=0.25, xscale=1, sloped, scale=1] {};
\draw[ line width=0.5mm,black,opacity=1] (e) -- (c)  node[ currarrow, pos=0.5, xscale=-1, sloped, scale=1] {};
\draw[ line width=0.5mm,black,opacity=1] (a) -- (b)  node[ currarrow, pos=0.5, xscale=1, sloped, scale=1] {};
\draw[ line width=0.5mm,black,opacity=1] (a) -- (c)  node[ currarrow, pos=0.25, xscale=1, sloped, scale=1] {};
\draw[ line width=0.5mm,black,opacity=1] (b) -- (c)  node[ currarrow, pos=0.5, xscale=1, sloped, scale=1] {};

\end{tikzpicture}
\caption{An orientation of $K_4$ with $ecc(e)=2$}
\label{Fig: Orientation of $K_4$ with $ecc(e)=2$}
\end{wrapfigure} 

\begin{lemma}\label{glued graph}
    Let $X=(V, E)$ be an undirected graph with a dominating vertex $e$. Suppose $V\setminus \{e\}$ can be partitioned into sets $C_1,\ldots, C_m$ such that each induced subgraph $X[C_i]$ is a complete subgraph with at least two vertices, then the oriented diameter of $X$ is at most $4$. 
\end{lemma}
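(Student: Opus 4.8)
The plan is to exploit the fact that each part $C_i$ together with the dominating vertex $e$ induces a complete subgraph $X[C_i \cup \{e\}]$ on at least three vertices. I would orient each of these cliques separately so that $e$ has small eccentricity inside it, and then route every directed path through $e$. Since the cliques $C_i \cup \{e\}$ pairwise intersect only in the vertex $e$, their edge sets are disjoint, so the individual orientations combine into a single \emph{partial} orientation $\OO$ of $X$ without conflict. Any edges of $X$ running between two distinct parts $C_i$ and $C_j$ can simply be left unoriented; by \Cref{obs od is at most diam of partial orient} it then suffices to bound $diam(X_\OO)$.

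Concretely, for each $i$ I would orient $X[C_i \cup \{e\}] \cong K_{|C_i|+1}$ so that $e$ reaches every vertex of $C_i$ within two arcs and is reached from every vertex of $C_i$ within two arcs; that is, so that both the out-eccentricity and the in-eccentricity of $e$ inside this clique are at most $2$. When $|C_i| = 2$ the clique is a triangle, oriented as a directed $3$-cycle of diameter $2$. When $|C_i| \geq 4$ the clique is a $K_n$ with $n \geq 5$, and \Cref{extension} supplies an orientation of diameter $2$, which certainly keeps the eccentricity of $e$ at most $2$.

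The crux, and the only case that is not immediate, is $|C_i| = 3$, where the clique is a $K_4$: here \Cref{extension} gives $OD(K_4) = 3$, so no orientation makes the whole clique have diameter $2$. The key observation is that we do not need diameter $2$ for the clique, only that the single hub $e$ has eccentricity $2$; the orientation in \Cref{Fig: Orientation of $K_4$ with $ecc(e)=2$} achieves exactly this, with $e$ reaching and being reached by the other three vertices in at most two steps. This is precisely why the right invariant to control is the eccentricity of $e$ within each clique rather than the diameter of the clique, and I expect this to be the main (conceptual) obstacle of the argument.

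Finally, with $\OO$ so defined, I would bound distances by factoring every path through $e$. For $u, v \in V \setminus \{e\}$ lying in parts $C_i, C_j$ (possibly $i = j$), the construction gives $d_{X_\OO}(u, e) \leq 2$ and $d_{X_\OO}(e, v) \leq 2$, whence $d_{X_\OO}(u, v) \leq 4$; the cases in which $u$ or $v$ equals $e$ are covered directly by the eccentricity bound and give distance at most $2$. In particular $X_\OO$ is strongly connected, and $diam(X_\OO) \leq 4$, so $OD(X) \leq 4$ by \Cref{obs od is at most diam of partial orient}.
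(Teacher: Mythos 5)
Your proposal is correct and follows essentially the same route as the paper: orient each clique $C_i\cup\{e\}$ separately so that $e$ has eccentricity $2$ (using \Cref{extension} for sizes $\neq 4$ and the special $K_4$ orientation of \Cref{Fig: Orientation of $K_4$ with $ecc(e)=2$} otherwise), leave cross-edges unoriented, and factor all paths through $e$ to get diameter at most $4$. The identification of the $K_4$ case as the one requiring an eccentricity (rather than diameter) argument is exactly the point the paper makes.
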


\noindent \textit{Proof.}  We claim that there is a partial orientation $\OO$ of the given graph $X$ such that the eccentricity  \footnote{The \emph{out-eccentricity} of a vertex $v$ of a directed graph $\XX$ is the maximum distance from $v$ to a vertex $u$ in $\XX$. The \emph{in-eccentricity} of a vertex $v$ of a directed graph $\XX$ is the maximum distance from a vertex $u$ in $\XX$ to $v$. The \emph{eccentricity} of a vertex $v$ of $\XX$ is the maximum of its out-eccentricity and in-eccentricity.} of $e$ in $\XO$ is $2$. This will give us $diam(\XO) \leq 4$, which in turn will imply that $OD(X) \leq 4$ (due to \Cref{obs od is at most diam of partial orient}). 
     Therefore, it is enough to give a partial orientation of each induced subgraph $X[C_i \cup \{ e \}]$,  such that the vertex $e$ has eccentricity 2 in the oriented subgraph $X[C_i \cup \{ e \}]$.

    We observe that for each $i$, $C_i\cup\{e\}$ induces a complete subgraph of $X$ of size at least 3. If $X[C_i \cup \{ e \}]$ is a complete subgraph of size $n \neq 4$, then by \Cref{extension}, we can orient the subgraph with diameter 2. In particular, $e$ has eccentricity 2 with this orientation. 

    Otherwise, if $C_i=\{ a,b,c \}$ then we can give an orientation to the induced subgraph $X[C_i \cup \{ e \}]$ (as shown in \Cref{Fig: Orientation of $K_4$ with $ecc(e)=2$}) with  $e$  having eccentricity 2 in the oriented subgraph $X[C_i \cup \{ e\}]$. \hfill$\square$

\begin{theorem}\label{4 upper bound}
    The oriented diameter of $Pow(G)$ is at most $4$, where $G$ is a finite group with no maximal cyclic subgroup of order $2$.
\end{theorem}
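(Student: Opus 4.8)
The plan is to reduce the theorem directly to \Cref{glued graph}, which already does all of the orientation work: it suffices to show that $Pow(G)$ has a dominating vertex $e$ and that $V(Pow(G))\setminus\{e\}$ can be partitioned into cliques each of size at least $2$. The identity $e$ is a dominating vertex because $e=g^{o(g)}$ is a power of every $g\in G$, so $\{e,g\}\in E(Pow(G))$ for all $g\neq e$. Thus the entire task is to produce the required clique partition of $G\setminus\{e\}$.

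The natural candidate is the collection of GE-classes: by \Cref{equivalence classes form a clique} each GE-class $[x]=gen(\langle x\rangle)$ induces a complete subgraph, and these classes partition $G\setminus\{e\}$. Since $|[x]|=\phi(o(x))$, every class of order at least $3$ already has size at least $2$. The only obstruction — and the sole place the hypothesis is needed — is that each element $x$ of order $2$ forms a singleton GE-class, so these singletons must be absorbed into larger cliques before \Cref{glued graph} can be applied.

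This is exactly where I would invoke that $G$ has no maximal cyclic subgroup of order $2$. For an involution $x$, the hypothesis yields a maximal cyclic subgroup $C\supsetneq\langle x\rangle$ with $|C|>2$. First I would check that $x$ is adjacent to every generator of $C$: if $c'\in gen(C)$ then $\langle c'\rangle=C\ni x$, so $x$ is a power of $c'$ and $\{x,c'\}$ is an edge; as $gen(C)$ is itself a clique, $gen(C)\cup\{x\}$ is a clique of size $\phi(|C|)+1\ge 3$. I would then merge each involution $x$ into such a class $gen(C_x)$. The crucial consistency point is injectivity: by \Cref{gtf_CLT} a cyclic group of even order has a unique element of order $2$, so no maximal cyclic subgroup can contain two distinct involutions; hence distinct involutions force distinct $C_x$ and distinct generator classes, and each enlarged class receives at most one involution while remaining a clique. (Note the merge targets have order $\ge 3$ and hence always exist, since if every non-identity element had order $2$ then every $\langle x\rangle$ would be a maximal cyclic subgroup of order $2$.)

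After absorbing all singletons, $G\setminus\{e\}$ is partitioned into cliques of size at least $2$ — the untouched GE-classes of order $\ge 3$ together with the enlarged classes $gen(C_x)\cup\{x\}$ — so \Cref{glued graph} gives $OD(Pow(G))\le 4$. The hard part is precisely the bookkeeping for the order-$2$ elements: verifying that an involution is adjacent to every generator of a maximal cyclic subgroup containing it, and that uniqueness of involutions in cyclic groups makes the merging injective. Everything else is a direct appeal to the dominating-vertex structure of $Pow(G)$ and to \Cref{glued graph}.
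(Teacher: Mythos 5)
Your proposal is correct and follows essentially the same route as the paper: partition $G\setminus\{e\}$ into GE-classes of elements of order greater than $2$, absorb each involution into the generator class of a larger cyclic subgroup containing it (using the uniqueness of the order-$2$ element in a cyclic group, \Cref{gtf_CLT}, to make the absorption well-defined and one-to-one), and then invoke \Cref{glued graph}. The only cosmetic difference is that you route each involution through a \emph{maximal} cyclic subgroup containing it, whereas the paper uses any element of order greater than $2$ that generates it; the argument is otherwise identical.
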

\begin{proof}
Let $S=G\setminus\{e\}$. Our idea is to partition $S$ into sets $C_1,\dots, C_m$ such that the condition of \Cref{glued graph} is satisfied.
To construct $C_1$, we pick a vertex $g \in S$ such that $o(g) >2$. Such a vertex exists as $G$ does not have any maximal cyclic subgroup of order $2$. Let $C_1=[g]$. Inductively, assume that we have constructed $C_1, \dots, C_l$. We pick a vertex $g$ in $S \setminus (\cup_{i=1}^{l}C_i)$ such that $o(g)>2$. The process ends if there is no such element. Otherwise, let $C_{l+1}=[g]$. 

Let  $C_1, \dots, C_m$ be the sets created at the end of the process. If $S \setminus (C_1 \cup \dots \cup C_m)$ is non-empty, it consists of elements of order $2$ only. Let $y \in S \setminus (C_1 \cup \dots \cup C_m) $. Since $\langle y \rangle$ is not a maximal cyclic subgroup, $y$ must be generated by some element $g$ of order more than $2$.  Let $g \in C_i$. Note that no other element $y' \in S \setminus (C_1 \cup \dots \cup C_m)$ can be generated by any element in $C_i$. Otherwise, it implies that $\langle g \rangle$ contains two elements of order $2$, which contradicts \Cref{gtf_CLT}. Now, as $g$ generates $y$,  the GE-class $[g]$ is adjacent to $[y]=\{y\}$ (by \Cref{adjacency of classes}). Hence, by \Cref{equivalence classes form a clique}, $C_i \cup \{y\}$ induces a clique. We update $C_i$ by $C_i \cup \{y\}$. Thus, each $y \in S \setminus (C_1 \cup \dots \cup C_m)$ can be merged to a unique $C_j$. Now, we apply \Cref{glued graph} to conclude that $OD(Pow(G))\leq 4$. Note that if $S \setminus (C_1 \cup \dots \cup C_m)$ is empty, then we can directly apply \Cref{glued graph} to obtain the result.\end{proof}
\section{Oriented Diameter of Power Graphs of Cyclic Groups}\label{section: cyclic}

Each cyclic group of order $n$ is isomorphic to $\ZZ_n$, where  $\ZZ_n$ is the additive group of integers modulo $n$. 
 \Cref{dom of a power graph} tells that when $n \geq 3$, $Pow(\ZZ_n)$ has at least two dominating vertices. Using this, we prove that $Pow(\ZZ_n)$, where $n \geq 3$, can be given a partial orientation of diameter $3$ (see \Cref{Upper3}). 
 
  \captionsetup{font=small}
\begin{wrapfigure}{r}{0.63\textwidth}

\centering
\begin{tikzpicture}[scale=0.35]
                

  \filldraw[color=black!60, fill=black!5, very thick][rounded corners] (6,13) rectangle (16,15.5);
  \draw (11,17) node[below][scale=1] {$\ZZ_{n} \setminus \{d_1,d_2\}$};
  \filldraw [black] (10,14) circle(2pt);
  \draw (10,14) node[above][scale=1] {$u$};
  \filldraw [black] (12,14) circle(2pt);
  \draw (12,14) node[above][scale=1] {$v$};
 

 \filldraw [black] (9,11) circle(4pt);
 \draw (8.9,11) node[left][scale=1] {$d_1$};
 
 \filldraw [black] (13,11) circle(4pt);
 \draw (13.1,11) node[right][scale=1] {$d_2$};

\draw (9,11) -- (13,11) node[ currarrow, pos=0.5, xscale=1, sloped, scale=1] {} ;
\draw (10,14) -- (13,11) node[ currarrow, pos=0.5, xscale=-1, sloped, scale=1] {} ;
\draw (10,14) -- (9,11) node[ currarrow, pos=0.5, xscale=-1, sloped, scale=1] {} ;
\draw (9,11) -- (12,14) node[ currarrow, pos=0.5, xscale=-1, sloped, scale=1] {} ;
\draw (13,11) -- (12,14) node[ currarrow, pos=0.5, xscale=-1, sloped, scale=1] {} ;

                \end{tikzpicture}

\caption{A partial orientation of $Pow(\ZZ_n)$ with diameter $3$.} 
\label{fig: upper3}
\end{wrapfigure}
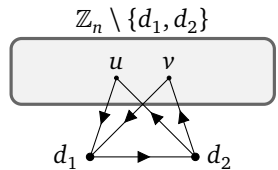
    
\begin{lemma}\label{dom of a power graph}\cite{cameron2010power, chakrabarty2009undirected}
    Let $G$ be a cyclic group. Then, the set of dominating vertices consists of all elements in $G$ if $G$ is of prime power order; otherwise, the set of dominating vertices is $gen(G)\cup\{e\}$.
\end{lemma}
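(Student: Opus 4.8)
The plan is to prove \Cref{dom of a power graph}, which characterizes the dominating vertices of $Pow(G)$ for a cyclic group $G$. Recall a vertex is dominating if it is adjacent to every other vertex. I would split the argument into the two cases asserted in the statement.

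\textbf{Prime power order case.} First I would treat the case where $G = \langle g \rangle$ has order $p^k$ for a prime $p$. Here I claim every vertex is dominating, i.e.\ $Pow(G)$ is complete. Take any two elements $x, y \in G$ with orders $o(x) = p^a$ and $o(y) = p^b$; say $a \le b$. By \Cref{gtf_CLT}, $G$ has a unique subgroup of order $p^a$, and since $\langle x \rangle$ has order $p^a$ while $\langle y \rangle$ (of order $p^b \ge p^a$) contains a unique subgroup of order $p^a$, that subgroup must equal $\langle x \rangle$. Hence $x \in \langle y \rangle$, so $x$ is a power of $y$ and $\{x,y\}$ is an edge. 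As $x,y$ were arbitrary, every vertex is adjacent to every other, so every vertex dominates.

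\textbf{General (non--prime-power) case.} Now suppose $|G| = n$ is not a prime power. I would show the dominating set is exactly $gen(G) \cup \{e\}$ by proving both inclusions. For the easy direction, $e$ is adjacent to everything since $e = x^{o(x)}$ for every $x$, and any generator $g$ of $G$ is adjacent to every $x \in G$ because $x \in G = \langle g \rangle$ means $x$ is a power of $g$; thus $gen(G) \cup \{e\} \subseteq$ the dominating set. The substantive direction is showing no other vertex dominates. Let $x \notin gen(G) \cup \{e\}$, so $o(x) = d$ with $1 < d < n$ and $d \mid n$. I want to exhibit a non-neighbour of $x$. Since $n$ is not a prime power, write $n = p^{\al} m$ with $p \nmid m$ and $m > 1$; the idea is to find an element $y$ whose order shares divisibility obstructions with $d$, so that by \Cref{remark power graph} neither $o(x) \mid o(y)$ nor $o(y) \mid o(x)$, forcing $\{x,y\}$ to be a non-edge. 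Concretely, because $d < n$ is a proper divisor, there is a prime $q \mid n$ with the $q$-part of $d$ strictly smaller than the $q$-part of $n$, and since $n$ has at least two prime factors one can pick an element $y$ of order $n/q'$ for a suitable prime $q'$ so that $o(x) = d$ and $o(y)$ are incomparable under divisibility.

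\textbf{Where the work concentrates.} The main obstacle is the last step: constructing, for an arbitrary proper non-generator $x$ of order $d$, an explicit element $y$ whose order is incomparable to $d$ under divisibility. By \Cref{remark power graph}, an edge $\{x,y\}$ forces $o(x) \mid o(y)$ or $o(y) \mid o(x)$, so producing $y$ with $d \nmid o(y)$ and $o(y) \nmid d$ immediately gives a non-neighbour and shows $x$ is not dominating. The cleanest route uses that $n$ has two distinct prime divisors $p, r$: if $p \nmid d$ choose $y$ of order $pd'$ for an appropriate divisor $d'$ of $d$ so that $o(y)$ picks up the prime $p$ that $d$ lacks while dropping some prime power that $d$ carries; the existence of such $y$ is guaranteed by \Cref{gtf_CLT}, which supplies an element of each order dividing $n$. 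Verifying in each sub-case that the chosen $y$ is genuinely incomparable to $d$ is the routine but careful part of the argument; once that incomparability is established, \Cref{remark power graph} closes the proof and we conclude the dominating set is precisely $gen(G) \cup \{e\}$.
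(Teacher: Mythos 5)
The paper does not actually prove this lemma---it is imported from the cited references---so there is no in-paper argument to compare against; I am judging your proposal on its own. Your overall strategy is sound: the prime-power case via uniqueness of subgroups of each order (\Cref{gtf_CLT}) is correct and complete, the inclusion $gen(G)\cup\{e\}\subseteq\{\text{dominating vertices}\}$ is fine, and reducing the converse to exhibiting, for each non-generator $x\neq e$ of order $d$, an element $y$ with $d\nmid o(y)$ and $o(y)\nmid d$ is exactly what is needed (by \Cref{cyclic remark}, in a cyclic group this incomparability is equivalent to non-adjacency, not merely implied by it).

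The gap sits in the one place you dismiss as routine: neither of the two concrete recipes you give for $y$ works in all cases. Take $n=36=2^2\cdot 3^2$ and $d=6$. Every prime dividing $n$ divides $d$, so the recipe ``choose $y$ of order $pd'$ for a prime $p\nmid d$'' has no prime to use; and $6$ divides both $36/2=18$ and $36/3=12$, so an element of order $n/q'$ is adjacent to $x$ for every choice of the prime $q'$. The lemma is still true there (an element of order $4$ is a non-neighbour), but your constructions miss it. A clean general construction: write $n=p_1^{a_1}\cdots p_k^{a_k}$ and $d=p_1^{b_1}\cdots p_k^{b_k}$ with $0\le b_i\le a_i$. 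Since $d\neq n$ some $b_i<a_i$, and since $d\neq 1$ some $b_j\ge 1$; because $k\ge 2$ one can always choose such indices with $i\neq j$ (if the only index in each set were the same single index $i_0$, then every $l\neq i_0$ would have $b_l=a_l$ and $b_l=0$, forcing $a_l=0$, contradicting $k\ge 2$). Then $y$ of order $p_i^{a_i}$, which exists by \Cref{gtf_CLT}, satisfies $o(y)\nmid d$ since $b_i<a_i$, and $d\nmid o(y)$ since $p_j\mid d$ but $p_j\nmid o(y)$. With this replacement your argument closes correctly.
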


\begin{lemma}
\label{Upper3}
    The oriented diameter of $Pow(\mathbb{Z}_n)$ is at most $3$, where $n \geq 3$.
\end{lemma}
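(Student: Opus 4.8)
The plan is to use the dominating vertices promised by \Cref{dom of a power graph} to build an explicit partial orientation of $Pow(\ZZ_n)$ whose directed diameter is $3$, and then appeal to \Cref{obs od is at most diam of partial orient} to conclude $OD(Pow(\ZZ_n)) \le 3$. Since $n \ge 3$, \Cref{dom of a power graph} guarantees at least two dominating vertices; I fix two of them, say $d_1$ and $d_2$. The only role the hypothesis $n \ge 3$ plays is to ensure this pair exists (for $n = 1, 2$ it fails), so this is exactly where the excluded small cases would enter.

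The orientation I have in mind is the one sketched in \Cref{fig: upper3}. Orient the edge joining the two hubs as $d_1 \to d_2$. For every remaining vertex $w \in \ZZ_n \setminus \{d_1, d_2\}$, orient its two edges to the hubs as $d_2 \to w$ and $w \to d_1$, leaving all other edges of $Pow(\ZZ_n)$ unoriented. Because $d_1$ and $d_2$ are dominating, each is adjacent to $w$ and to the other hub, so every edge I am asking to orient genuinely exists; hence this is a well-defined partial orientation $\OO$.

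Next I would check that in $\XO$ every ordered pair of vertices is joined by a directed path of length at most $3$. The two hubs are handled by $d_1 \to d_2$ (length $1$) and $d_2 \to w \to d_1$ for any $w$ (length $2$, and such a $w$ exists whenever $n \ge 3$). For a hub paired with a non-hub, $d_2 \to w$ and $w \to d_1$ give length $1$, while $d_1 \to d_2 \to w$ and $w \to d_1 \to d_2$ give length $2$. The only remaining case is an ordered pair of distinct non-hubs $w, w'$, routed as $w \to d_1 \to d_2 \to w'$, of length $3$. Taking the maximum over all pairs yields $diam(\XO) = 3$, and \Cref{obs od is at most diam of partial orient} then gives $OD(Pow(\ZZ_n)) \le 3$.

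I do not expect a genuine obstacle here: the construction is direct, and the only points requiring care are the existence of the two hubs (the role of $n \ge 3$) and the observation that the binding case is a pair of non-hub vertices, realized at distance exactly $3$. This also makes transparent why this particular construction only certifies the bound $3$ rather than $2$, which is what motivates the more delicate layered argument developed later to decide precisely when $OD(Pow(\ZZ_n))$ actually equals $2$.
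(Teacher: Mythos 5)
Your proposal is correct and follows essentially the same construction as the paper: orient $(d_1,d_2)$ between two dominating vertices and place every other vertex $w$ in the directed $3$-cycle $w \to d_1 \to d_2 \to w$, then invoke \Cref{obs od is at most diam of partial orient}. Your case analysis just spells out in more detail the distance check that the paper leaves implicit.
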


\noindent \textit{Proof.} By \Cref{dom of a power graph}, $Pow(\mathbb{Z}_n)$ has at least two dominating vertices since $\mathbb{Z}_n$ has $\phi(n)\geq 2$ generators. Let $d_1$ and $d_2$ be two such dominating vertices. First we orient the edge $\{d_1,d_2\}$ as $(d_1,d_2)$. Then for any vertex $u \in \mathbb{Z}_n \setminus \{d_1,d_2\}$, we orient the edges $\{u,d_1\}$, $\{u,d_2\}$ as $(u,d_1)$ and $(d_2,u)$ respectively, forcing  $u,d_1,d_2$ to form a directed cycle (see \Cref{fig: upper3}). This gives an orientation with diameter $3$.
\hfill$\square$

\vspace{0.35cm}

Now, we present the main result on the oriented diameter of the power graphs of cyclic groups.

\begin{theorem}\label{cyclic_main}
  $
  \text{The oriented diameter of }Pow(\ZZ_n) =
    \begin{cases}
      0 & \text{if $n=1$}\\
      \infty & \text{if $n=2$}\\
      3 & \text{if $n=4,6$}\\
      2 & \text{otherwise}
    \end{cases}       
   $
\end{theorem}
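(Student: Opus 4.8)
The plan is to handle the small cases first and then concentrate on the main claim, namely that $OD(Pow(\ZZ_n)) = 2$ for all $n \neq 1, 2, 4, 6$. The cases $n = 1$ (single vertex, oriented diameter $0$ by convention) and $n = 2$ (a bridge, so no strong orientation exists by \Cref{power graph 2-edge connected}, giving $\infty$) are immediate. For $n = 4$ and $n = 6$, I would argue separately that oriented diameter $2$ is impossible, forcing the value to be $3$: by \Cref{Upper3} we already know $OD \leq 3$ whenever $n \geq 3$, so the task reduces to ruling out $2$. Since $Pow(\ZZ_4) = K_4$, the lower bound of $3$ follows directly from \Cref{extension} ($OD(K_4) = 3$). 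For $n = 6$, I would inspect the structure of $Pow(\ZZ_6)$ and exhibit, for every candidate orientation, a pair of vertices at directed distance exceeding $2$, perhaps by a counting/degree argument on the two generators (which are the only non-identity dominating vertices) together with the order-$2$ and order-$3$ elements.

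The heart of the theorem is showing $OD(Pow(\ZZ_n)) = 2$ for the remaining $n$. Since \Cref{Upper3} gives $OD \leq 3$, I need a matching construction achieving diameter exactly $2$ (the value cannot drop below $2$ unless the graph is complete and small, which is handled by \Cref{extension}). When $n$ is a prime power, $Pow(\ZZ_n) = K_n$ is complete, so by \Cref{extension} the oriented diameter is $2$ for all such $n \geq 3$ with $n \neq 4$; this disposes of the prime-power case cleanly. The genuinely hard case is when $n$ has at least two distinct prime factors, where the power graph is not complete and one must build a diameter-$2$ orientation by hand. The introduction signals the intended method: a ``decomposition into layers'' using the subgroup lattice (\Cref{gtf_CLT} guarantees a unique cyclic subgroup of each order), combined with an inductive construction that uses a $P_4$-gadget and the ``tool for incrementing'' referenced as \Cref{cyclic: tool for incrementing}. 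Concretely, I would orient the edges among the dominating vertices $gen(\ZZ_n) \cup \{e\}$ (which form a clique, hence orientable with diameter $2$ by \Cref{extension} once this set is large enough) so that every such vertex can reach and be reached from every other in at most two steps, and then attach each remaining GE-class to this dominating core via the gadget, checking that each added class stays within distance $2$ both ways.

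The main obstacle I anticipate is the inductive step for the non-prime-power case: guaranteeing that when a new ``layer'' (a GE-class of some new order, or the elements tied to a larger divisor) is oriented and spliced onto the already-oriented portion, no pair of vertices is pushed to distance $3$. The difficulty is that two GE-classes are adjacent in $Pow(\ZZ_n)$ only when one of the corresponding orders divides the other (\Cref{remark power graph}), so the adjacency pattern is exactly the divisibility lattice on divisors of $n$; vertices in incomparable classes are non-adjacent and must be connected through a common dominating vertex or through a carefully routed length-$2$ directed path. Verifying that the gadget placements supply such a length-$2$ path in \emph{both} directions for every incomparable pair, uniformly across all $n$ with at least two prime factors (including the degenerate small cases that must be excluded, which is presumably how $n = 6$ gets separated out), is where the careful case analysis and the group-theoretic input from \Cref{cyclic: tool for incrementing} will be essential. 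I would also need to confirm that the base case of the induction is large enough that \Cref{extension} applies (i.e.\ the relevant clique has size at least $5$, or handle the small exceptions directly), since that theorem fails precisely at $K_4$.
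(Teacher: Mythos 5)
Your outer skeleton matches the paper exactly: $n=1,2$ are immediate, $n=4$ follows from $OD(K_4)=3$, $n=6$ is excluded by a direct case analysis, prime powers are handled by \Cref{dom of a power graph} and \Cref{extension}, and the composite case is built inductively with $P_4$-gadgets and \Cref{cyclic: tool for incrementing}. But the concrete construction you sketch for the composite case --- orient the dominating core $gen(\ZZ_n)\cup\{e\}$ with diameter $2$ and then ``attach each remaining GE-class to this dominating core via the gadget'' --- has a genuine gap, and it is precisely the gap you flag as your ``main obstacle.'' Routing every incomparable pair through the core cannot work: once you fix the orientation of the edges between an outside vertex $u$ and the core, you would need, for \emph{every} other outside vertex $v$, some core vertex that is simultaneously an out-neighbour of $u$ and an in-neighbour of $v$, and a single gadget-based attachment scheme cannot deliver this for all ordered pairs of outside vertices at once. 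The paper's \Cref{general strategy for od 2} avoids this by splitting the non-core vertices into \emph{two} layers $L_T$ and $L_B$, each of which is required to have length-$2$ directed paths \emph{internally} (conditions (a) and (c)); the gadget in $L_M$ only mediates between $L_T$ and $L_B$ and between the core and the outside. Supplying condition (c) is the real work: in \Cref{cyclic: tool for incrementing} the new layer $L_B$ inherits an orientation by projecting onto $Pow(H)$, and the degenerate fibres $\{u\}\times[g_j]$ with $|[g_j]|\in\{2,4\}$ must be patched using triangles through the identity. None of this is visible in your plan.

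A second concrete omission: your induction treats all prime factors uniformly, but \Cref{cyclic: tool for incrementing} requires $p\neq 2$ (extending by a power of $2$ breaks the argument because $\phi(2)=1$ makes the generator class $[g_j]$ too small to serve as $L_M$, and the parity/size constraints $|L_M|$ even and $|L_M|\geq 4$ can fail). This is why the paper needs the separate \Cref{cyclic: primes 2 and q} to absorb the full factor $2^{\alpha}q^{\beta}$ into the base case of the recursion, with its own induction on the power of $q$ and two distinct base cases (one of which is exactly what forces $n=6$ out as an exception). Your plan would need to discover and prove an analogue of that lemma before the induction could start whenever $n$ is even.
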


The proof of \Cref{cyclic_main} requires a sequence of lemmata. The directed path of length $3$ plays an important role in those lemmata. Hence,  we call it a \emph{`$P_4$-gadget'} (the gadget is shown in \Cref{Fig:gadget}).

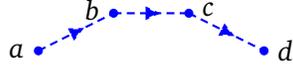
\begin{figure}[hpt!]
\centering
        \begin{tikzpicture}[scale=0.5]
        
        \coordinate (a) at (2,5.5);
        \coordinate (b) at (4,6.5);
        \coordinate (c) at (6,6.5);
        \coordinate (d) at (8,5.5);


        
        \draw (1.9,5.5) node[left][scale=1] {$a$};
        \draw (3.9,6.6) node[left][scale=1] {$b$};
        \draw (6.1,6.6) node[right][scale=1] {$c$};
        \draw (8.1,5.5) node[right][scale=1] {$d$};

        \draw[blue, densely dashed, thick] (a) -- (b) node[ currarrow, pos=0.5, xscale=1, sloped, scale=1, color=blue] {} ;
        \draw[blue, densely dashed, thick] (b) -- (c) node[ currarrow, pos=0.5, xscale=1, sloped, scale=1, color=blue] {} ;
        \draw[blue, densely dashed, thick] (c) -- (d) node[ currarrow, pos=0.5, xscale=1, sloped, scale=1, color=blue] {} ;

        \filldraw [blue] (a) circle(3pt);
        \filldraw [blue] (b) circle(3pt);
        \filldraw [blue] (c) circle(3pt);
        \filldraw [blue] (d) circle(3pt);

         \end{tikzpicture}
        \caption{The $P_4$-gadget}
         \label{Fig:gadget}
\end{figure}

\begin{lemma}\label{kn-lemma}
    For $n\geq 4$, there exists an optimal orientation of $K_n$ having a $P_4$-gadget as a subgraph.
\end{lemma}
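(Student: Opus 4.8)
### Proof Proposal for Lemma~\ref{kn-lemma}

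The plan is to establish the statement by a short induction on $n$, leaning on the extension mechanism guaranteed by \Cref{extension}. First I would dispose of the small base cases by hand. For $n=4$ the relevant optimal orientation has diameter $3$ (since $OD(K_4)=3$ by \Cref{extension}); I would exhibit an explicit strong orientation of $K_4$ that simultaneously achieves diameter $3$ and contains a directed path $abcd$ on all four vertices, which \emph{is} a $P_4$-gadget. For $n=5$ I would likewise write down an explicit diameter-$2$ strong orientation of $K_5$ in which some four vertices carry a directed $P_4$; concretely, take the vertices as $\ZZ_5$ and orient $i \to j$ whenever $j-i \in \{1,2\} \pmod 5$, which is the standard diameter-$2$ tournament on $K_5$ and plainly contains the directed path $0\,{\to}\,1\,{\to}\,2\,{\to}\,3$.

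For the inductive step, suppose $n \geq 5$ and that $K_n$ admits an optimal (diameter-$2$) orientation $\OO$ containing a $P_4$-gadget on vertices $a,b,c,d$. By the ``moreover'' clause of \Cref{extension}, any strong orientation of $K_n$ with diameter $2$ can be extended to a strong orientation of $K_{n+1}$ with diameter $2$. Applying this to $\OO$ yields an optimal orientation of $K_{n+1}$; crucially, the extension only \emph{adds} the new vertex together with its incident oriented edges and does not reverse or delete any edge among the original $n$ vertices, so the directed path $a \to b \to c \to d$ survives intact in $K_{n+1}$. Hence the extended orientation is again optimal and still contains a $P_4$-gadget, closing the induction for all $n \geq 5$.

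The only genuinely delicate point is the transition at $n=4\to 5$, because the notion of ``optimal'' changes value there: $OD(K_4)=3$ while $OD(K_n)=2$ for $n\geq 5$. Consequently the inductive extension argument, which is stated for diameter-$2$ orientations, cannot be bootstrapped from the $n=4$ case; I must supply the $n=5$ case as a separate, self-contained base case rather than deriving it from $n=4$. I expect this to be the main obstacle, and I would handle it simply by giving the explicit $K_5$ orientation above and verifying directly that it has diameter $2$ and contains a directed $P_4$. Everything else is routine: the $n=4$ gadget is immediate from \Cref{Fig: Orientation of $K_4$ with $ecc(e)=2$}-style orientations, and the induction for $n\geq 5$ is a direct invocation of the extension guarantee in \Cref{extension}, with the observation that extension preserves previously oriented edges.
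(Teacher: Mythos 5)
Your proposal is correct and follows essentially the same route as the paper: exhibit explicit optimal orientations of $K_4$ and $K_5$ containing a directed $P_4$, then invoke the extension clause of \Cref{extension} (which only adds edges incident to the new vertex, so the gadget survives) to reach all $n\geq 6$. Your explicit circulant orientation of $K_5$ and your remark that the $n=4$ case cannot be bootstrapped into the induction are both sound and match the paper's treatment in substance.
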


\begin{proof} \Cref{Fig:strong-k4} shows an  optimal orientation of $K_4$ with a $P_4$-gadget. Figure~\ref{Fig:strong-k5} shows an optimal orientation of $K_5$ with a $P_4$-gadget. In both the figures, the subgraph formed by the directed edges $(a,b), (b, c)$, and $(c, d)$ gives the required $P_4$-gadget (marked in blue). By Theorem~\ref{extension}, this orientation of $K_5$ can be extended to obtain an orientation of $K_n$ with diameter $2$ for $n\geq 6$.\end{proof} 

\begin{figure}[hpt!]
\begin{subfigure}[b]{0.5\textwidth}
\centering
\hspace{0.9cm}
        \begin{tikzpicture}[scale=0.5]
        \coordinate (a) at (2,4.5);
        \coordinate (b) at (4,6.5);
        \coordinate (c) at (6,6.5);
        \coordinate (d) at (8,4.5);

        \draw (1.9,4.5) node[left][scale=1] {$a$};
        \draw (3.9,6.6) node[left][scale=1] {$b$};
        \draw (6.1,6.6) node[right][scale=1] {$c$};
        \draw (8.1,4.5) node[right][scale=1] {$d$};

        \draw[blue, densely dashed, thick] (a) -- (b) node[ currarrow, pos=0.5, xscale=1, sloped, scale=1.3, color=blue] {} ;
        \draw[blue, densely dashed, thick] (b) -- (c) node[ currarrow, pos=0.5, xscale=1, sloped, scale=1.3, color=blue] {} ;
        \draw[blue, densely dashed, thick] (c) -- (d) node[ currarrow, pos=0.5, xscale=1, sloped, scale=1.3, color=blue] {} ;
        
        \draw[black, thick] (a) -- (d) node[ currarrow, pos=0.5, xscale=-1, sloped, scale=1, color=black] {} ;
        \draw[black, thick] (b) -- (d) node[ currarrow, pos=0.75, xscale=-1, sloped, scale=1, color=black] {} ;
        \draw[black, thick] (a) -- (c) node[ currarrow, pos=0.35, xscale=-1, sloped, scale=1, color=black] {} ;

         \filldraw [blue] (a) circle(4pt);
         \filldraw [blue] (b) circle(4pt);
         \filldraw [blue] (c) circle(4pt);
         \filldraw [blue] (d) circle(4pt);

        \end{tikzpicture}
\caption{An orientation of $K_4$ with diameter $3$.}
\label{Fig:strong-k4}
\end{subfigure} \hspace{ 1 cm}
\begin{subfigure}[b]{0.4\textwidth}
\centering
       \begin{tikzpicture}[scale=0.6]

\coordinate (p) at (2,6);
\coordinate (q) at (4,4);
\coordinate (r) at (0,4); 
\coordinate (s) at (0.75,2);
\coordinate (t) at (3.25,2);

\draw (p) node[above] {$c$};
\draw (q) node[right]  {$e$};
\draw (r) node[left] {$b$};    
\draw (s) node[left] {$a$};
\draw (t) node[right] {$d$};   


\draw[black,thick] (p) -- (q)  node[ currarrow, pos=0.5, xscale=-1, sloped, scale=1] {};

\draw[blue,densely dashed, thick] (p) -- (r)  node[ currarrow, pos=0.5, xscale=1, sloped, scale=1.3, color=blue] {};

\draw[blue,densely dashed, thick] (s) -- (r)  node[ currarrow, pos=0.5, xscale=-1, sloped, scale=1.3, color=blue] {};

\draw[black,thick] (r) -- (q)  node[ currarrow, pos=0.5, xscale=1, sloped, scale=1] {};

\draw[black,thick] (p) -- (s)  node[ currarrow, pos=0.35, xscale=-1, sloped, scale=1] {};

\draw[blue,densely dashed, thick] (p) -- (t)  node[ currarrow, pos=0.35, xscale=1, sloped, scale=1.3, color=blue] {};

\draw[black,thick] (r) -- (t)  node[ currarrow, pos=0.5, xscale=1, sloped, scale=1] {};

\draw[black,thick] (q) -- (s)  node[ currarrow, pos=0.5, xscale=-1, sloped, scale=1] {};

\draw[black,thick] (t) -- (q)  node[ currarrow, pos=0.5, xscale=1, sloped, scale=1] {};

\draw[black,thick] (s) -- (t)  node[ currarrow, pos=0.5, xscale=-1, sloped, scale=1] {};
        \filldraw [blue] (p) circle(4pt);
        \filldraw [black] (q) circle(4pt);
        \filldraw [blue] (r) circle(4pt);
        \filldraw [blue] (s) circle(4pt);
        \filldraw [blue] (t) circle(4pt);

\end{tikzpicture}

\caption{An orientation of $K_5$ with diameter $2$.}
\label{Fig:strong-k5} 
\end{subfigure}

\caption{}
\label{Fig: k4,k5 with gadget}
\end{figure}

\begin{lemma}\label{general strategy for od 2}
    Let $X=(V, E)$ be an undirected graph such that $V=L_T \sqcup L_M \sqcup L_B$ (where $\sqcup$  denotes disjoint union) and the following properties hold: (a) There is a partial orientation of $X[L_T]$ with diameter at most $2$; (b) $|L_M|$ is even, $|L_M|\geq 4$, and $L_M$ is a set of dominating vertices of $X$; (c) There is a partial orientation $\OO_B$ of the edges of $X[L_B]$ and the edges in $E(L_T, L_B)$ such that there is a directed path of length at most $2$ between any two vertices $a,b \in L_B$ using only the directed edges in $\OO_B$. Then the oriented diameter of $X$ is $2$.
\end{lemma}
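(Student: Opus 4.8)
The plan is to leave the two supplied partial orientations untouched and to spend all the remaining freedom on the edges touching $L_M$. Concretely, I would keep $\OO_T$, the diameter-$\le 2$ orientation of $X[L_T]$ promised by (a), and $\OO_B$, the orientation of $X[L_B]$ and $E(L_T,L_B)$ promised by (c); together these already place every pair of vertices lying inside $L_T$, and every pair lying inside $L_B$, at directed distance at most $2$. The edges still free are exactly those incident to $L_M$ — the complete graph on $L_M$ together with $E(L_M,L_T)$ and $E(L_M,L_B)$ — and all of them are present because $L_M$ is a dominating set. Using that $|L_M|$ is even and at least $4$, I would first split $L_M$ into two halves $M_1 \sqcup M_2$ of equal size $k \ge 2$, to be used as ``gateways'' into the dominating core.

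Next I would orient the external-to-$L_M$ edges asymmetrically by layer: for each $t \in L_T$ put $t \to m$ for all $m \in M_1$ and $m \to t$ for all $m \in M_2$, while for each $b \in L_B$ put $b \to m$ for all $m \in M_2$ and $m \to b$ for all $m \in M_1$. This single decision disposes of the only external pairs not already covered by (a) and (c), namely the $L_T$--$L_B$ pairs: for $t \in L_T$ and $b \in L_B$ we obtain $t \to p \to b$ through any $p \in M_1$ and $b \to q \to t$ through any $q \in M_2$. Hence after this step every pair of vertices with both endpoints outside $L_M$ is at distance at most $2$.

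It remains to orient the edges inside $L_M$ so that every $L_M$--$L_M$ pair, and every pair between $L_M$ and an external vertex, is at distance at most $2$. The external direction behaves well as soon as I orient the complete bipartite graph between $M_1 = \{p_1,\dots,p_k\}$ and $M_2 = \{q_1,\dots,q_k\}$ so that each vertex has both an in- and an out-neighbour in the opposite half: then $t$ reaches $M_2$ via $t \to M_1 \to M_2$ and is reached from $M_1$ via $M_1 \to M_2 \to t$ (and symmetrically for $b$, with the halves swapped), so $L_M$ lies within distance $2$ of every external vertex in both directions. A convenient such rule is $p_i \to q_j$ for $j \ne i$ together with $q_i \to p_i$; since $k \ge 2$ it simultaneously yields the two-step routes $p_i \to q_j \to p_j$ and $q_i \to p_i \to q_j$ that settle all same-half pairs within $L_M$.

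The genuine obstacle is the remaining $L_M$--$L_M$ pairs, and this is where the size condition bites: no orientation of $X[L_M]$ alone can shorten every such pair, since for $|L_M| = 4$ the induced subgraph $X[L_M]$ is $K_4$, whose oriented diameter is $3$ by \Cref{extension}. I would handle the leftover ``diagonal'' pairs $\{p_i, q_i\}$ by routing through an external layer, which dominance of $L_M$ makes available: orienting the diagonal as $q_i \to p_i$ lets a $b \in L_B$ supply $p_i \to b \to q_i$, whereas orienting it as $p_i \to q_i$ lets a $t \in L_T$ supply $q_i \to t \to p_i$, so the presence of a single vertex outside $L_M$ closes the gap (and when $V = L_M$ with $|L_M| \ge 6$ one simply orients the complete graph $X[L_M]$ with diameter $2$ via \Cref{extension}). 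Once these distances are verified, the assembled partial orientation $\OO$ has $diam(\XO) \le 2$, so $OD(X) \le 2$ by \Cref{obs od is at most diam of partial orient}; and since $X$ has at least four vertices it admits no orientation of diameter $1$, giving $OD(X) = 2$.
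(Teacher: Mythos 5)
Your construction is genuinely different from the paper's proof, which orients all of $X[L_M]$ as an optimal tournament containing a $P_4$-gadget (\Cref{kn-lemma}) and then closes every external pair into directed triangles through that gadget; you instead split $L_M$ into two halves, orient only the bipartite edges between them, and leave the within-half edges of $L_M$ untouched. Most of your verification checks out, but the case analysis of pairs inside $L_M$ has a concrete hole. Under your rule ($p_i \to q_j$ for $j \neq i$, $q_i \to p_i$) you verify the same-half pairs and the diagonal pairs, but you never address the ordered pairs $(q_j, p_i)$ with $i \neq j$. For these, the edge $\{q_j,p_i\}$ is oriented $p_i \to q_j$; the only out-neighbour of $q_j$ inside $L_M$ is $p_j$; the out-neighbours of $p_j$ are $\{q_\ell : \ell \neq j\}$ together with all of $L_B$; and every vertex of $L_B$ points into $M_2$ only. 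So no two-step route from $q_j$ to $p_i$ exists through $L_M$ or $L_B$, and the unique repair inside your own orientation is $q_j \to t \to p_i$ for some $t \in L_T$. This must be stated, and it shows that your construction needs $L_T \neq \emptyset$ in addition to the $L_B \neq \emptyset$ you already rely on for the diagonal routes $p_i \to b \to q_i$.

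You flag only the fully degenerate situation $V = L_M$, but the one-sided cases are also admitted by the hypotheses as written and break your orientation: for example $X = K_7$ with $|L_M| = 4$, $|L_B| = 3$, $L_T = \emptyset$ satisfies (a)--(c) (orient $X[L_B]=K_3$ as a directed triangle), yet in your orientation $q_j$ cannot reach $p_i$ in two steps; symmetrically, $L_B = \emptyset$ kills the diagonal pairs. The paper's construction only leans on an external vertex for the single bad pair of the $K_4$ orientation when $|L_M|=4$, so it degrades more gracefully. Since every application of the lemma in the paper has both $L_T$ and $L_B$ nonempty, your argument is salvageable for the paper's purposes, but the missing family of pairs and the explicit nonemptiness assumptions on both layers must be added before the verification is complete.
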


\begin{proof}
    We orient the graph $X$ with the following partial orientations $\OO_{\alpha}, \OO_{\beta}, \OO_{\gamma}$ (see \Cref{fig:Z_pq}).

    $\OO_{\alpha}$: Since the set $L_M$ induces a clique of size at least 4, by \Cref{kn-lemma} there is an optimal orientation of $X[L_M]$ having a $P_4$-gadget. In $\OO_{\alpha}$, we include this optimal orientation of $X[L_M]$ along with the optimal orientation of $X[L_T]$ (as per condition (a)) and $\OO_B$ (as per condition (c)).
         
    $\OO_{\beta}$: We first pick a $P_4$-gadget $(a,b,c,d)$ in $L_M$. Then, for any $u \in L_T$, we put $(u,a),(b,u),(u,c),(d,u)$ in $\OO_{\be}$. Also, depending on the directions of the edges $\{a,d\}$ and $\{b,c\}$ given in $\OO_{\al}$, we orient the edges between any vertex $r \in L_B$ and a vertex in $\{a,b,c,d\}$ such that $r,b,c$ as well as $r,a,d$ lie in a directed $3$-cycle. For example, if $(a,d) \in \OO_{\alpha}$, then we put $(r,a),(d,r)$ in $\OO_{\beta}$. See \Cref{fig:Z_pq}.
        
          \begin{figure}[hpt!]
         \label{fig:Z_pq}
\begin{subfigure}[b]{0.5\textwidth}
\centering
        \begin{tikzpicture}[scale=0.4]
        
        \coordinate (a) at (2,5.5);
        \coordinate (b) at (4,6.5);
        \coordinate (c) at (6,6.5);
        \coordinate (d) at (8,5.5);
        \coordinate (r) at (5,1);
        \coordinate (u) at (5,11);
        \coordinate (v) at (-1,6);
        \coordinate (w) at (1,6);
        
        \filldraw[color=teal!30, fill=teal!20, very thick][rounded corners] (-1.5,0) rectangle (11.5,2);
        \filldraw[color=teal!60, fill=teal!50, very thick][rounded corners] (-1.5,4) rectangle (11.5,7);
        \filldraw[color=teal!100, fill=teal!90, very thick][rounded corners] (-1.5,10) rectangle (11.5,12);

        \filldraw [black] (2.5,11) circle(5pt);
        \filldraw [black] (5,11) circle(5pt);
        \filldraw [black] (7.5,11) circle(5pt);
        \filldraw [black] (5,1) circle(5pt);
        \filldraw[black] (2.5,1) circle(5pt);
        \filldraw[black] (7.5,1) circle(5pt);
        \filldraw [black] (-1,6) circle(5pt);
        \filldraw [black] (1,6) circle(5pt);

        \draw (1.9,5.5) node[left][scale=0.8] {$a$};
        \draw (3.9,6.6) node[left][scale=0.8] {$b$};
        \draw (6.1,6.6) node[right][scale=0.8] {$c$};
        \draw (8.1,5.5) node[right][scale=0.8] {$d$};
        \draw (5,0.9) node[below][scale=0.8] {$r$};
        \draw (5,11.1) node[above][scale=0.8] {$u$};
        \draw (-1.1,6.1) node[above][scale=0.8] {$v$};
        \draw (1,6.1) node[above][scale=0.8] {$w$};
        
        \draw[blue, densely dashed] (a) -- (b) node[ currarrow, pos=0.5, xscale=1, sloped, scale=1, color=blue] {} ;
        \draw[ blue, densely dashed] (b) -- (c) node[ currarrow, pos=0.5, xscale=1, sloped, scale=1, color=blue] {} ;
        \draw[blue, densely dashed] (c) -- (d) node[ currarrow, pos=0.5, xscale=1, sloped, scale=1, color=blue] {} ;

       \draw (r) -- (b) node[ currarrow, pos=0.5, xscale=-1, sloped, scale=0.9] {} ;
       \draw (c) -- (r) node[ currarrow, pos=0.5, xscale=-1, sloped, scale=1] {} ; 
       \draw (b) -- (u) node[ currarrow, pos=0.5, xscale=1, sloped, scale=1] {} ; 
       \draw (u) -- (c) node[ currarrow, pos=0.5, xscale=1, sloped, scale=1] {} ; 
       \draw (u) -- (a) node[ currarrow, pos=0.5, xscale=-1, sloped, scale=1] {} ;
       \draw (u) -- (d) node[ currarrow, pos=0.5, xscale=-1, sloped, scale=1] {} ; 
         \draw (v) -- (w) node[ currarrow, pos=0.5, xscale=1, sloped, scale=1] {} ; 

         \draw[] (v) .. controls (2,10) and (4,11) .. (u) node[ currarrow, pos=0.35, xscale=-1, sloped, scale=1, color=black] {};
        \draw (w) -- (u) node[ currarrow, pos=0.5, xscale=1, sloped, scale=1] {} ; 

         \draw[] (v) .. controls (-2,4) and (3,2) .. (r) node[ currarrow, pos=0.35, xscale=-1, sloped, scale=1, color=black] {};
          \draw[] (w) .. controls (0,5) and (3,3) .. (r) node[ currarrow, pos=0.35, xscale=1, sloped, scale=1, color=black] {};

        


        \draw (12,11) node[right][scale=1] {$L_T$};
        \draw (12,6) node[right][scale=1] {$L_M$};
        \draw (12,1) node[right][scale=1] {$L_B$};

        \filldraw [blue] (a) circle(5pt);
        \filldraw [blue] (b) circle(5pt);
        \filldraw [blue] (c) circle(5pt);
        \filldraw [blue] (d) circle(5pt);

         \end{tikzpicture}
        \caption{ Partial orientations $\OO_{\al},\OO_{\be},\OO_{\gamma}$ when $|L_M| > 4$ }
        \label{fig:Z_pq_a}
    
\end{subfigure} \hspace{1 cm}
\begin{subfigure}[b]{0.43\textwidth}
\centering
          \begin{tikzpicture}[scale=0.4]
        
        \coordinate (a) at (2,4.5);
        \coordinate (b) at (4,6.5);
        \coordinate (c) at (6,6.5);
        \coordinate (d) at (8,4.5);
        \coordinate (r) at (5,1);
        \coordinate (u) at (5,11);
        \coordinate (v) at (-1,6);
        \coordinate (w) at (1,6);
        
         \filldraw[color=teal!30, fill=teal!20, very thick][rounded corners] (-1.5,0) rectangle (10,2);
        \filldraw[color=teal!60, fill=teal!50, very thick][rounded corners] (-1.5,4) rectangle (10,7);
        \filldraw[color=teal!100, fill=teal!90, very thick][rounded corners] (-1.5,10) rectangle (10,12);

        \filldraw [black] (2.5,11) circle(5pt);
        \filldraw [black] (5,11) circle(5pt);
        \filldraw [black] (7.5,11) circle(5pt);
        
         \filldraw [black] (2.5,1) circle(5pt);
        \filldraw [black] (5,1) circle(5pt);
        \filldraw [black] (7.5,1) circle(5pt);

        \draw (1.9,4.5) node[left][scale=0.8] {$a$};
        \draw (3.9,6.6) node[left][scale=0.8] {$b$};
        \draw (6.1,6.6) node[right][scale=0.8] {$c$};
        \draw (8.1,4.5) node[right][scale=0.8] {$d$};
        \draw (5,0.9) node[below][scale=0.8] {$r$};
        \draw (5,11.1) node[above][scale=0.8] {$u$};

        \draw[blue, densely dashed] (a) -- (b) node[ currarrow, pos=0.5, xscale=1, sloped, scale=1, color=blue] {} ;
        \draw[blue, densely dashed] (b) -- (c) node[ currarrow, pos=0.5, xscale=1, sloped, scale=1, color=blue] {} ;
        \draw[blue, densely dashed] (c) -- (d) node[ currarrow, pos=0.5, xscale=1, sloped, scale=1, color=blue] {} ;
        
        \draw[black, thick] (a) -- (d) node[ currarrow, pos=0.5, xscale=-1, sloped, scale=1, color=black] {} ;
        \draw[black, thick] (b) -- (d) node[ currarrow, pos=0.75, xscale=-1, sloped, scale=1, color=black] {} ;
        \draw[black, thick] (a) -- (c) node[ currarrow, pos=0.35, xscale=-1, sloped, scale=1, color=black] {} ;

       \draw (r) -- (b) node[ currarrow, pos=0.5, xscale=-1, sloped, scale=1] {} ;
       \draw (c) -- (r) node[ currarrow, pos=0.5, xscale=-1, sloped, scale=1] {} ; 
       \draw (b) -- (u) node[ currarrow, pos=0.5, xscale=1, sloped, scale=1] {} ; 
       \draw (u) -- (c) node[ currarrow, pos=0.5, xscale=1, sloped, scale=1] {} ; 
       \draw (u) -- (a) node[ currarrow, pos=0.5, xscale=-1, sloped, scale=1] {} ;
       \draw (u) -- (d) node[ currarrow, pos=0.5, xscale=-1, sloped, scale=1] {} ; 
       \draw (r) -- (a) node[ currarrow, pos=0.5, xscale=1, sloped, scale=1] {} ;
       \draw (r) -- (d) node[ currarrow, pos=0.5, xscale=1, sloped, scale=1] {} ;

        


        \draw (10.5,11) node[right][scale=1] {$L_T$};
        \draw (10.5,6) node[right][scale=1] {$L_M$};
        \draw (10.5,1) node[right][scale=1] {$L_B$};

        \filldraw [blue] (a) circle(5pt);
        \filldraw [blue] (b) circle(5pt);
        \filldraw [blue] (c) circle(5pt);
        \filldraw [blue] (d) circle(5pt);

         \end{tikzpicture}
         \caption{Partial orientations $\OO_{\al},\OO_{\be},\OO_{\gamma}$ when $|L_M|=$ $4$ }
         \label{fig:Z_pq_b}
\end{subfigure}
\caption{}
  \label{fig:Z_pq}
\end{figure}
     $\OO_{\gamma}$: When $|L_M|\neq 4$, partition the set $L_M\setminus \{a,b,c,d\}$ into disjoint pairs $\{v,w\}$. This partitioning is possible since $|L_M|$ is even. Now, we orient the edges between any vertex $r \in L_B$ and a vertex in $\{v,w\}$ such that $r,v,w$ lie in a directed $3$-cycle. For example, if $(v,w) \in \OO_{\alpha}$, then we put $(r,v),(w,r)$ in $\OO_{\gamma}$.

        The case when $|L_M|=4$ is slightly different and handled as shown in \Cref{fig:Z_pq_b}.

We now show that using $\OO_{\alpha},\OO_{\beta}$ and $\OO_{\gamma}$, we indeed get $OD(X)=2$. 
Let $X_\OO$ be the directed graph derived after orienting the edges of $X$ using the partial orientations $\OO_{\al},\OO_{\be},\OO_{\gamma}$. Note that using $\OO_{\al}$, there is a directed path of length at most $2$ between any two vertices of $L_T$ (and $L_B$). The same applies for $L_M$ if $|L_M|\geq 5$. Whereas, if $|L_M|=4$, then $\{a,d\}$ is the only pair of vertices in $L_M$ such that $\OO_{\al}$ gives a directed path of length $3$ from $a$ to $d$ in $X[L_M]$ (see \Cref{fig:Z_pq_b}). But since $(d,a)$ is in $\OO_{\al}$, we have put the directed edges $(a,r)$ and $(r,d)$ in $\OO_{\be}$ for any vertex $r\in L_B$. Hence, in this case, there is a directed path $ard$ of length $2$, which solves our purpose.
    
    From \Cref{fig:Z_pq} it is clear that $d_{X_{\OO}}(r,u)=d_{X_{\OO}}(u,r)=2$ for all vertices $u\in L_T$ and $r\in L_B$. Moreover, for all vertices $u\in L_T$, $y\in \{a,b,c,d\}\subset L_M$ and $r\in L_B$, we have $d_{X_{\OO}}(u,y)=d_{X_{\OO}}(y,u)=2$ and $d_{X_{\OO}}(r,y)=d_{X_{\OO}}(y,r)=2$. Now due to $\OO_{\gamma}$ every vertex $y$ of $L_M\setminus \{a,b,c,d\}$ participates in a directed $3$-cycle with any vertex $r$ of $L_B$ as well as with any vertex $u$ of $L_T$ (see \Cref{fig:Z_pq} ) and hence $d_{X_{\OO}}(u,y)=d_{X_{\OO}}(y,u)=2$ as well as $d_{X_{\OO}}(r,y)=d_{X_{\OO}}(y,r)=2$. 
    
    Hence, $diam(X_{\OO})=2$ and by \Cref{obs od is at most diam of partial orient}, we have $OD(X)=2$.      
\end{proof}

Let us state a useful fact about the structure of power graphs of cyclic groups.

\begin{fact}
\label{cyclic remark}
   Let $G$ be a cyclic group and $x,y \in G$. Then $\{x,y\}$ is an edge of $Pow(G)$ if and only if $o(x) | o(y)$ or $o(y)| o(x)$. Therefore $S$ is a clique in $Pow(G)$ if and only if $o(x)|o(y)$ or $o(y)|o(x)$ for all $x,y \in S$.
\end{fact}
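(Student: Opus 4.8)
The plan is to prove the biconditional for a single pair $\{x,y\}$ first, and then read off the clique statement as an immediate formal consequence. I would split the biconditional into its two directions and observe at the outset that one direction is already available from the preliminaries, while the converse is where the cyclic hypothesis actually does the work.

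For the forward direction, suppose $\{x,y\}$ is an edge of $Pow(G)$. Then \Cref{remark power graph} immediately yields $o(x) \mid o(y)$ or $o(y) \mid o(x)$, so no extra argument is needed here; note that this direction holds for an arbitrary group and does not use cyclicity at all.

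The substantive content is the converse, and this is where I expect the main (though modest) obstacle: converting a divisibility relation between orders into a genuine containment of cyclic subgroups. Assume without loss of generality that $o(x) \mid o(y)$; the goal is to conclude $x \in \langle y \rangle$, which by \Cref{def: power graph} makes $\{x,y\}$ an edge. The key tool is \Cref{gtf_CLT}. Since $\langle y \rangle$ is cyclic of order $o(y)$ and $o(x) \mid o(y)$, \Cref{gtf_CLT} provides a unique subgroup $H \le \langle y \rangle$ with $|H| = o(x)$. On the other hand, $\langle x \rangle$ is a subgroup of $G$ of order $o(x)$, and $o(x) \mid o(y) \mid |G|$, so \Cref{gtf_CLT} applied to the cyclic group $G$ guarantees that $G$ has a \emph{unique} subgroup of order $o(x)$. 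Both $\langle x \rangle$ and $H$ are subgroups of $G$ of order $o(x)$, so this uniqueness in $G$ forces $\langle x \rangle = H \le \langle y \rangle$. Hence $x$ is a power of $y$, and $\{x,y\}$ is an edge of $Pow(G)$, completing the converse.

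Finally, the clique statement follows formally: $S$ is a clique of $Pow(G)$ precisely when every pair $\{x,y\}$ with $x,y \in S$ and $x \neq y$ is an edge, and by the biconditional just proved this happens exactly when $o(x) \mid o(y)$ or $o(y) \mid o(x)$ for all $x,y \in S$. The only point worth a quick check is the degenerate case $x = y$, where the divisibility condition holds trivially and no edge is required, so it does not affect the equivalence.
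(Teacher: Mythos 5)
Your proof is correct. The paper states this as a \emph{Fact} without supplying a proof, and your argument --- the forward direction from \Cref{remark power graph}, and the converse via the uniqueness of the order-$o(x)$ subgroup of the cyclic group $G$ guaranteed by \Cref{gtf_CLT}, forcing $\langle x\rangle$ to coincide with the order-$o(x)$ subgroup of $\langle y\rangle$ --- is exactly the intended justification, with the degenerate case $x=y$ handled appropriately for the clique statement.
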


\begin{lemma}\label{cyclic: primes 2 and q }
    If $q\geq 3$ is a prime, then the oriented diameter of $Pow(\mathbb{Z}_{2^{\al}q^{\be}})$, $\al , \be \geq 1$, is $2$ except when $(\al,\be,q)=(1,1,3)$ (i.e., for $\ZZ_{6}$).
\end{lemma}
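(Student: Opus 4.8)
The plan is to realise $Pow(\mathbb{Z}_n)$ for $n=2^{\alpha}q^{\beta}$ as an instance of the three-layer decomposition of \Cref{general strategy for od 2}, and to run a strong induction on $\alpha$. Writing $e$ for the identity, recall from \Cref{dom of a power graph} that, since $n$ is not a prime power, the dominating vertices of $Pow(\mathbb{Z}_n)$ are exactly $gen(\mathbb{Z}_n)\cup\{e\}$. I would take
\[
L_M=gen(\mathbb{Z}_n),\qquad L_T=\{x\in\mathbb{Z}_n:\ o(x)\mid 2^{\alpha-1}q^{\beta}\},\qquad L_B=\mathbb{Z}_n\setminus(L_M\cup L_T),
\]
so that $L_T$ is the (unique, by \Cref{gtf_CLT}) index-two subgroup $\cong\mathbb{Z}_{2^{\alpha-1}q^{\beta}}$ and $L_B$ is precisely the set of elements of order $2^{\alpha}q^{b}$ with $0\le b\le\beta-1$.

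First I would settle condition (b). The set $L_M$ is dominating and $|L_M|=\phi(n)=2^{\alpha-1}q^{\beta-1}(q-1)$. Since $q$ is odd, $q-1$ is even, so $|L_M|$ is even; moreover $|L_M|\ge 4$ unless $\alpha=\beta=1$ and $q=3$, that is, unless $\mathbb{Z}_n=\mathbb{Z}_6$. This one parity-and-size computation is exactly what isolates the exceptional group $\mathbb{Z}_6$ and establishes condition (b) in every remaining case. For condition (a) I would invoke the induction hypothesis: $X[L_T]=Pow(\mathbb{Z}_{2^{\alpha-1}q^{\beta}})$ has oriented diameter $2$, hence possesses a diameter-two orientation. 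The base case $\alpha=1$ is immediate, since then $L_T\cong\mathbb{Z}_{q^{\beta}}$ is a clique of odd size $q^{\beta}\ge 3\ne 4$, handled directly by \Cref{extension}.

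Condition (c) is where the clique structure of $L_B$ is used: the orders $2^{\alpha},2^{\alpha}q,\dots,2^{\alpha}q^{\beta-1}$ form a chain under divisibility, so by \Cref{cyclic remark} $L_B$ is a clique. When $|L_B|=2^{\alpha-1}q^{\beta-1}\ge 3$ and $\ne 4$, \Cref{extension} already orients $X[L_B]$ with diameter $2$, which witnesses (c) without even using edges to $L_T$. The degenerate sizes are the delicate part: $|L_B|=1$ (for $\mathbb{Z}_{2q}$, where (c) is vacuous), $|L_B|=2$ (for $\mathbb{Z}_{4q}$), and $|L_B|=4$ (for $\mathbb{Z}_{8q}$, where $OD(K_4)=3$). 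In these cases I must genuinely exploit the freedom in condition (c) to route the one deficient ordered pair of $L_B$ through a vertex of $L_T$ — concretely through $e$, which dominates — orienting the relevant edges of $E(L_T,L_B)$ so as to close a length-two path in the missing direction.

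The single place the recursion stalls is $\mathbb{Z}_{12}$: its index-two subgroup is $\mathbb{Z}_6$ (oriented diameter $3$), and, symmetrically, lowering the exponent of $q$ instead yields $\mathbb{Z}_4=K_4$ (oriented diameter $3$), so no subgroup-based $L_T$ has diameter two. I would treat $\mathbb{Z}_{12}$ by hand with the ad hoc split $L_T=\{x:o(x)\mid 6\}$ (a $K_5$), $L_B=\{x:o(x)\in\{2,4\}\}$ (a $K_3$), and $L_M$ the four generators, each of which is a clique meeting the hypotheses of \Cref{general strategy for od 2}. Assembling the pieces, the lemma outputs an orientation of diameter $2$, and since $Pow(\mathbb{Z}_n)$ is not complete it also has oriented diameter at least $2$; hence $OD(Pow(\mathbb{Z}_n))=2$ in all non-excluded cases. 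I expect the main obstacle to be precisely condition (c) in the degenerate clique sizes together with the exceptional $\mathbb{Z}_{12}$: these are the points where neither a subgroup nor a single clique behaves uniformly, forcing reliance on the routing edges $E(L_T,L_B)$ and, for $\mathbb{Z}_{12}$, a bespoke two-clique decomposition, rather than on one clean construction.
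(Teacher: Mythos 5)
Your proof is correct, and it instantiates the same framework as the paper (\Cref{general strategy for od 2} plus an induction) but in a genuinely mirrored way. The paper inducts on the exponent of $q$: it sets $G_j\leq \ZZ_{2^{\al}q^{\be}}$ of order $2^{\al}q^{j}$, places the inductively oriented subgroup $G_j$ in $L_B$ (so condition~(c) is discharged by the induction hypothesis), takes $L_M=gen(G_{j+1})$, and lets $L_T$ be the clique of non-generators of order $2^kq^{j+1}$; its exceptional small case is $(\al,q)=(1,3)$, which it absorbs by starting the induction at $j=2$, i.e.\ at $\ZZ_{18}$, rather than by an ad hoc decomposition. You instead induct on the exponent of $2$, place the inductively oriented index-two subgroup in $L_T$ (discharging condition~(a)), and make $L_B$ the clique of elements of order $2^{\al}q^b$; the price is that you must treat the degenerate clique sizes $|L_B|\in\{1,2,4\}$ by routing the deficient ordered pair through $e\in L_T$ via $E(L_T,L_B)$ — which condition~(c) of \Cref{general strategy for od 2} is designed to permit, and which the paper's proof of that lemma never re-orients elsewhere, so there is no conflict — and you must special-case $\ZZ_{12}$, where the index-two subgroup is $\ZZ_6$. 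Both routes are valid; the paper's choice avoids the small-clique casework at the cost of a shifted base case, while yours keeps a single uniform induction variable at the cost of one bespoke group.

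One slip in the $\ZZ_{12}$ paragraph: as written, $L_T=\{x: o(x)\mid 6\}$ is the full $\ZZ_6$ subgroup — six elements, not five, and its induced subgraph is $Pow(\ZZ_6)$, which is not complete (the order-$2$ element is not adjacent to the order-$3$ elements). Your own parenthetical ``(a $K_5$)'' and your $L_B=\{x: o(x)\in\{2,4\}\}$ make the intent clear: you mean $L_T=\{x: o(x)\in\{1,3,6\}\}$, which is indeed a $K_5$ and is disjoint from $L_B$. With that correction the three sets partition $\ZZ_{12}$, all hypotheses of \Cref{general strategy for od 2} hold, and the argument goes through.
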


\begin{proof}
    In this proof, we use the fact that a cyclic group $H$ has exactly $\phi(k)$ elements of order $k$ for each divisor $k$ of $|H|$. Let $G=\ZZ_{2^{\al}q^{\be}}$. Let $G_j$ be the subgroup of $G$ of order $2^{\al}q^{j}$, $1\leq j \leq \be$ (Since $G$ is cyclic, unique $G_j$ exits by \Cref{gtf_CLT}.). The idea is to inductively show that if $Pow(G_j)$ has oriented diameter $2$, so does $G_{j+1}$. For this, we apply \Cref{general strategy for od 2} with $L_B=G_j$, $L_M=gen(G_{j+1})=\{x | o(x)=2^{\al}q^{j+1}\}$, and $L_T=G_{j+1}\setminus (L_B\cup L_M)=\{x | \ o(x)=2^{k}q^{j+1}, \ 0\leq k \leq (\al-1)\}$. The proof is by induction on $j$. There are two base cases.

\textit{Base cases:}  
\begin{itemize}
     \item[1.] $(\al,q)\neq (1,3)$. Then, we use $j=1$ as the base case.\\
     We divide $G_1$ into three sets $L_B=\{x| \ o(x)=1 \text{ or } o(x)=2^k \cdot q \text{ where } 0 \leq k < \al \}$; $L_M= gen(G_1)=\{x| \ o(x)=2^{\al} \cdot q\}$; $L_T=\{x | \ o(x)=2^k \text{ where } 1 \leq k \leq \al \}$. Using \Cref{cyclic remark}, $L_B$ and $L_T $ induce complete subgraphs and, moreover, the corresponding induced subgraphs are isomorphic to  $K_{2^{\al-1}(q-1)+1}$ and $ K_{2^{\al}-1}$ respectively. $| L_M|= \phi(2^{\al}\cdot q)=2^{\al-1}(q-1)\geq 4$.

     \item[2.] $(\al,\be)=(1,3)$. Then, we use $j=2$ as the base case. \\
     We divide  $G_2$ into three sets $L_B=\{x| \ o(x)=2 \text{ or } 2 \cdot 3\}$; $L_M= gen(G_2)=\{x| \ o(x)=2 \cdot 3^{2}\}$; $L_T=\{x | \ o(x)=3^k \text{ where } 0\leq k \leq 2\}$. Using \Cref{cyclic remark}, $L_B$ and $L_T $ induce complete subgraphs and, moreover, the corresponding induced subgraphs are isomorphic to $ K_3$ and $ K_7$ respectively. $|L_M|= \phi(2 \cdot 3^{2})=6$.
     
 \end{itemize}

Now we verify that in both cases, the sets $L_B, L_M$ and $L_T$ satisfy the conditions of \Cref{general strategy for od 2}. Since, in the first case, $(\alpha,q) \neq (1,3)$, $|L_B|$ and $|L_T|$ are not equal to $2,4$ for any value of ${\al}$. So, in both cases, $L_B$ and $L_T$ are either singleton sets or induce complete subgraphs with oriented diameter $2$. Hence, it is sufficient to take $\OO_{B}$ as the optimal orientation of $X[L_B]$. Moreover, in each case, due to \Cref{dom of a power graph} $L_M$ consists of dominating vertices of $Pow(G_j)$, for $j=1,2$. Hence, by \Cref{general strategy for od 2}, the oriented diameter of $Pow(G_j)$, $j=1,2$, is $2$.
     
 \textit{Inductive step:} We assume that $OD(Pow(G_j))=2$ and want to show that $OD(Pow(G_{j+1}))=2$. For this, we divide $G_{j+1}$ into $L_B$, $L_M$ and $L_T$ as described in the proof sketch. Now using \Cref{cyclic remark} in $L_T$, any element of order $2^{k_1}q^{j+1}$ is adjacent to any element of order $2^{k_2}q^{j+1}$, where $0\leq k_1 < k_2 \leq (\al-1)$. Hence, $Pow(G_{j+1})[L_T]$ is a complete subgraph of size at least $\phi(q^2)\geq 6$ that can be oriented with diameter $2$. The set $L_M=gen(G_{j+1})$ contains dominating vertices of $Pow(G_{j+1})$. Moreover, as this is not the base case, $|L_M|=\phi(2^{\al}q^{j+1})\geq \phi(2^2 \cdot 3^2)=12$. Therefore, by \Cref{general strategy for od 2}, $OD(Pow(G_{j+1}))=2$.

Hence, by mathematical induction, $Pow(G_{\be})$ has oriented diameter $2$.
\end{proof}

We now state two group theoretic facts which are used in the proof of \Cref{cyclic: tool for incrementing} and \Cref{cyclic_main}. For a proof of \Cref{direct product of co-prime order}, one can refer to \Cref{app_direct product of co-prime order}.

\begin{fact}\label{direct product of co-prime order}
Let $G$ and $H$ be two finite groups such that $gcd(|G|,|H|)=1$. If $g_1$ generates $g_2$ in $G$ and $h_1$ generates $h_2$ in $H$, then $(g_1,h_1)$ generates $(g_2,h_2)$ in $G\times H$.  
\end{fact}

\begin{fact}\label{gtf_Z_mn}\cite{dummit2004abstract}
 If $m$ and $n$ are two relatively prime numbers, then $\ZZ_{mn} \cong \ZZ_{m}\times \ZZ_{n}$. 
\end{fact}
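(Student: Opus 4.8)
The plan is to prove this via the standard observation that an isomorphism between two finite groups of the same order can be obtained by showing the target is cyclic. Since both $\ZZ_{mn}$ and $\ZZ_m \times \ZZ_n$ have exactly $mn$ elements, and every cyclic group of order $mn$ is isomorphic to $\ZZ_{mn}$ (by the classification of cyclic groups), it suffices to exhibit a single generator of $\ZZ_m \times \ZZ_n$, i.e., an element of order $mn$. The natural candidate is the element $(1,1)$, where the first coordinate is $1 \in \ZZ_m$ and the second is $1 \in \ZZ_n$.

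The key computation is the order of $(1,1)$ in $\ZZ_m \times \ZZ_n$. First I would record the general principle that in a direct product the order of $(g,h)$ is $\operatorname{lcm}(o(g),o(h))$: a power $(g,h)^k = (g^k, h^k)$ equals the identity if and only if $o(g)\mid k$ and $o(h)\mid k$ simultaneously, so the least such positive $k$ is precisely $\operatorname{lcm}(o(g),o(h))$. Applying this with $o(1)=m$ in $\ZZ_m$ and $o(1)=n$ in $\ZZ_n$ gives that the order of $(1,1)$ equals $\operatorname{lcm}(m,n)$.

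The place where the hypothesis $\gcd(m,n)=1$ enters—and essentially the only nontrivial point—is the identity $\operatorname{lcm}(m,n) = mn/\gcd(m,n)$, which under coprimality collapses to $\operatorname{lcm}(m,n)=mn$. Thus $(1,1)$ has order $mn$, which equals the order of the whole group $\ZZ_m \times \ZZ_n$; hence $(1,1)$ generates the group and $\ZZ_m \times \ZZ_n$ is cyclic of order $mn$, giving $\ZZ_{mn} \cong \ZZ_m \times \ZZ_n$. I do not expect any genuine obstacle here: the entire content is the coprimality-driven equality $\operatorname{lcm}(m,n)=mn$, and everything else is a routine order computation in a direct product.

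As an alternative (and a way to make the isomorphism explicit rather than abstract), I would instead define the reduction map $\phi\colon \ZZ_{mn}\to \ZZ_m\times\ZZ_n$ by $\phi(x \bmod mn) = (x \bmod m,\, x \bmod n)$, verify it is a well-defined group homomorphism, and check injectivity: if $x\equiv 0 \pmod m$ and $x\equiv 0 \pmod n$ with $\gcd(m,n)=1$, then $mn\mid x$, so $x\equiv 0\pmod{mn}$, whence $\ker\phi$ is trivial. Injectivity between finite sets of equal cardinality forces bijectivity, so $\phi$ is an isomorphism. This second route is morally the Chinese Remainder Theorem, and again the only spot using $\gcd(m,n)=1$ is the implication $m\mid x,\ n\mid x \Rightarrow mn\mid x$, which I would flag as the single load-bearing step.
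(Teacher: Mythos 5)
Your proof is correct. The paper does not prove this fact at all --- it is quoted directly from Dummit and Foote \cite{dummit2004abstract} without proof --- and both of your arguments (the order computation showing $(1,1)$ generates $\ZZ_m\times\ZZ_n$, and the explicit Chinese-Remainder-style reduction map) are standard, complete, and correctly identify coprimality as the single load-bearing hypothesis.
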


\begin{lemma}\label{cyclic: tool for incrementing}
    Let $H$ be a cyclic group such that $Pow(H)$ has oriented diameter $2$. If $gcd(|H|,p)=1$, where $p\neq 2$ is a prime, then the oriented diameter of $Pow(H\times \ZZ_{p^{\alpha}})$, $\alpha\geq 1$, is $2$.
\end{lemma}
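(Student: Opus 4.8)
The plan is to reduce the statement to a repeated single-prime increment and to run the same inductive machinery that drives \Cref{general strategy for od 2}. First I would record that, since $H$ is cyclic and $\gcd(|H|,p)=1$, the group $G:=H\times\ZZ_{p^{\alpha}}$ is itself cyclic of order $|H|p^{\alpha}$ (by \Cref{gtf_Z_mn}); write $m=|H|$. By \Cref{gtf_CLT}, $G$ has a unique subgroup $G_j$ of order $mp^{j}$ for each $0\le j\le\alpha$, with $G_0\cong H$ and $G_{\alpha}=G$, and I would induct on $j$. The base case $j=0$ is exactly the hypothesis $OD(Pow(H))=2$.

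For the inductive step I would assume $OD(Pow(G_j))=2$ and apply \Cref{general strategy for od 2} to $X=Pow(G_{j+1})$ with the layers $L_B=G_j$, $L_M=gen(G_{j+1})$, and $L_T=G_{j+1}\setminus(G_j\cup L_M)$. Identifying $G_{j+1}\cong H\times\ZZ_{p^{j+1}}$ and using that $o(h,z)=o(h)\,o(z)$ with the $H$-part and $p$-part of coprime order, \Cref{cyclic remark} shows that two elements are adjacent exactly when their $H$-orders are divisibility-comparable and their $p$-orders are divisibility-comparable. This gives $L_M=gen(H)\times gen(\ZZ_{p^{j+1}})$ and $L_T=(H\setminus gen(H))\times gen(\ZZ_{p^{j+1}})$. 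Condition (b) is then immediate: $|L_M|=\phi(m)\,\phi(p^{j+1})$ is even because $p$ is odd (so $\phi(p^{j+1})=p^{j}(p-1)$ is even), it is at least $4$ because $OD(Pow(H))=2$ forces $m\ge 3$ and hence $\phi(m)\ge 2$, and $L_M$ dominates $Pow(G_{j+1})$ by \Cref{dom of a power graph}. Condition (c) is handled by taking $\OO_B$ to be a diameter-$2$ orientation of $X[L_B]=Pow(G_j)$, which exists by the induction hypothesis.

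The real work is condition (a): orienting $X[L_T]$ with diameter at most $2$. Here I would use the structural description above. Grouping the elements of $L_T$ by the order $d\mid m$, $d<m$, of their $H$-part, the set $A_d$ of elements of order $dp^{j+1}$ is a clique (it is a single GE-class, being the generators of the unique subgroup of order $dp^{j+1}$), two cliques $A_d,A_{d'}$ are completely joined precisely when $d\mid d'$ or $d'\mid d$, and $A_1=\{e\}\times gen(\ZZ_{p^{j+1}})$ is a dominating clique of $X[L_T]$ of even size $\phi(p^{j+1})$. Thus $X[L_T]$ is a clique blow-up of the poset of proper divisors of $m$, and I would orient it by a nested application of \Cref{general strategy for od 2}: use a lowest clique (e.g.\ $A_1$) as the dominating even middle layer and split the remaining, higher cliques into the two side layers, routing incomparable maximal cliques to each other through the common lower clique exactly as in the proof of \Cref{general strategy for od 2}.

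The hard part is precisely this last step, for two reasons. First, when $m$ has three or more prime factors the poset of proper divisors of $m$ has width greater than $2$, so a single application of the three-layer lemma does not suffice and one must recurse through the divisor lattice, at each stage peeling off a dominating clique of even order. Second, the smallest dominating clique $A_1$ has size $\phi(p^{j+1})$, which equals $2$ in the boundary case $p=3$, $j=0$; there I would first enlarge the middle layer by absorbing two generators from $L_M$ (each of which is adjacent to every vertex of $L_T$) to reach an even dominating set of size at least $4$, checking that $|L_M|=2\phi(m)$ still leaves condition (b) intact after the transfer (here $\phi(m)\ge 4$, since the only $m$ with $\phi(m)=2$ are $3,4,6$, all excluded by coprimality with $3$ or by $OD(Pow(H))=2$). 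Completing this recursion yields $OD(Pow(G_{j+1}))=2$, and the induction gives $OD(Pow(G))=2$.
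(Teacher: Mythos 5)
Your setup (the tower $G_0\le\cdots\le G_\alpha$, induction on $j$, and the identification of $L_M$ and of the new non-generators) is correct, but you have swapped the roles of the two side layers relative to what \Cref{general strategy for od 2} can actually absorb, and this creates a genuine gap. By putting $G_j$ into $L_B$, you discharge condition (c) with the induction hypothesis, but you are then left to verify condition (a) for $L_T=(H\setminus gen(H))\times gen(\ZZ_{p^{j+1}})$, i.e.\ to produce a diameter-$2$ orientation of the clique blow-up of the poset of proper divisors of $m=|H|$. You correctly flag this as the hard part, but the proposed fix --- ``recurse through the divisor lattice, at each stage peeling off a dominating clique of even order'' --- does not work: after you peel off $A_1$, the poset of proper divisors of $m$ strictly between $1$ and $m$ generally has \emph{no} dominating element (already for $m=p_1p_2p_3$ the comparability graph on $\{p_1,p_2,p_3,p_1p_2,p_1p_3,p_2p_3\}$ is a $6$-cycle, and no divisor is comparable to all others), so \Cref{general strategy for od 2} cannot be reapplied, and the required split of the remaining cliques into two side layers each satisfying the internal length-$2$ conditions (a) and (c) is not justified. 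Note also that this induced subgraph is genuinely delicate: for $H=\ZZ_{15}$ the graph $Pow(H)[H\setminus gen(H)]$ is two cliques glued at the single cut vertex $e$, so the blow-up only becomes $2$-edge-connected because the fibres have size $\phi(p^{j+1})\ge 2$; nothing you have cited gives a diameter-$2$ orientation of such blow-ups, and proving that claim is essentially a new lemma of comparable difficulty to the one you are trying to establish.

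The paper avoids this entirely by assigning the layers the other way: $L_T=G_{j-1}$ (so condition (a) is exactly the induction hypothesis) and $L_B=$ the new elements, with $L_M=gen(H)\times([g_j]\setminus\{g_j\})$. Condition (c) is the permissive one --- it allows edges of $E(L_T,L_B)$ --- and the paper exploits this twice: it lifts the assumed diameter-$2$ orientation $\OO_H$ of $Pow(H)$ to $L_B$ by orienting $\{(u,g),(v,g')\}$ according to the direction of $\{u,v\}$ in $\OO_H$ (legitimate because adjacency in $L_B$ only depends on the first coordinates, by \Cref{direct product of co-prime order}), which settles all pairs with distinct first coordinates in two steps; and it routes the remaining same-first-coordinate pairs inside the cliques $\{u\}\times[g_j]$, patching the exceptional sizes $|[g_j]|\in\{2,4\}$ through the identity $e'\in L_T$. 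If you want to salvage your layering, you would need to either move $L_M$ (or a dominating portion of $Pow(H)$'s generators) into $L_T$ so that a lift of $\OO_H$ becomes available there, or supply an independent proof that the divisor-poset blow-up has oriented diameter $2$; as written, the inductive step is not complete.
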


First, we give a proof sketch of the lemma.

\vspace{0.1cm}

\begin{sketch}
Let $\Gamma=Pow(H\times \ZZ_{p^{\alpha}})$. We pick elements $g_0,\ldots, g_\alpha\in \ZZ_{p^\alpha}$ such that $o(g_i)=p^i$. This gives a tower of subgroups $\{e\}=\ang{g_0}\leq\ldots \leq \ang{g_\alpha}=\ZZ_{p^\alpha}$, where $e$ is the identity element of $\ZZ_{p^{\al}}$. Let $G_j=H\times \ang{g_j}$. Since $|H|$ and $|\ang{g_j}|$ are coprime to each other, by \Cref{gtf_Z_mn}, each $G_j$, $0 \leq j \leq \al$ is a cyclic subgroup of $H \times \ZZ_{p^{\al}}$. These subgroups form a tower of cyclic subgroups $G_0\leq \ldots \leq G_\alpha$. We note that $H\cong G_0$ and $G_\alpha=H\times \ZZ_{p^\alpha}$. By induction on $j$, we show that the induced subgraph $\Gamma_j=\Gamma[G_j]=Pow(G_j)$ has oriented diameter 2. 

As $\Gamma_0 \cong Pow(H)$, we have  $OD(\Gamma_0)=2$. For the inductive step, we use \Cref{general strategy for od 2}. Let  $L_T=G_{j-1}$. By the induction hypothesis, $\Gamma_j[L_T]=\Gamma_{j-1}$ has oriented diameter 2. Therefore, condition (a) of \Cref{general strategy for od 2} is satisfied. The set of generators of $G_j$ is $gen(H)\times [g_j]$. We pick $L_M$ to be the set of generators $gen(H)\times ([g_j]\setminus \{g_j\})$. Since $j>0$ and $p \neq 2$, $|[g_j]|=\phi(p^j)\geq 2$. Thus, $L_M \neq \emptyset$. We finally set $L_B=G_j\setminus (L_T\cup L_M)=((H\setminus gen(H))\times [g_j])\sqcup (gen(H)\times \{g_j\})$.  We show conditions (b) and (c) of \Cref{general strategy for od 2} in the main proof.
 \hfill $\lhd$

\end{sketch}

\vspace{0.1cm}

Now we go into more details of the proof.

 \begin{proof} We note that $|H|\notin\{2,4\}$ as $Pow(H)$ has oriented diameter 2. Moreover, if $|H|=3$ then $p\geq 5$.   
    
The set $L_M$ being a subset of generators of $G_j$ consists of dominating vertices of $\Gamma_j=Pow(G_j)$,  and $|L_M|=|gen(H)|\times|[g_j]\setminus \{g_j\}|$ is even since $|gen(H)|=\phi(|H|)$ is an even number (as $|H|\neq 2$).
 
Now we show conditions (b) and (c) of \Cref{general strategy for od 2}.

    The set $L_M$ being a subset of generators of $G_j$ consists of dominating vertices of $\Gamma_j=Pow(G_j)$,  and $|L_M|=|gen(H)|\times|[g_j]\setminus \{g_j\}|$ is even since $|gen(H)|=\phi(|H|)$ is an even number (as $|H|\neq 2$).
    
    For $L_M$ to satisfy the condition (b) of \Cref{general strategy for od 2}, $|L_M|$ should be greater than or equal to $4$. As $|H|\neq 2$, we have $|gen(H)| \geq 2$. But the situation when $|gen(H)|=2$ and $|[g_j]|=2$ is problematic since it yields $|L_M|=2$. Now $|[g_j]|=2$ happens only if $p=3$. But in that case, as $gcd(|H|,p)=1$ and $|H| \neq 2$ or $4$, $|H|$ must have a prime factor greater than or equal to $5$ or $|H|$ must be divisible by $2^3$. In that case, $|gen(H)| \geq 4$ and hence, $|L_M| \geq 4$.

    The rest of the proof involves showing that condition (c) of \Cref{general strategy for od 2} is satisfied, i.e., there exists an orientation $\OO_B$ of the edges of $\Gamma[L_B]$ and $E(L_T, L_B)$ such that there is a directed path of length at most $2$ between any two vertices using only the directed edges in $\OO_B$.

Observe that, $L_B =((H\setminus gen(H))\times [g_j])\sqcup (gen(H)\times \{g_j\}) \subseteq G_{j} \setminus G_{j-1}$. Let $\OO_H$ be an orientation of $H$  having diameter $2$. Our idea is to mimic the orientation $\OO_H$ of $H$ while being oblivious to the second component of a vertex in $L_B$. In other words, for pairs of vertices $(u,g)$ and $(v,g')$ in $L_B$, if $(u,v) \in \OO_H$ we put $((u,g),(v,g'))$ in $\OO_B$, else we put $( (v,g'), (u,g) )$ in $\OO_B$. Note that if $\{u,v\}$ is an edge in $Pow(H)$, then $\{(u,g), (v,g')\}$ is an edge in $\Gamma_j$ (This can be verified easily by using \Cref{direct product of co-prime order}.).

     Since there is a directed path of length at most $2$ between two distinct vertices $u$ and $v$ in $Pow(H)$, the newly added directed edges in $\OO_B$ imply a directed path of length at most $2$ between two distinct vertices $(u,g'_{j})$ and $(v,g''_{j})$, where $u \neq v$ and $g'_{j}$ may or may not be equal to $g''_{j}$. So, the only remaining case to handle is when $u=v$, i.e., when both the vertices are from $(H \setminus gen(H))\times [g_j]$. Now, observe that for all $u \in H \setminus gen(H)$, the set $\{u\} \times [g_j] \subseteq L_B$ is a clique (due to \Cref{direct product of co-prime order}). Now if $|[g_j]| \neq 2$, we put the optimal orientation of $ \Gamma[\{u\}\times [g_j]]$ (using \Cref{extension}) in $\OO_B$.

     Note that if $|[g_j]| \neq 2$ or $4$, for any $a,b \in \{u\}\times [g_j]$, for all $u \in H \setminus gen(H)$, we have a directed path of length at most $2$. If $|[g_j]|=2$ or $4$, there exist exactly two vertices $a=(u,g_j), b=(u,g'_j)$ in each $\{u\} \times [g_j]$ such that $d_{\Gamma_{\OO_{B}}}(a,b)=3$ (where $\Gamma_{\OO_B}$ is the directed graph $(V(\Gamma),\OO_B)$). To solve this, we use the edges $E(L_T,L_B)$. Let $e'$ be the identity element of $H\times \ZZ_{p^{\alpha}}$. Since $e'\in H\times \ang{g_0} = G_0 \subseteq G_{j-1}$, $e'$ is in $L_T$ and $e'$ is adjacent to all the vertices in $L_B$. Now for a fixed $u \in H \setminus gen(H)$, we orient the edges $\{a,e'\}$ and $\{b,e'\}$ (depending on whether $(a,b) \in \OO_B$ or $(b,a) \in \OO_B$) so that $a,b,e'$ create a directed triangle in $\Gamma_{\OO_{B}}$. 
     We do this for all $u \in H \setminus gen(H)$. This gives a directed path of length at most $2$ for all the remaining pairs of vertices from $L_B$. Hence, condition (c) of \Cref{general strategy for od 2} is satisfied. Now, we apply \Cref{general strategy for od 2} and get an orientation of $\Gamma_j$.    
 \end{proof}

Now we are ready to prove the main result (\Cref{cyclic_main}) of this section.

\vspace{0.2cm}

\noindent \textbf{Proof of \Cref{cyclic_main}.}
The cases when $n=1$ and $n=2$ are straightforward to observe. By \Cref{extension} and observing $Pow(\mathbb{Z}_4)=K_4$, we have $OD(Pow(\mathbb{Z}_4))=3$. We have proved the case for $n=6$ in \Cref{app_cyclic_Z6}. Using \Cref{dom of a power graph} and \Cref{extension}, we get $OD(Pow(G))=2$ for a cyclic group $G \in \Gprime$  that is not $\mathbb{Z}_2$ or $\mathbb{Z}_4$.

Now, we are left with the case when $n$ has at least two prime factors and $n\neq 6$. Let $n={p_1^{\alpha_1} p_2^{\alpha_2}p_3^{\alpha_3}... p_k^{\alpha_k}}$ be the prime factorization of $n$, where $p_i$'s are distinct primes, $\al_i$'s are positive powers and $k \geq 2$. By \Cref{gtf_Z_mn}, we can write $\mathbb{Z}_n= \prod_{i\in S} \mathbb{Z}_{p_i^{\alpha_i}} \times \prod_{i\notin S} \mathbb{Z}_{p_i^{\alpha_i}}$ for any $S\subseteq [n]$. One can check that by suitably picking a subset $S$ of size at most $2$, we can ensure that the oriented diameter of the power graph of  $H=\prod_{i\in S} \mathbb{Z}_{p_i^{\alpha_i}}$ is 2. In particular, we consider $p_1$ and $p_2$ the smallest and the largest prime, respectively. We take a recursive approach to achieve an orientation of $Pow(\mathbb{Z}_n)$ with diameter $2$. If $p_1=2$, then we start with orienting the power graph of $H=\ZZ_{2^{\al_1}{p_2}^{\al_2}}$ with diameter $2$ by applying \Cref{cyclic: primes 2 and q }. If $p_1>2$, then we start with orienting the power graph of $H=\ZZ_{p_1^{\al_1}}$ with diameter $2$. In the first case, we extend $H$ recursively by $\ZZ_{p_3}^{\al_3},\dots,\ZZ_{p_k}^{\al_k}$, and with $(k-2)$ applications of \Cref{cyclic: tool for incrementing}, we get $OD(Pow(\ZZ_n))=2$. Whereas, in the second case, we extend $H$ recursively by $\ZZ_{p_2}^{\al_2},\dots,\ZZ_{p_k}^{\al_k}$, and with $(k-1)$ applications of \Cref{cyclic: tool for incrementing}, we get $OD(Pow(\ZZ_n))=2$. \hfill $ \square$

\section{Oriented Diameter of Power Graphs of $p$-groups}\label{section: p-group}

In this section, we study the oriented diameter of power graphs for finite non-cyclic groups from the class $\Gprime$ (recall that $\Gprime =\{ G \ | \ G \text{ is a $p$-group for some prime $p$}\}$). The main result of this section is \Cref{od of all p-groups}, where we fully characterize the group class $\Gprime$.

The definition of generalized quaternion group $Q_{2^n}$ of order $2^n$ can be found in any standard textbook of abstract algebra (for example, see \cite{gorenstein1980finite}). We note that a generalized quaternion group of order $4n$ for any $n$ can be defined. In this paper, we just need quaternion groups of order $2^n$ and a few facts about such groups, which we list below.

\begin{fact}\label{fact: gen quat} \cite[Theorem 4.2]{conrad2014generalized}
   The generalized quaternion $Q_{2^n}, n\geq 3$ contains \footnote{Note that $Q_{2}\cong \ZZ_2$ and $Q_{2^2} \cong \ZZ_4$.} exactly one maximal cyclic subgroup $\langle x \rangle$ of order $2^{n-1}$, and each element outside $\langle x \rangle$ is of order $4$. 
\end{fact}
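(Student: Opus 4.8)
The plan is to work from the standard presentation
\[
Q_{2^n}=\ang{x,y\mid x^{2^{n-1}}=e,\ y^2=x^{2^{n-2}},\ yxy^{-1}=x^{-1}},
\]
in which $o(x)=2^{n-1}$ and $o(y)=4$. First I would record the coset structure: since $\ang{x}$ has index $2$ in $Q_{2^n}$ it is normal, and every element lies either in $\ang{x}$ or in the nontrivial coset $\ang{x}y$. Thus every element outside $\ang{x}$ has the form $x^i y$ for some $0\le i<2^{n-1}$.

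The core computation is the order of such an element. Using the twisting relation $y x^i = x^{-i} y$ (immediate from $yxy^{-1}=x^{-1}$) together with $y^2=x^{2^{n-2}}$, I would compute
\[
(x^i y)^2 = x^i\,(y x^i)\,y = x^i x^{-i} y^2 = y^2 = x^{2^{n-2}}.
\]
The element $x^{2^{n-2}}$ is the unique element of order $2$ in $\ang{x}$, and it is central in $Q_{2^n}$, so $(x^iy)^2\neq e$ while $(x^iy)^4=(x^{2^{n-2}})^2=x^{2^{n-1}}=e$. Hence $o(x^iy)=4$, which establishes that every element outside $\ang{x}$ has order $4$. As a by-product this shows $x^{2^{n-2}}$ is the \emph{only} involution of the whole group, a fact I would reuse below.

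For the uniqueness assertion I would argue that any cyclic subgroup of order exceeding $4$ is forced to sit inside $\ang{x}$. Indeed, if $C=\ang{c}$ is cyclic with $|C|>4$, then $o(c)>4$, so by the previous paragraph $c\notin Q_{2^n}\setminus\ang{x}$, i.e. $c\in\ang{x}$ and therefore $C\subseteq\ang{x}$. Since $Q_{2^n}$ is non-abelian, $\ang{x}$ is not contained in any strictly larger cyclic subgroup (such a subgroup would be all of $Q_{2^n}$), so $\ang{x}$ is itself maximal cyclic of order $2^{n-1}$; and for $n\ge 4$, where $2^{n-1}>4$, the containment above shows every cyclic subgroup of order $2^{n-1}$ equals $\ang{x}$, giving uniqueness.

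The step needing the most care -- and the genuine obstacle -- is the uniqueness clause at the boundary $n=3$. There $2^{n-1}=4$, and $Q_8$ in fact has three maximal cyclic subgroups of order $4$ (namely $\ang{i},\ang{j},\ang{k}$), so ``exactly one maximal cyclic subgroup of order $2^{n-1}$'' holds only after one fixes a distinguished $\ang{x}$; the clean uniqueness statement is really an assertion for $n\ge 4$, while the order-$4$ claim for outside elements holds verbatim for all $n\ge 3$. I would therefore phrase the uniqueness precisely for $n\ge4$ and treat $Q_8$ as a separate, directly verified base case, relying throughout on the unique-involution property to rule out cyclic subgroups of order $8$ or more lying outside $\ang{x}$.
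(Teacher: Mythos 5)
The paper does not prove this statement at all --- it is quoted as a fact with a citation to Conrad's notes (Theorem 4.2 there) --- so there is no internal proof to compare against; what you have written is a correct, self-contained derivation from the standard presentation. Your central computation $(x^iy)^2 = x^i(yx^i)y = x^ix^{-i}y^2 = y^2 = x^{2^{n-2}}$ is right, and it correctly yields both that every element outside $\langle x\rangle$ has order $4$ and that $x^{2^{n-2}}$ is the unique involution, which is exactly the package the paper actually needs later (in the proof of the theorem on $p$-groups and in the lemma showing $OD \geq 3$ for nilpotent groups with a quaternion Sylow $2$-subgroup). Your flag about the boundary case is a genuine catch: for $n=3$ one has $2^{n-1}=4$ and $Q_8$ has three maximal cyclic subgroups of order $4$, namely $\langle i\rangle,\langle j\rangle,\langle k\rangle$, so the clause ``exactly one maximal cyclic subgroup of order $2^{n-1}$'' is literally false there and should be read either as ``a distinguished index-$2$ cyclic subgroup'' or restricted to $n\geq 4$. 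This imprecision is harmless downstream --- the paper's quaternion argument only uses the existence of one such $\langle x\rangle$, the order-$4$ property of elements outside it, and the uniqueness of the order-$2$ subgroup, all of which hold for every $n\geq 3$ --- but your separation of the uniqueness claim ($n\geq 4$) from the order-$4$ claim (all $n\geq 3$) is the more careful statement.
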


Moreover, we use the following two statements; one is a lemma by Burnside (\Cref{unique p-subgroup of a p-group}, 1911) and another one is a result from \cite{MR3200118} in the proof of the next theorem.

\begin{lemma}\label{unique p-subgroup of a p-group}\cite{burnside1911theory}
    Let $G$ be a $p$-group for a prime $p$, which is neither cyclic nor generalized quaternion. Then $G$ has at least two subgroups of order $p$.
\end{lemma}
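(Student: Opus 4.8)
The plan is to prove the contrapositive by strong induction on $|G|$: if a $p$-group $G$ has a \emph{unique} subgroup of order $p$, then $G$ is cyclic, or $p=2$ and $G$ is generalized quaternion. The first observation is that this hypothesis passes to subgroups: any nontrivial subgroup $H \le G$ is itself a $p$-group, so it contains some subgroup of order $p$, which must be \emph{the} unique such subgroup of $G$; hence $H$ again has a unique subgroup of order $p$. In particular, every abelian subgroup of $G$ has a unique subgroup of order $p$, and by the structure theorem for finite abelian $p$-groups this forces rank $1$, i.e. cyclicity. This already settles the case where $G$ itself is abelian. By the inductive hypothesis, every \emph{proper} subgroup of $G$ is cyclic or generalized quaternion.

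For the non-abelian case I would localize the structure around a maximal normal abelian subgroup $A \trianglelefteq G$. The key reduction is the standard fact that $C_G(A) = A$: if $C_G(A)$ properly contained $A$, then the nontrivial normal subgroup $C_G(A)/A$ of the $p$-group $G/A$ would meet $Z(G/A)$ nontrivially, producing an element $x \in C_G(A) \setminus A$ with $\langle A, x\rangle$ abelian and normal in $G$, contradicting maximality of $A$. Since $A$ is abelian it is cyclic, say $A \cong \ZZ_{p^m}$, and conjugation gives an embedding $G/A = G/C_G(A) \hookrightarrow \mathrm{Aut}(\ZZ_{p^m})$. The whole argument now hinges on the well-understood structure of this automorphism group.

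For odd $p$, $\mathrm{Aut}(\ZZ_{p^m})$ is cyclic, so $G/A$ is cyclic. I claim $[G:A] \le p$. Otherwise choose $A < B < G$ with $[B:A]=p$; as a proper subgroup $B$ is cyclic (odd $p$ rules out generalized quaternion) and therefore abelian, whence $B \le C_G(A) = A$, a contradiction. Thus $A$ is a cyclic subgroup of index at most $p$, and invoking the classical classification of $p$-groups with a cyclic subgroup of index $p$ (for odd $p$: $\ZZ_{p^n}$, $\ZZ_{p^{n-1}} \times \ZZ_p$, and the modular group), together with the fact that the two non-cyclic options each contain a copy of $\ZZ_p \times \ZZ_p$ and hence more than one subgroup of order $p$, we conclude $G$ is cyclic.

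The case $p=2$ is the main obstacle, and it is where the exceptional generalized quaternion family appears. Here $\mathrm{Aut}(\ZZ_{2^m}) \cong \ZZ_2 \times \ZZ_{2^{m-2}}$ for $m \ge 3$ has rank two, so $G/A$ need not be cyclic and the clean index bound above can fail. The plan is to reduce to a cyclic subgroup of index $2$ and then run the classification of $2$-groups with such a subgroup — abelian, dihedral, semidihedral, modular, and generalized quaternion — eliminating every family except $Q_{2^n}$, since each of the others has at least two involutions and so violates the uniqueness hypothesis. The delicate points I expect to spend the most effort on are (i) showing that an index larger than $2$ (equivalently, a deeper cyclic normal core) always forces an extra involution outside $A$, via a direct computation of the orders of the elements $a^i t^j$ lying outside $A$; and (ii) ruling out the degenerate configuration in which every maximal subgroup of $G$ is generalized quaternion, which is controlled precisely by the rank-two bound on $G/A \hookrightarrow \mathrm{Aut}(\ZZ_{2^m})$ supplemented by a direct check of the small base cases $|G| \le 16$. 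Once these are in place, $G$ is generalized quaternion and the induction closes.
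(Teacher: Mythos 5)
The paper does not prove this lemma at all: it is imported verbatim from Burnside \cite{burnside1911theory}, so there is no internal argument to compare yours against. What you have written is an outline of the standard classical proof of the contrapositive, and the scaffolding is sound: uniqueness of the order-$p$ subgroup passes to subgroups, abelian subgroups are therefore cyclic, $C_G(A)=A$ for a maximal normal abelian $A$, and $G/A$ embeds in $\mathrm{Aut}(\ZZ_{p^m})$. The odd-$p$ branch is essentially complete (modulo your appeal to the classification of $p$-groups with a cyclic subgroup of index $p$, itself a classical theorem of comparable weight, but legitimately citable).

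The $p=2$ branch, however, is still only a plan, and the two points you defer are not bookkeeping --- they are where the entire difficulty of Burnside's theorem lives. Concretely, after $C_G(A)=A$ you know only that $G/A$ embeds in $\ZZ_2\times\ZZ_{2^{m-2}}$, and you never show how to ``reduce to a cyclic subgroup of index $2$.'' The missing argument is roughly this: any intermediate $B$ with $[B:A]=2$ cannot be cyclic (else $B\le C_G(A)=A$), so by induction and the index-$2$ classification it must be generalized quaternion, which forces the generator of $B/A$ to act on $A\cong\ZZ_{2^m}$ by inversion; since inversion is only one of the three involutions of $\mathrm{Aut}(\ZZ_{2^m})$ for $m\ge 3$ (the other two, $x\mapsto(2^{m-1}\pm 1)x$, produce semidihedral/modular-type extensions that contain a second involution), at most one order-$2$ subgroup of $G/A$ lifts to a quaternion $B$, which rules out $G/A$ of rank $2$; a separate order computation rules out $G/A$ cyclic of order $\ge 4$, and the cases $|G|\le 16$ are checked directly. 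Without this (or an equivalent) argument the proof does not close. So: right approach, correct partial execution, but a genuine gap remains exactly at the hard case.
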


\begin{lemma}\label{proper power graph}\cite[Corollary 2]{MR3200118}
    Let $G \in \Gprime$. Then $Pow(G)\setminus \{e\}$ is connected if and only if $G$ is either cyclic or generalized quaternion.
\end{lemma}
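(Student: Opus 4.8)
The plan is to prove both implications using the order-$p$ subgroups of $G$ as a connectivity invariant. I would prove the forward direction (connectivity $\Rightarrow$ $G$ cyclic or generalized quaternion) by contraposition. Assume $G$ is a $p$-group that is neither cyclic nor generalized quaternion; by \Cref{unique p-subgroup of a p-group} it has two distinct subgroups of order $p$, say $P_1 \neq P_2$. The central observation is that for each non-identity $x \in G$, the cyclic subgroup $\langle x \rangle$ has order $p^k$ with $k\geq 1$ and hence a \emph{unique} subgroup of order $p$ (by \Cref{gtf_CLT}), which I denote $\Phi(x)$. I would then show $\Phi$ is constant on every connected component of $Pow(G)\setminus\{e\}$: if $x,y \neq e$ are adjacent, then by the definition of the power graph one is a power of the other, say $y \in \langle x \rangle$, so $\langle y \rangle \leq \langle x \rangle$ are both cyclic $p$-groups; since the unique order-$p$ subgroup of $\langle x \rangle$ is its minimal subgroup it lies in the nontrivial subgroup $\langle y \rangle$ and is also the unique order-$p$ subgroup there, giving $\Phi(x)=\Phi(y)$. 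Choosing generators $a$ of $P_1$ and $b$ of $P_2$ yields $\Phi(a)=P_1 \neq P_2=\Phi(b)$, so $a$ and $b$ sit in distinct components and $Pow(G)\setminus\{e\}$ is disconnected.

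For the reverse direction I would handle the two cases separately. If $G$ is cyclic of prime power order, every element order is a power of $p$, so any two elements are comparable under divisibility and thus adjacent (by \Cref{remark power graph}); hence $Pow(G)$ is complete and deleting $e$ leaves a complete (in particular connected) graph. If $G=Q_{2^n}$ with $n\geq 3$, I would invoke \Cref{fact: gen quat}: there is a unique maximal cyclic subgroup $\langle x \rangle$ of order $2^{n-1}$, and every element outside it has order $4$. Let $z$ be the unique involution, which generates the unique subgroup of order $2$ and therefore lies inside $\langle x \rangle$. For any element $y$ of order $4$, the cyclic group $\langle y \rangle$ contains this unique order-$2$ subgroup, so $z \in \langle y \rangle$ and $y$ is adjacent to $z$. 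Since $\langle x \rangle \setminus \{e\}$ is a clique containing $z$, and every remaining vertex (an order-$4$ element outside $\langle x \rangle$) is adjacent to $z$, the vertex $z$ dominates $Pow(G)\setminus\{e\}$, which is therefore connected.

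The step I expect to be the main obstacle is the bookkeeping in the generalized quaternion case: verifying that the unique involution $z$ is genuinely unique and lies inside the maximal cyclic subgroup, and that every order-$4$ element has $z$ in its cyclic subgroup. This is a careful but standard consequence of the structure recorded in \Cref{fact: gen quat}. By contrast, the invariant argument driving the forward direction is the conceptual crux but is short once the ``one element is a power of the other'' reduction is in place, since it reduces the whole question to counting minimal subgroups, exactly where Burnside's \Cref{unique p-subgroup of a p-group} applies.
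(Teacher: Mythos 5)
Your proof is correct, but note that the paper does not prove this lemma at all: it is imported by citation as Corollary~2 of the reference [MR3200118], so there is no in-paper argument to compare against. Your self-contained proof is sound and uses only facts already available in the paper. The forward direction via the invariant $\Phi(x)=$ the unique order-$p$ subgroup of $\langle x\rangle$, which is constant along edges of $Pow(G)\setminus\{e\}$ and separates the two order-$p$ subgroups supplied by Burnside's lemma, is essentially the same mechanism the authors themselves deploy later (in the proof that $OD(Pow(G))\geq 3$ for non-cyclic nilpotent groups, where they observe that generators of two distinct order-$p$ subgroups have no common neighbour besides $e$). The reverse direction is also fine: the cyclic case gives a complete graph, and in the generalized quaternion case the unique involution $z$ lies in every nontrivial cyclic subgroup (it is the unique order-$2$ subgroup of $G$, since all elements outside the maximal cyclic subgroup have order $4$), so $z$ dominates $Pow(G)\setminus\{e\}$. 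Having a worked proof is a small added value over the paper's bare citation, though for the paper's purposes the citation suffices.
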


\begin{theorem}\label{od of all p-groups}
    (1) Let $G \in \Gprime$ be neither cyclic nor generalized quaternion. If $G$ has no maximal cyclic subgroup of order $2$, then the oriented diameter of $Pow(G)$ is $4$.\\ (2) The oriented diameter of $Pow(Q_{2^n})$ is $3$, where $Q_{2^n}, n\geq 3$ is the generalized quaternion group.
\end{theorem}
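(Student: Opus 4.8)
The plan is to prove the two parts separately, using the structural facts just assembled. For part (1), the lower bound $OD(Pow(G))\geq 4$ and the upper bound $OD(Pow(G))\leq 4$ (already available from \Cref{4 upper bound}, since $G$ has no maximal cyclic subgroup of order $2$) together force the value $4$. The key observation is \Cref{proper power graph}: since $G$ is neither cyclic nor generalized quaternion, the \emph{punctured} power graph $Pow(G)\setminus\{e\}$ is \textbf{disconnected}. This means that every directed path between two vertices lying in different components of $Pow(G)\setminus\{e\}$ must pass through $e$. I would fix two vertices $x,y$ in distinct components of $Pow(G)\setminus\{e\}$ (these exist because the punctured graph is disconnected and, by \Cref{unique p-subgroup of a p-group}, $G$ contains at least two subgroups of order $p$, so the components are nontrivial). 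Under any strong orientation, any directed path from $x$ to $y$ and any directed path from $y$ to $x$ must route through $e$. The argument then counts: going $x\rightarrow e$ and $e\rightarrow y$ costs at least $2$ each way, but a single vertex $e$ cannot simultaneously serve as an ``in-hub'' for everything and an ``out-hub'' for everything with all distances $\leq 2$; I would argue that some pair is forced to distance at least $4$. Concretely, pick $x,y,z$ in three (or two) components; since all traffic between components funnels through $e$, and $e$ has a fixed orientation on each incident edge, for any orientation there is a pair $u,v$ with $d(u,v)\geq 4$, giving the lower bound.

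For the lower bound the cleaner route is: let $x$ and $y$ lie in different components $A,B$ of $Pow(G)\setminus\{e\}$. In any strong orientation, $d(x,y)\leq 3$ would require a path $x\cdots e\cdots y$, and since $x,y$ are not adjacent to any common non-identity vertex across components, the path through $e$ has the form $x\to a\to e\to y$ or $x\to e\to b\to y$ with the middle hop forced, costing at least the edge into $e$ plus the edge out of $e$ plus possibly one more. I would show that whichever way the edges $\{x,e\}$ and $\{y,e\}$ are oriented, one of the two directed distances $d(x,y),d(y,x)$ is forced to exceed $3$: if the edge at $e$ toward $A$ points inward and toward $B$ points inward, then nothing leaves $e$ into $A\cup B$ along these, blocking short return paths, and a careful case analysis on the four orientation patterns of $(\{x,e\},\{y,e\})$ yields a pair at distance $\geq 4$.

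For part (2), I would again sandwich: the upper bound $OD(Pow(Q_{2^n}))\leq 3$ can be established by exhibiting an explicit orientation, and the lower bound $OD(Pow(Q_{2^n}))\geq 3$ follows because $Pow(Q_{2^n})$ is not a complete graph (by \Cref{fact: gen quat} there are elements of order $4$ outside $\langle x\rangle$ that are mutually non-adjacent) so it cannot have oriented diameter $2$, combined with the fact that its diameter is $2$. For the upper bound, I would use the rich structure from \Cref{fact: gen quat}: $Q_{2^n}$ has a unique maximal cyclic subgroup $\langle x\rangle$ of order $2^{n-1}$, and every element outside is of order $4$, with each such element together with $-1$ (the unique involution) and $e$ sitting inside $\langle x\rangle$. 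I would orient the large clique $\langle x\rangle$ using \Cref{extension} (or the $P_4$-gadget of \Cref{kn-lemma}) and then attach each order-$4$ element $g\notin\langle x\rangle$ via its neighbors $e,g^{2},g^{3}$; since $g^{2}$ is the unique involution, which lies in $\langle x\rangle$ and is dominating-like within its reach, each outside element can be reached and can reach back within $3$ steps by routing through $\langle x\rangle$.

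The main obstacle I expect is the lower-bound case analysis in part (1): proving that \emph{no} orientation achieves diameter $3$ requires ruling out every way of orienting the bottleneck edges at $e$, and the disconnectedness of $Pow(G)\setminus\{e\}$ must be leveraged precisely to show that $e$ is a genuine cut vertex through which all inter-component directed paths are forced, so that the unavoidable ``two hops in, two hops out'' structure defeats any attempt at a length-$3$ orientation. Handling the interaction between the (possibly several) components and the fixed local orientation at $e$ is the delicate part.
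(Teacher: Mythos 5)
Your treatment of part (1) is essentially the paper's argument: you invoke \Cref{proper power graph} to get that $Pow(G)\setminus\{e\}$ is disconnected, observe that every directed path between distinct components must pass through $e$, and run a case analysis on how the edges at $e$ are oriented. One caution: your sentence claiming that ``one of the two directed distances $d(x,y),d(y,x)$ is forced to exceed $3$'' is false for the chosen pair $x,y$ itself. If both $\{x,e\}$ and $\{y,e\}$ are oriented into $e$, then $d(x,y)$ and $d(y,x)$ can both equal $3$ (via $x\to e\to b\to y$ and $y\to e\to a\to x$ for suitable $a,b$); the pair at distance at least $4$ is then a \emph{different} pair, e.g.\ $(a,y)$, since $(e,a)\in\OO$ forces $d(a,e)\geq 2$ and $(y,e)\in\OO$ forces $d(e,y)\geq 2$. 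The paper's proof makes exactly this move with auxiliary vertices $d,d'$. Your weaker formulation (``there is a pair $u,v$ with $d(u,v)\geq 4$'') is the correct one, and the case analysis closes once you allow the witnessing pair to move.

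The genuine gap is the lower bound in part (2). You assert that $OD(Pow(Q_{2^n}))\geq 3$ because $Pow(Q_{2^n})$ is not a complete graph and hence ``cannot have oriented diameter $2$.'' Non-completeness only gives $OD\geq diam\geq 2$; it does not rule out oriented diameter $2$. Indeed, \Cref{cyclic_main} shows that $Pow(\ZZ_n)$ has oriented diameter $2$ for most $n$ with at least two prime factors, and those graphs are far from complete. The correct lower bound needs the structural bottleneck: $Q_{2^n}$ contains three cyclic subgroups $C_1,C_2,C_3$ of order $4$ with $C_i\cap C_j=\{e,y\}$, where $y$ is the unique involution, so every path between order-$4$ elements of distinct $C_i$'s passes through $e$ or $y$; assuming a diameter-$2$ orientation, the directions forced at $e$ and $y$ by the pair $(c_{11},c_{21})$ become incompatible with serving $c_{31}$ in both directions. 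Without an argument of this kind, your part (2) lower bound does not stand. Your upper-bound sketch (routing the outside order-$4$ elements through $e$ and the involution $g^2=y$) is in the right spirit, but note that such a $g$ has only the three neighbours $e$, $y$ and $g^{-1}$, so the attachment cannot be grafted onto an arbitrary diameter-$2$ orientation of the clique $\langle x\rangle$ (and for $n=3$ that clique is $K_4$, which admits no diameter-$2$ orientation); the paper instead constructs an explicit partial orientation with $e$ and $y$ acting as complementary in/out hubs.
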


\begin{proof}

(1)
     Let $\Gamma= Pow(G)$. Due to \Cref{4 upper bound}, it is sufficient to prove that $OD(\Gamma)\geq 4$. By \Cref{proper power graph}, we know that $\Gamma \setminus \{e\}$ is disconnected and hence $\Gamma \setminus \{e\}$ has at least two connected (A \textit{connected component} of a graph is a maximal connected subgraph of the graph.) components $C_1$ and $C_2$. So, there is no undirected $e$-avoiding path \footnote{A path in $Pow(G)$ for a group $G$ is $e$-\textit{avoiding} if it does not include the vertex corresponding to the identity element $e$ of $G$.} between a vertex of $C_1$ and a vertex of $C_2$ in the graph $\Gamma$.

   Now we prove that, for any arbitrary orientation $\OO$ of $\Gamma$, there are two vertices $u_1\in C_1$ and $u_2 \in C_2$ such that $d_{\Gamma_{\OO}}(u_1,u_2) \geq 4$ (recall $\Gamma_{\OO}$ denotes the directed graph $(V(\Gamma),\OO)$). For that, let us consider two elements $c_1 \in C_1$ and $c_2 \in C_2$. Without loss of generality, let us assume that $(c_1 ,e) \in \OO$. If $(e,c_2) \in \OO$ then $d_{\GA}(c_2,e), d_{\GA}(e,c_1) \geq 2$. Thus, we have $d_{\GA}(c_2,c_1) \geq 4$. For the other case, suppose $(c_2,e) \in \OO$. Now, to have $d_{\GA}(c_1,c_2) \leq 3$, we must have a vertex $d$ in $C_2$ such that $(e,d), (d,c_2) \in \OO$. Analogously, to have $d_{\GA}(c_2,c_1) \leq 3$, we must have a vertex $d' \in C_1$ such that $(e,d'), (d',c_1) \in \OO$. This gives us $d_{\GA}(d',e), d_{\GA}(e,c_2) \geq 2$ which implies $d_{\GA}(d',c_2) \geq 4$. So, using $\OO$, there is no directed path of length at most $3$ from $d' \in C_1$ to $c_2 \in C_2$. 

\vspace{0.3cm}

\noindent (2) \Cref{fact: gen quat} and \Cref{gtf_CLT} implies that $Q_{2^n}$ has a unique subgroup, say $\ang{y}$, of order $2$ . Since any element in $Q_{2^n}\setminus \langle x \rangle$ (see description of $x$ in \Cref{fact: gen quat}) belongs to some maximal cyclic subgroup of order $4$, there are $\frac{2^n-2^{n-1}}{\phi(4)}=2^{n-2}\geq 2$ (as $n\geq 3$) maximal cyclic subgroups of order $4$ in $Q_{2^n}$. Hence $Q_{2^n}$ has at least two maximal cyclic subgroups $C_1, C_2$ of order $4$ and one cyclic subgroup $C_3$ of order $4$ inside $\langle x \rangle$ such that the intersection $C_i \cap C_j=\{e,y\}$, $1\leq i < j \leq 3$, where $y$ is the unique element of $Q_{2^n}$ of order $2$. Let the two elements of order $4$ in $C_i$ be $c_{i1}$ and $c_{i2}$ such that $1 \leq i \leq 3$. Since $C_i \cap C_j=\{e,y\}$ for $i \neq j$, a path between a vertex $c_{ir}$, $r=1,2$ and a vertex in $c_{js}$, $s=1,2$ in $Pow(Q_{2^n})$ has to include $e$ or $y$.

    \begin{figure}[hpt!]
\centering
        \begin{tikzpicture}[scale=0.4]
        
        \coordinate (a) at (3,10);
        \coordinate (b) at (7,10);
        \coordinate (c) at (3,2);
        \coordinate (d) at (7,2);
        \coordinate (e) at (4,6);
        \coordinate (y) at (6,6);

        \draw (3,10.1) node[above][scale=0.8] {$c_{11}$};
        \draw (7,10.1) node[above][scale=0.8] {$c_{12}$};
        \draw (3.9,6) node[left][scale=0.8] {$e$};
        \draw (6.1,6) node[right][scale=0.8] {$y$};
        \draw (7,1.9) node[below][scale=0.8] {$c_{22}$};
        \draw (3,1.9) node[below][scale=0.8] {$c_{21}$};

       \draw[blue, densely dashed, thick] (e) -- (a) node[ currarrow, pos=0.5, xscale=1, sloped, scale=1] {} ;
       \draw[] (e) -- (b) node[ currarrow, pos=0.5, xscale=-1, sloped, scale=1] {} ;
       \draw[blue, densely dashed, thick] (e) -- (c) node[ currarrow, pos=0.5, xscale=-1, sloped, scale=1] {} ;
       \draw[] (e) -- (d) node[ currarrow, pos=0.5, xscale=1, sloped, scale=1] {} ;
       \draw[] (y) -- (a) node[ currarrow, pos=0.5, xscale=-1, sloped, scale=1] {} ;
       \draw[] (y) -- (b) node[ currarrow, pos=0.5, xscale=1, sloped, scale=1] {} ;
       \draw[] (y) -- (c) node[ currarrow, pos=0.5, xscale=1, sloped, scale=1] {} ;
       \draw[] (y) -- (d) node[ currarrow, pos=0.5, xscale=-1, sloped, scale=1] {} ;

       \draw (a) -- (b) ; 
       \draw (c) -- (d) ; 
       \draw (e) -- (y) ;

       \draw (5,9) node[][scale=0.8] {$C_1$};
       \draw (5,3) node[][scale=0.8] {$C_2$};

        \filldraw [black] (a) circle(5pt);
        \filldraw [black] (b) circle(5pt);
        \filldraw [black] (c) circle(5pt);
        \filldraw [black] (d) circle(5pt);
        \filldraw [black] (e) circle(5pt);
        \filldraw [black] (y) circle(5pt);

         \end{tikzpicture}
        \caption{Choosing the directed path $c_{11}ec_{21}$ forces the directed edges shown in the figure.}
         \label{fig:generalised quaternion}
\end{figure}
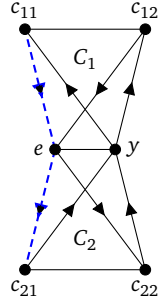
   
Let $\Gamma = Pow(Q_{2^n})$. First, we show that no orientation of $\Gamma$ has diameter $2$. For the sake of contradiction, we assume that $\OO$ is an orientation of $\Gamma$ such that the diameter of ${\Gamma_{\OO}}$ is $2$. The directed path from $c_{11}$ to $c_{21}$ of length $2$ in ${\Gamma_{\OO}}$ must include either $e$ or $y$. Without loss of generality, we assume that the path includes $e$. The case when the directed path from $c_{11}$ to $c_{21}$ of length $2$ in ${\Gamma_{\OO}}$ passes through $y$ can be dealt similarly.  Now, since the directed path from $c_{11}$ to $c_{21}$ of length $2$ in ${\Gamma_{\OO}}$ passes through $e$ (see \Cref{fig:generalised quaternion}), $(c_{11},e)$ and $ (e,c_{21})$ must be in $\OO$. In this case, the directed path from $c_{21}$ to $c_{11}$ of length $2$ has to include $y$. Hence, we must have $(c_{21},y), (y,c_{11}) \in \OO$. This also implies that $(c_{12},e), (e,c_{22}), (c_{22},y), (y,c_{12}) \in \OO$. Now, to have a directed path of length $2$ from $c_{11}$ to $c_{31}$, we need $(e,c_{31}) \in \OO$. On the other hand, to have a directed path of length $2$ from $c_{31}$ to $c_{21}$, we need $(c_{31},e) \in \OO$. This means we can have a directed path of length at most $2$ either from $c_{11}$ to $c_{31}$ or from $c_{31}$ to $c_{21}$, but not both. This contradicts the diameter of ${\Gamma_{\OO}}$ being $2$. 
   
Now, due to \Cref{obs od is at most diam of partial orient}, it is sufficient to give a partial orientation of $Pow(Q_{2^n})$ with diameter $3$. Such a partial orientation $\OO$ of $Pow(Q_{2^n})$ is shown in \Cref{fig: orientation of generalized quaternion}. In \Cref{fig: orientation of generalized quaternion}, $C_1, C_2, \dots, C_m$ denote the maximal cyclic subgroups of order $4$, where $m = 2^{n-2}$; $c_{i1}$ and $c_{i2}$ denote the elements of order $4$ in $C_i$. Note that $C_i \cap C_j=\{e,y\}$, for $1\leq i < j \leq m$ and $C_i \cap \langle x \rangle = \{e,y\}$. We partition the set $\langle x \rangle \setminus \{e,y\}$ into two arbitrary non-empty subsets $A$ and $B$ (note that $|\ang{x}| = 2^{n-1} \geq 4$). 
    
    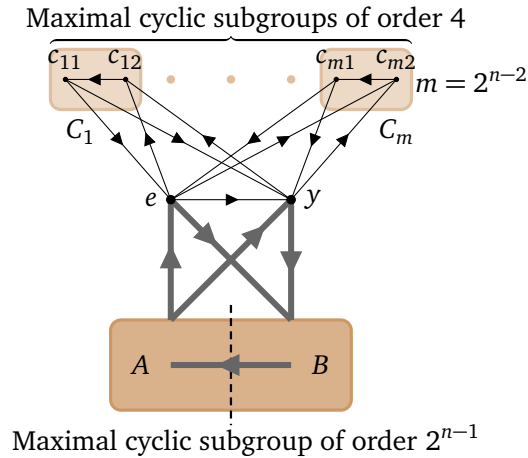
\begin{figure}[hpt!]

\centering
\begin{tikzpicture}[scale=0.4]
                


 \filldraw[color=brown!80, fill=brown!60, very thick][rounded corners] (7,2) rectangle (15,5);
 \draw (11.5,1) node[][scale=1] {Maximal cyclic subgroup of order $2^{n-1}$};
  \draw[line width=0.3 mm,densely dashed, black] (11,5.5) -- (11,1.5) ;
  \draw (8,3.5) node[][scale=1] {$A$};
  \draw (14,3.5) node[][scale=1] {$B$};

  \draw [line width=0.8mm,color=gray!120] (9,3.5) -- (13,3.5) node[ currarrow, pos=0.5, xscale=-1, sloped, scale=1.2] {} ;
  
  \draw [line width=0.8mm,color=gray!120] (9,5) -- (9,9) node[ currarrow, pos=0.5, xscale=1, sloped, scale=1.2] {} ;
  \draw [line width=0.8mm,color=gray!120] (9,5) -- (13,9) node[ currarrow, pos=0.7, xscale=1, sloped, scale=1.2] {} ;
  \draw [line width=0.8mm,color=gray!120] (9,9) -- (13,5) node[ currarrow, pos=0.3, xscale=1, sloped, scale=1.2] {} ;
  \draw [line width=0.8mm,color=gray!120] (13,9) -- (13,5) node[ currarrow, pos=0.5, xscale=1, sloped, scale=1.2] {} ;

  \filldraw[color=brown!50, fill=brown!30, very thick][rounded corners] (5,12) rectangle (8,14);
  \draw (6,12) node[below][scale=1] {$C_1$};
  \filldraw [black] (5.5,13) circle(2pt);
  \draw (5.5,13) node[above][scale=1] {$c_{11}$};
  \filldraw [black] (7.5,13) circle(2pt);
  \draw (7.5,13) node[above][scale=1] {$c_{12}$};
  
  \filldraw [color=brown!50, thick] (9,13) circle(3pt); 
  \filldraw [color=brown!50, thick] (11,13) circle(3pt);
  \filldraw [color=brown!50, thick] (13,13) circle(3pt);
  
 \filldraw[color=brown!50, fill=brown!30, very thick][rounded corners] (14,12) rectangle (17,14);
 \draw (16.5,12) node[below][scale=1] {$C_m$};
  \filldraw [black] (14.5,13) circle(2pt);
  \draw (14.5,13) node[above][scale=1] {$c_{m1}$};
  \filldraw [black] (16.5,13) circle(2pt);
  \draw (16.5,13) node[above][scale=1] {$c_{m2}$};
  \draw (19,13) node[][scale=1] {$m=2^{n-2}$};
 
 \draw (11.5,15) node[][scale=1] {Maximal cyclic subgroups of order $4$};
\draw [decorate,
    decoration = {calligraphic brace}][thick] (5,14.2) --  (17,14.2);
    
 \filldraw [black] (9,9) circle(4pt);
 \draw (8.9,9) node[left][scale=1] {$e$};
 
 \filldraw [black] (13,9) circle(4pt);
 \draw (13.1,9) node[right][scale=1] {$y$};

\draw (9,9) -- (13,9) node[ currarrow, pos=0.5, xscale=1, sloped, scale=1] {} ;
\draw (5.5,13) -- (13,9) node[ currarrow, pos=0.5, xscale=1, sloped, scale=1] {} ;
\draw (5.5,13) -- (9,9) node[ currarrow, pos=0.5, xscale=1, sloped, scale=1] {} ;
\draw (9,9) -- (7.5,13) node[ currarrow, pos=0.5, xscale=-1, sloped, scale=1] {} ;
\draw (13,9) -- (7.5,13) node[ currarrow, pos=0.5, xscale=-1, sloped, scale=1] {} ;
\draw (7.5,13) -- (5.5,13) node[ currarrow, pos=0.5, xscale=-1, sloped, scale=1] {} ;

\draw (14.5,13) -- (13,9) node[ currarrow, pos=0.5, xscale=-1, sloped, scale=1] {} ;
\draw (14.5,13) -- (9,9) node[ currarrow, pos=0.5, xscale=-1, sloped, scale=1] {} ;
\draw (9,9) -- (16.5,13) node[ currarrow, pos=0.5, xscale=1, sloped, scale=1] {} ;
\draw (13,9) -- (16.5,13) node[ currarrow, pos=0.5, xscale=1, sloped, scale=1] {} ;
\draw (16.5,13) -- (14.5,13) node[ currarrow, pos=0.5, xscale=-1, sloped, scale=1] {} ;





                \end{tikzpicture}

\caption[scale=0.7]{A partial orientation $\OO$ of $Pow(Q_{2^n})$ with diameter $3$. Here $y$ is the element of order $2$.} 
\label{fig: orientation of generalized quaternion}
\end{figure}

    We put $(e,y)$ in $\OO$. For all $a \in A$ and for all $b\in B$, we put the following directions in $\OO$: $(b,a)$, $(y,b)$, $(e,b)$, $(a,e)$, $(a,y)$. Moreover, we put the following directions in $\OO$: $(c_{i1},e)$, $(c_{i1},y)$, $(c_{i2},c_{i1})$, $(e, c_{i2})$, $(y, c_{i2})$, for each $i$, $1\leq i \leq 2^{n-2}$. From \Cref{fig: orientation of generalized quaternion}, it is easy to observe that the diameter of $Pow(Q_{2^n})$ using $\OO$ is $3$.
\end{proof}

\section{ Oriented Diameter of Power Graphs of Nilpotent Groups}\label{section: nilpotent }

 Since in the previous section we have dealt with non-cyclic finite groups of $\Gprime$, in this section, we only consider finite non-cyclic nilpotent groups $G$ such that $G \notin \Gprime$. We fully characterize the oriented diameter of power graphs of all such groups in the main result (see \Cref{nil: main result}) of this section. We write $\pi(G)$ to denote the set of all prime divisors of $|G|$. We start our discussion with a fact about finite nilpotent groups, which follows from two group theoretic facts: \Cref{co-prime orders commute} and \Cref{nilpotent commute} (see \ref{ext preli}).

\begin{fact}\label{nilpotent}
     Let $G$ be a finite nilpotent group and $x,y \in G \setminus \{e\}$ be two elements such that $o(x)$ and $o(y)$ are co-prime to each other. Then $o(xy)=o(x)\cdot o(y)$. Moreover, if $M$ is any maximal cyclic subgroup of a finite non-cyclic nilpotent group $G$, then $p$ divides $|M|$ for all $p \in \pi(G)$.
\end{fact}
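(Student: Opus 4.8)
The plan is to treat the two assertions separately, deriving the second from the first. For the first assertion, the key point is that the coprimality of $o(x)$ and $o(y)$ forces $x$ and $y$ to commute in the nilpotent group $G$, and that once commutativity is in hand one can control the order of the product. Both ingredients are supplied by the two cited facts, \Cref{co-prime orders commute} and \Cref{nilpotent commute}: one of them gives that elements of coprime order in a finite nilpotent group commute, and the other gives that commuting elements of coprime order satisfy $o(xy) = o(x) \cdot o(y)$. If one prefers a self-contained argument for the order computation, it can be recovered directly once commutativity is known: writing $n = o(xy)$, the relation $(xy)^n = x^n y^n = e$ gives $x^n = y^{-n} \in \ang{x} \cap \ang{y}$; since $|\ang{x} \cap \ang{y}|$ divides both $o(x)$ and $o(y)$ it divides $\gcd(o(x), o(y)) = 1$, so $x^n = y^n = e$. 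Hence $o(x) \mid n$ and $o(y) \mid n$, and coprimality yields $o(x)o(y) \mid n$, while the reverse divisibility $n \mid o(x)o(y)$ is immediate from $(xy)^{o(x)o(y)} = e$.

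For the second assertion I would argue by contradiction. Suppose some prime $p \in \pi(G)$ does \emph{not} divide $|M|$, and write $M = \ang{m}$, so that $o(m) = |M|$. Since $p \in \pi(G)$, Cauchy's theorem (applied to the Sylow $p$-subgroup) produces an element $z \in G$ with $o(z) = p$. Because $p \nmid |M|$, the orders $o(z) = p$ and $o(m) = |M|$ are coprime, so the first assertion applies and gives $o(zm) = p \cdot |M|$. The crux is then to show that $M$ is \emph{properly} contained in the cyclic group $\ang{zm}$: since $z$ and $m$ commute and have coprime orders, $\ang{z}\ang{m} = \ang{z} \times \ang{m}$ is a subgroup of order $p \cdot |M|$ containing $zm$, so $\ang{zm} \subseteq \ang{z} \times \ang{m}$; comparing orders forces $\ang{zm} = \ang{z} \times \ang{m} \supseteq \ang{m} = M$. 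As $o(zm) = p|M| > |M|$, this containment is strict, contradicting the maximality of the cyclic subgroup $M$. Therefore every $p \in \pi(G)$ divides $|M|$.

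The step I expect to require the most care is the proper-containment claim $M \subsetneq \ang{zm}$: knowing merely that $\ang{zm}$ has larger order than $M$ is not enough, and one genuinely needs the coprime-order decomposition $\ang{zm} = \ang{z} \times \ang{m}$ to see that the generator $m$ of $M$ actually lies in $\ang{zm}$. Everything else is routine: commutativity comes for free from the cited nilpotency fact, the order computation is the first assertion, and the existence of an element of order $p$ is Cauchy's theorem. Note that the non-cyclic hypothesis on $G$ plays no essential role in this argument and is merely inherited from the ambient setting of this section.
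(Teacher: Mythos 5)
Your proof is correct and follows essentially the route the paper intends: the first assertion is exactly the combination of \Cref{nilpotent commute} (coprime orders commute in a nilpotent group) with \Cref{co-prime orders commute} (the order of a product of commuting elements of coprime order), and your maximality contradiction for the second assertion, via $\ang{zm}=\ang{z}\times\ang{m}\supsetneq M$, is the natural argument the paper leaves implicit. Your observation that the non-cyclic hypothesis is not actually needed is also accurate.
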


We now classify the non-trivial GE-classes (defined in \Cref{preli}) of a nilpotent group $G$ into two types based on their orders.

 \textit{Base class}: We call a GE-class $[x]$ with order $o(x)$ divisible by exactly one prime from $\pi(G)$ a \textit{base class} and its elements \emph{base elements}. A base element with the order as a positive power of a prime $p \in \pi(G)$ is called a $p$-base element. We denote the set of all base elements by $B$ and the set of all $p$-base elements by $B_p$ for a prime $p\in \pi(G)$.

 \textit{Non-base class}: We call a GE-class with order divisible by at least two primes from $\pi(G)$ a \textit{non-base class} and its elements \emph{non-base elements}. We denote the set of all non-base elements by $NB$.

In finite nilpotent groups, if $[x]$ and $[y]$ are two GE-classes of order $p^k$ and $q^l$, respectively, where $p$ and $q$ are distinct primes, then $[xy]$ is the GE-class of order $p^kq^l$ and $[xy]=gen(\langle xy \rangle)$ (using \Cref{nilpotent}). Moreover, by \Cref{adjacency of classes}, $[xy]$ is adjacent to both $[x]$ and $[y]$. By \Cref{gtf_CLT}, it can be easily observed that a non-base class of order $n$ is adjacent to exactly one base class of order $p^i$, $i\geq 1$ where $p$ is a prime and $p^i$ is a divisor of $n$.
 
\vspace{0.2cm}

The next lemma is similar to Theorem 2.6 \cite{doostabadi}.

\begin{lemma}\label{nil: path of length 4}
Let $G$ be a non-cyclic nilpotent group with $|G|=p^mq^n$, where $p$ and $q$ are distinct primes\footnote{Note that the condition on $|G|$ in \Cref{nil: path of length 4} is not necessary, but it is enough for our further discussion and makes the presentation simpler.} 
 and $m,n \geq 1$.  Let $u,v \in G \setminus \{e\}$ such that $\langle u \rangle \cap \langle v \rangle =\{e \}$ satisfying one of the following conditions: (i) Both $u$ and $v$ are $p$-base elements; (ii) Both $u$ and $v$ are $q$-base elements; (iii) Both $u$ and $v$ are non-base elements. Then, any $e$-avoiding shortest path between $u$ and $v$ in $Pow(G)$ is of length $4$.
\end{lemma}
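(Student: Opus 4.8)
The plan is to recast adjacency in $Pow(G)$ in terms of the subgroup lattice and then argue via $p$- and $q$-parts. Since $G$ is nilpotent with $|G|=p^mq^n$, write $G=P\times Q$ for its Sylow decomposition, so every $w\in G$ factors as $w=w_pw_q$ with $w_p\in P$, $w_q\in Q$ and $\langle w\rangle=\langle w_p\rangle\times\langle w_q\rangle$. The first observation is that two distinct vertices $x,y$ are adjacent in $Pow(G)$ if and only if $\langle x\rangle$ and $\langle y\rangle$ are comparable (one contains the other); hence an $e$-avoiding path corresponds to a sequence of non-trivial cyclic subgroups in which consecutive ones are comparable, and along a \emph{shortest} such path consecutive subgroups are distinct and strictly comparable. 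In particular, as $\langle u\rangle\cap\langle v\rangle=\{e\}$ with both non-trivial, $\langle u\rangle$ and $\langle v\rangle$ are incomparable, so $u,v$ are non-adjacent and every $e$-avoiding $u$–$v$ path has length $\ge 2$.

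For the lower bound I would, for each prime $r\in\{p,q\}$ and each cyclic subgroup $C$ with $r\mid|C|$, track the unique order-$r$ subgroup $Z^r_C$ inside the (cyclic) $r$-part of $C$. The key local fact is: \emph{if $x\sim y$ in $Pow(G)$ and $r$ divides both $o(x)$ and $o(y)$, then $Z^r_{\langle x\rangle}=Z^r_{\langle y\rangle}$}; indeed comparability of $\langle x\rangle,\langle y\rangle$ forces comparability of their $r$-parts, and in a cyclic $r$-group every non-trivial subgroup contains the unique minimal subgroup, so the two $r$-parts share their order-$r$ socle. Let $R$ be the set of primes dividing both $o(u)$ and $o(v)$, so $R=\{p\},\{q\},\{p,q\}$ in cases (i), (ii), (iii) respectively; from $\langle u\rangle\cap\langle v\rangle=\{e\}$ one gets $Z^r_{\langle u\rangle}\neq Z^r_{\langle v\rangle}$ for every $r\in R$.

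These two ingredients rule out short paths uniformly. For distance $2$, a non-trivial common neighbour $C$ of $\langle u\rangle,\langle v\rangle$ falls into four comparability patterns; every pattern except $\langle u\rangle,\langle v\rangle\subseteq C$ forces $\langle u\rangle\cap\langle v\rangle\neq\{e\}$ (or $C=\{e\}$), while the pattern $\langle u\rangle,\langle v\rangle\subseteq C$ forces $Z^r_{\langle u\rangle}=Z^r_{\langle v\rangle}$ for $r\in R$ by the socle fact — all contradictions. For distance $3$, take a path $\langle u\rangle\sim C_1\sim C_2\sim\langle v\rangle$ with $C_1,C_2$ non-trivial; say $C_1\subseteq C_2$ (the case $C_2\subseteq C_1$ is symmetric in $u,v$), so every prime dividing $|C_1|$ divides $|C_2|$. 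One checks that some $r\in R$ divides $|C_1|$: this is immediate in case (iii), where $R=\{p,q\}$, and in cases (i)/(ii) it holds because a neighbour of the ``pure'' subgroup $\langle u\rangle$ cannot be pure in the other prime, since a non-trivial $p$-group and a non-trivial $q$-group are never comparable. Then $r$ divides $o(u)$, $|C_1|$, $|C_2|$ and $o(v)$, so three applications of the socle fact give $Z^r_{\langle u\rangle}=Z^r_{C_1}=Z^r_{C_2}=Z^r_{\langle v\rangle}$, contradicting $r\in R$. Hence the distance is at least $4$. I expect this distance-$3$ step — in particular verifying that a prime of $R$ survives across the middle edge and selecting it correctly — to be the main obstacle, as it is exactly where the base/non-base hypotheses enter.

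For the matching upper bound I would exhibit an explicit $e$-avoiding path of length $4$. In case (i) (both pure $p$), fixing any $y\in Q\setminus\{e\}$, the subgroups $\langle u\rangle,\ \langle u\rangle\times\langle y\rangle,\ \langle y\rangle,\ \langle v\rangle\times\langle y\rangle,\ \langle v\rangle$ are non-trivial, consecutively comparable, and pairwise distinct, so lifting to generators yields a length-$4$ $e$-avoiding path; case (ii) is symmetric. In case (iii), writing $u=u_pu_q$ and $v=v_pv_q$, the subgroups $\langle u\rangle,\ \langle u_p\rangle,\ \langle u_p\rangle\times\langle v_q\rangle,\ \langle v_q\rangle,\ \langle v\rangle$ serve the same purpose, distinctness following from $\langle u_p\rangle\neq\langle v_p\rangle$ and $\langle u_q\rangle\neq\langle v_q\rangle$, which are consequences of $\langle u\rangle\cap\langle v\rangle=\{e\}$. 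Combining the two bounds shows that the shortest $e$-avoiding $u$–$v$ path has length exactly $4$.
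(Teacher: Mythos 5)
Your proof is correct, and its engine is the same as the paper's: the observation that two adjacent vertices whose orders share a prime $r$ must contain the \emph{same} subgroup of order $r$ (your ``socle fact''; the paper's \Cref{e-avoiding path} chains exactly this along the path to contradict $\langle u\rangle\cap\langle v\rangle=\{e\}$). The packaging differs. The paper first proves that every $e$-avoiding $u$--$v$ path must contain both a $p$-base and a $q$-base element, then gets length $\geq 4$ in one line from the non-adjacency of elements of order $p^\al$ and $q^\be$; you instead run a direct case analysis on putative paths of length $2$ and $3$, verifying in each configuration that the socle invariant propagates from $\langle u\rangle$ to $\langle v\rangle$. Both work; the paper's route localizes the case analysis into a single structural claim, while yours avoids introducing the base/non-base vocabulary into the argument at the cost of checking more comparability patterns by hand. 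One genuine point in your favour: the lemma asserts the shortest $e$-avoiding path \emph{is of length $4$}, and the paper's proof only establishes the lower bound ``at least $4$'' (which is all that is ever used downstream, e.g.\ in \Cref{Fat Arrow Lemma}); your explicit length-$4$ paths $u,\,uy,\,y,\,vy,\,v$ and $u,\,u_p,\,u_pv_q,\,v_q,\,v$ supply the matching upper bound and make the statement literally true as written.
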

\begin{proof}
    
    We first prove the following claim.
    \begin{claim}\label{e-avoiding path}
    Let $P$ be any $e$-avoiding path between $u$ and $v$ in $Pow(G)$. Then $P$ (including $u$ and $v$) must contain one $p$-base element and one $q$-base element. 
\end{claim}

\noindent\textit{Proof of \Cref{e-avoiding path}:} Let $a,b \in G$ be two adjacent elements in $Pow(G)$. If there exists a prime $p$ which divides both $o(a)$ and $o(b)$, then $\langle a \rangle \cap \langle b \rangle$ contains a $p$-order subgroup (because $\ang{a} \cap \ang{b}$ is a cyclic subgroup and \Cref{gtf_CLT} holds). Now, let $P: ug_1g_2\dots g_n v$ be an $e$-avoiding path between $u$ and $v$. For the sake of contradiction, assume that every vertex of $P$ has its order divisible by $p$. Then, $\langle u \rangle \cap \langle g_1 \rangle \cap \dots \cap \langle g_n \rangle \cap \langle v \rangle$ contains a $p$-order subgroup, which contradicts the fact that $\langle u \rangle \cap \langle v \rangle=\{e\}$. So, there is at least one $q$-base element in $P$. Similarly, we can say that $P$ contains at least one $p$-base element.\hfill $\lhd$


We now go over the conditions of \Cref{nil: path of length 4} one by one.

(i) Let $o(u)=p^{\alpha}$, $\al \geq 1$ and $o(v)=p^{\alpha '}$, $\al' \geq 1$. From \Cref{e-avoiding path}, any $e$-avoiding path between $u$ and $v$ in $Pow(G)$ contains at least one element $a$ of order $q^{\beta}$, where $\beta \geq 1$. Now by \Cref{remark power graph}, $a$ is not adjacent to either $u$ or $v$ in $Pow(G)$. Hence, any shortest $e$-avoiding path between $u$ and $a$ is of length at least $2$, and similarly, any shortest $e$-avoiding path between $a$ and $v$ is of length at least $2$.

(ii) The proof is similar to (i).

(iii) Let  $o(u)=p^{\alpha}q^{\beta}$, $\al,\be \geq 1$ and $o(v)=p^{\alpha '}q^{\beta'}$, $\al',\be' \geq 1$. From \Cref{e-avoiding path}, an $e$-avoiding path $P$ between $u$ and $v$ in $Pow(G)$, contains at least one element, say $a$, of order $p^{r}$, where $r \geq 1$ and one element, say $b$, of order $q^{r'}$, where $r'\geq 1$. Now, an $e$-avoiding path between $a$ and $b$ is of length at least $2$ (since $a$ and $b$ are not adjacent in $Pow(G)$ by \Cref{remark power graph}). 
So, the length of $P$ is at least $4$.
\end{proof}

The next lemma is crucial in cutting down the number of patterns for showing the lower bound of the oriented diameter of power graphs of nilpotent groups that are considered in \Cref{nil: OD strictly greater than 3}. 

\begin{lemma}\label{Fat Arrow Lemma}\textbf{(Uniformity lemma)}
    Let $G$ be a non-cyclic nilpotent group such that $|G|=p^mq^n$, where $p$ and $q$ are distinct primes and $m,n \geq 1$. Let $u,v \in G\setminus \{e\}$ such that $\langle u \rangle \cap \langle v \rangle = \{e\} $ satisfying one of the following conditions:
    (i) Both $u$ and $v$ are $p$-base elements ; 
    (ii) Both $u$ and $v$ are $q$-base elements ;
    (iii) Both $u$ and $v$ are non-base elements .
    In an orientation $\OO$ of $Pow(G)$ with diameter $3$, if $(u,e) \in \OO$ then $(v,e) \in \OO$. Also, if $(e,u) \in \OO$ then $(e,v) \in \OO$.
\end{lemma}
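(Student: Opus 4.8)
The plan is to argue by contradiction, with \Cref{nil: path of length 4} doing all the real work: under the stated hypotheses it guarantees that every $e$-avoiding path between $u$ and $v$ in $Pow(G)$ has length at least $4$. Throughout, set $\Gamma = Pow(G)$ and recall that $e$ is a dominating vertex, so both $\{u,e\}$ and $\{v,e\}$ are edges of $\Gamma$ and each receives a direction in $\OO$. Note also that $\langle u\rangle \cap \langle v\rangle=\{e\}$ forces $u\neq v$ and $u,v\neq e$.

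First I would prove the implication ``$(u,e)\in\OO \Rightarrow (v,e)\in\OO$''. Suppose instead $(u,e)\in\OO$ but the edge $\{v,e\}$ is oriented the other way, so $(e,v)\in\OO$. I would bound $d_{\GA}(v,u)$ from below by splitting on whether a shortest directed $v$-to-$u$ path in $\GA$ uses $e$. If it avoids $e$, then it is in particular an undirected $e$-avoiding path between $u$ and $v$, so by \Cref{nil: path of length 4} its length is at least $4$. If it passes through $e$, I would split it at $e$: the segment from $v$ to $e$ has length at least $2$, since the only length-$1$ possibility would be the arc $v\to e$, which is excluded by $(e,v)\in\OO$; symmetrically the segment from $e$ to $u$ has length at least $2$, the arc $e\to u$ being excluded by $(u,e)\in\OO$. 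In either case $d_{\GA}(v,u)\geq 4$, contradicting $diam(\GA)=3$. Hence $(v,e)\in\OO$.

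The second implication ``$(e,u)\in\OO \Rightarrow (e,v)\in\OO$'' is the reverse situation, and I would dispatch it by the same case analysis applied to $d_{\GA}(u,v)$. Assuming $(e,u)\in\OO$ and, for contradiction, $(v,e)\in\OO$, any $e$-avoiding $u$-to-$v$ path has length $\geq 4$ by \Cref{nil: path of length 4}, while any path through $e$ decomposes into a $u$-to-$e$ segment of length $\geq 2$ (the arc $u\to e$ being forbidden by $(e,u)\in\OO$) and an $e$-to-$v$ segment of length $\geq 2$ (the arc $e\to v$ being forbidden by $(v,e)\in\OO$); again the total length is $\geq 4$, contradicting $diam(\GA)=3$.

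The conceptual content is entirely concentrated in \Cref{nil: path of length 4}; once the $e$-avoiding lower bound of $4$ is in hand, the remainder is an elementary split on whether a short directed path routes through $e$. The only place demanding care—and the step I would write most precisely—is the observation that a length-$\leq 3$ directed path between $u$ and $v$ \emph{must} pass through $e$ (any other route is $e$-avoiding, hence already too long), together with the two blocked-arc facts: the single direction forced on each of $\{u,e\}$ and $\{v,e\}$ makes exactly one of the two incident arcs unavailable, which is what pushes each $e$-segment up to length $\geq 2$.
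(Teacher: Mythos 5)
Your proposal is correct and follows essentially the same argument as the paper: both reduce to the observation (via \Cref{nil: path of length 4}) that any directed path of length at most $3$ between $u$ and $v$ must pass through $e$, and then note that the assumed orientations of $\{u,e\}$ and $\{v,e\}$ force each of the two segments through $e$ to have length at least $2$, giving a total of at least $4$ and a contradiction. No substantive differences.
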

\begin{proof}
    Using \Cref{nil: path of length 4}, an undirected path between $u$ and $v$ of length at most $3$ in $Pow(G)$ must include identity $e$. So, $e$ must be included in any directed path between $u$ and $v$ using $\OO$. We prove by contradiction that if $(u,e) \in \OO$ then $(v,e) \in \OO$. Assume $(u,e) \in \OO$ but $(v,e) \notin \OO$. Now neither $(v,e)$ nor $ (e,u)$ are in $\OO$,  hence both directed paths from $v$ to $e$ and $e$ to $u$ are of length at least $2$, making the length of any directed path from $v$ to $u$ at least  $4$. This contradicts that $\OO$ is an orientation of diameter $3$. So, $(u,e) \in \OO$ implies $(v,e) \in \OO$. The reverse case is similar.
\end{proof}

\begin{lemma}\label{nil: od not 2}
    For a finite non-cyclic nilpotent group $G \notin \Gprime$,  the oriented diameter of $Pow(G)$ is at least $3$.
\end{lemma}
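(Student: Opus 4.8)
The plan is to show that no orientation of $Pow(G)$ has diameter $2$; combined with \Cref{4 upper bound} this confines $OD(Pow(G))$ to $\{3,4\}$ and in particular gives the claimed lower bound. Since $G$ is nilpotent it is the direct product of its Sylow subgroups, and since $G$ is non-cyclic at least one Sylow subgroup $P$ is non-cyclic (a direct product of cyclic groups of pairwise coprime orders is cyclic). I would split the argument according to the structure of the non-cyclic Sylow subgroups.

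\textbf{Case 1: some non-cyclic Sylow $p$-subgroup $P$ is not generalized quaternion.} By \Cref{unique p-subgroup of a p-group}, $P$ then has two distinct subgroups of order $p$; choose generators $u,v$ of two of them. These are $p$-base elements of $G$ with $\langle u\rangle\cap\langle v\rangle=\{e\}$, hence non-adjacent in $Pow(G)$. The key step is to verify that $e$ is their \emph{only} common neighbour. Indeed, if $w\neq e$ is adjacent to both, then for each of $u,v$ either $w$ lies in its cyclic group or it lies in $\langle w\rangle$; checking the four combinations, the case $w\in\langle u\rangle\cap\langle v\rangle$ forces $w=e$, the mixed cases force $u\in\langle v\rangle$ or $v\in\langle u\rangle$ (impossible by triviality of the intersection), and the case $u,v\in\langle w\rangle$ forces the two distinct order-$p$ subgroups $\langle u\rangle,\langle v\rangle$ to coincide inside the cyclic group $\langle w\rangle$, a contradiction. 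Now in any orientation $\OO$ of $Pow(G)$ a directed path of length at most $2$ from $u$ to $v$ must be $u\to e\to v$, and one from $v$ to $u$ must be $v\to e\to u$; together these require both $(u,e)$ and $(e,u)$ to lie in $\OO$, which is impossible. Hence in every orientation at least one of these two distances exceeds $2$, so the diameter is at least $3$.

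\textbf{Case 2: every non-cyclic Sylow subgroup is generalized quaternion.} As generalized quaternion groups are $2$-groups, this forces $G\cong Q_{2^n}\times C$ with $n\ge 3$ and $C$ a cyclic group of odd order, non-trivial because $G\notin\Gprime$. Here the mechanism of Case 1 fails: by \Cref{fact: gen quat} the factor $Q_{2^n}$ has a unique involution $y$, so every even-order cyclic subgroup of $G$ contains $(y,e_C)$ while every odd-prime part lies in a cyclic Sylow subgroup; consequently no two same-type elements have trivially intersecting cyclic groups and no non-adjacent pair has $e$ as its sole common neighbour. Instead I would reuse the obstruction from \Cref{od of all p-groups}(2). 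Pick three cyclic subgroups $D_i=C_i\times\{e_C\}$ of order $4$, where $C_1,C_2,C_3\le Q_{2^n}$ are distinct order-$4$ subgroups pairwise intersecting in $\langle y\rangle$ (these exist for $n\ge 3$ exactly as in the proof of \Cref{od of all p-groups}(2)). Writing $Y=(y,e_C)$, the crucial claim is that any two order-$4$ elements taken from distinct $D_i,D_j$ have exactly $\{e,Y\}$ as common neighbours: the same four-case check as above shows that a common neighbour either lies in the intersection $\{e,Y\}$ or forces the non-cyclic group $\langle a,b\rangle$, generated by order-$4$ elements of distinct maximal cyclic subgroups of $Q_{2^n}$, to embed in a cyclic group. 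With $e$ and $Y$ playing the roles of $e$ and $y$, the verbatim chain of forced orientations on $D_1,D_2,D_3$ from \Cref{od of all p-groups}(2) yields the same contradiction, so again no orientation has diameter $2$.

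The main obstacle is Case 2. The clean ``unique common neighbour $e$'' argument that handles every other non-cyclic nilpotent group collapses exactly when the Sylow $2$-subgroup is generalized quaternion, because the unique involution $y$ becomes a second, unavoidable hub. The real work there is to confirm that adjoining the cyclic odd-order factor $C$ creates no new common neighbours among the order-$4$ elements of the quaternion factor, so that the two-hub obstruction transfers intact from $Pow(Q_{2^n})$ to $Pow(Q_{2^n}\times C)$.
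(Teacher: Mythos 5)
Your proposal is correct and follows essentially the same route as the paper: split on whether the non-cyclic Sylow subgroup is generalized quaternion, use Burnside's lemma (\Cref{unique p-subgroup of a p-group}) to produce two order-$p$ subgroups whose generators have $e$ as their only common neighbour in the first case, and transfer the two-hub ($e$ and $y$) obstruction from \Cref{od of all p-groups}(2) in the quaternion case. Your write-up is somewhat more explicit than the paper's (which leaves both the ``only common neighbour'' checks and the forced-orientation chain as claims to be verified), but the underlying argument is identical.
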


\begin{proof}
    A finite nilpotent group is a direct product of its Sylow subgroups. As $G$ is non-cyclic, at least one such Sylow subgroup, say Sylow $p$-subgroup $S_p$ is non-cyclic. 

    If $S_p$ is not also generalised quaternion, then using \Cref{unique p-subgroup of a p-group}, $S_p$ has at least two subgroups $P_1$ and $P_2$ of order $p$. Let $P_1=\ang{g_1}$ and $P_2=\ang{g_2}$. Now using \Cref{gtf_CLT}
    one can verify that $g_1$ and $g_2$ have no common neighbour other than $e$ in $Pow(G)$. Hence, $Pow(G)$ cannot have an orientation of diameter $2$.

    Now, let $S_p=S_2=Q_{2^n}$, $n \geq 3$. From the proof of (2) of \Cref{od of all p-groups}, we know that $S_2$ has at least three distinct cyclic subgroups $C_1,C_2,C_3$ of order $4$ such that $C_i \cap C_j =\{e,y\}$ where $1 \leq i < j \leq 3$ and $y$ is the unique element of order $2$ in $S_2$. Now, similar to the proof of (2) of \Cref{od of all p-groups}, we can argue that $e$ and $y$ are the only common neighbours of any vertex of $C_i$ and any vertex of $C_j$ in $Pow(G)$ and $OD(Pow(G))$ is at least $3$.
\end{proof}

We now characterise the nilpotent groups for which an oriented diameter of $3$ is not possible (see \cref{nil: OD strictly greater than 3}). For that, we use the following notations.

\textit{Subset Notations:}\label{subset construction}
Let $G$ be a non-cyclic nilpotent group and $|G|=2^mp^n$, where $p$ is an odd prime and $m,n \geq 1$. We use the notations $[x_1],[x_2],\dots,[x_r]$ to denote the GE-classes of order $2$ and the notations $[y_1],[y_2],\dots,[y_s]$ to denote the GE-classes of order $p$. We partition the set $B_2$ of $2$-base elements into sets $X_i$, $1 \leq i \leq r$ where  $X_i=\{u | \ u \in B_2 \text{ and } \  [x_i] \subseteq \UG\}$. Similarly, we partition the set $B_p$ of $p$-base elements into $Y_1,Y_2, \ldots, Y_s$. 
We partition the set of non-base elements of $G$ into $rs$ sets as follows: $A_{ij}=\{u | \ [x_i] \subseteq \UG \text{ and } [y_j] \subseteq \UG \}$ for $1 \leq i \leq r$ and $1 \leq j \leq s$. 

 The following fact can be verified using \Cref{gtf_CLT} and the fact that the intersection of two cyclic subgroups is also a cyclic subgroup. Moreover, it is used to prove \Cref{nil: OD strictly greater than 3}.
\begin{fact}\label{fact: about Aij}
    Let $i\neq i'$, $j \neq j'$. If $u\in A_{ij}, v \in A_{i'j'}$, then $\ang{u} \cap \ang{v}=\{e\}$.
\end{fact}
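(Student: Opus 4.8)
The plan is to argue by contradiction, exploiting that the intersection of two cyclic subgroups is itself cyclic, together with the uniqueness of subgroups of each order in a cyclic group (\Cref{gtf_CLT}). Set $D=\langle u\rangle \cap \langle v\rangle$. Being an intersection of cyclic subgroups, $D$ is cyclic, and $|D|$ divides $|G|=2^mp^n$. Suppose for contradiction that $D\neq\{e\}$; then at least one of the primes $2$ or $p$ divides $|D|$.

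First I would unpack what membership in $A_{ij}$ says about the subgroup structure of $\langle u\rangle$. Since $u$ is a non-base element, $o(u)$ is divisible by both $2$ and $p$, so by \Cref{gtf_CLT} the cyclic group $\langle u\rangle$ contains exactly one subgroup of order $2$ and exactly one subgroup of order $p$. The condition $[x_i]\subseteq\langle u\rangle$ forces this unique order-$2$ subgroup to be $\langle x_i\rangle$, and likewise $[y_j]\subseteq\langle u\rangle$ forces the unique order-$p$ subgroup to be $\langle y_j\rangle$. The analogous statement holds for $v\in A_{i'j'}$, giving $\langle x_{i'}\rangle$ and $\langle y_{j'}\rangle$ as the unique order-$2$ and order-$p$ subgroups of $\langle v\rangle$.

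Then I would split into two cases according to which prime divides $|D|$. If $2\mid |D|$, the cyclic group $D$ has a unique subgroup $T$ of order $2$; since $T\leq D\leq \langle u\rangle$ and $\langle u\rangle$ has a unique order-$2$ subgroup, $T=\langle x_i\rangle$, and since $T\leq \langle v\rangle$ we also get $T=\langle x_{i'}\rangle$. Hence $\langle x_i\rangle=\langle x_{i'}\rangle$, i.e.\ $[x_i]=[x_{i'}]$ and $i=i'$, contradicting $i\neq i'$. The case $p\mid |D|$ is entirely symmetric: the unique order-$p$ subgroup of $D$ equals both $\langle y_j\rangle$ and $\langle y_{j'}\rangle$, forcing $j=j'$ and contradicting $j\neq j'$. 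Since one of the two cases must occur, we conclude $D=\{e\}$.

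I do not anticipate a genuine obstacle here; the only point requiring care is justifying that a non-trivial cyclic $D$ must contain a subgroup of prime order $2$ or $p$ (immediate from $|D|\mid 2^mp^n$) and then applying uniqueness of prime-order subgroups on both sides of the intersection. Everything rests on \Cref{gtf_CLT} and the elementary fact that an intersection of cyclic subgroups is cyclic.
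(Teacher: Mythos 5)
Your proof is correct and follows exactly the route the paper indicates: the paper does not write out a proof of this fact but states that it "can be verified using \Cref{gtf_CLT} and the fact that the intersection of two cyclic subgroups is also a cyclic subgroup," and your argument is precisely that verification. The case split on which prime divides $|\langle u\rangle\cap\langle v\rangle|$ and the identification of the unique prime-order subgroups with $\langle x_i\rangle$ and $\langle y_j\rangle$ are the intended details, and both hypotheses $i\neq i'$ and $j\neq j'$ are used where they should be.
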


The following two statements, one an easy observation about power graphs and the other a lemma by Frobenius (1895) \cite{frobenius1895verallgemeinerung}, are used in the proof of \Cref{nil: OD strictly greater than 3}.

\begin{observation}
\label{pgtf_p}
     Let $G$ be a group and $x$ be an element of order $p$, where $p$ is any prime. Then, for any element $y \ (\neq e) \in G$ such that $\{x,y\}$ is an edge of $Pow(G)$, we have $x \in \ang{y}$. 
\end{observation}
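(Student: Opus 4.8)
The plan is to split into the two ways in which an edge of $Pow(G)$ can arise according to \Cref{def: power graph}. Since $Pow(G)$ is undirected, the hypothesis that $\{x,y\}$ is an edge means precisely that $x = y^m$ for some integer $m$, \emph{or} $y = x^m$ for some integer $m$. In the first case the conclusion is immediate: $x = y^m$ says exactly that $x \in \ang{y}$, and there is nothing more to prove. So the only case requiring an argument is $y = x^m$, i.e.\ $y \in \ang{x}$.

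In that remaining case I would invoke the hypothesis that $o(x) = p$ is prime. The subgroup $\ang{x}$ is then cyclic of prime order $p$, so by \Cref{gtf_CLT} (or simply by Lagrange's theorem, since the order of any element must divide $p$) every non-identity element of $\ang{x}$ has order $p$ and therefore generates $\ang{x}$. As $y \neq e$ and $y \in \ang{x}$, this forces $\ang{y} = \ang{x}$; in particular $x \in \ang{x} = \ang{y}$, which is the desired conclusion.

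There is essentially no real obstacle here, as the statement is an elementary observation about power graphs. The only two points deserving care are: first, that the edge relation in $Pow(G)$ is symmetric, so both possibilities (namely $x$ being a power of $y$ and $y$ being a power of $x$) must be handled; and second, the key arithmetic fact that in a group of prime order every non-identity element is a generator, which is exactly what converts ``$y$ is a power of $x$'' back into ``$x$ is a power of $y$.''
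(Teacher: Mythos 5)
Your proof is correct, and since the paper states this as an observation without supplying an argument, your write-up is exactly the intended elementary justification: split on which element is a power of which, and in the case $y \in \ang{x}$ use that $\ang{x}$ has prime order so the non-identity element $y$ generates it, giving $\ang{y} = \ang{x} \ni x$. No gaps.
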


\begin{lemma}\label{number of p-order subgroups in a group} \cite{frobenius1895verallgemeinerung} The number of $p$-order subgroups in a finite group $G$ is $k\cdot p +1$, where $k \geq 0$.
\end{lemma}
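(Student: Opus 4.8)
The plan is to prove the equivalent congruential statement: the number $N$ of subgroups of order $p$ in $G$ satisfies $N \equiv 1 \pmod{p}$, so that $N = kp + 1$ for some integer $k \geq 0$. Here I assume $p \mid |G|$, which is the only regime in which order-$p$ subgroups exist (and is precisely the setting in which this lemma is invoked, since $p \in \pi(G)$).

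First I would reduce the count of subgroups to a count of elements. Since $p$ is prime, every subgroup of order $p$ is cyclic and consists of the identity together with exactly $p-1$ elements of order $p$. Two distinct subgroups $H_1, H_2$ of order $p$ meet only in $e$, because $H_1 \cap H_2$ is a subgroup of $H_1$ and so has order $1$ or $p$, and order $p$ would force $H_1 = H_2$. Hence the sets of order-$p$ elements contributed by distinct order-$p$ subgroups are disjoint, and if $E$ denotes the total number of elements of order $p$ in $G$, then $E = N(p-1)$.

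Next I would count the solutions of $x^p = e$ by a group-action argument in the style of McKay's proof of Cauchy's theorem. Consider
\[
\Omega = \{(a_0, a_1, \ldots, a_{p-1}) \in G^p : a_0 a_1 \cdots a_{p-1} = e\}.
\]
Choosing $a_0, \ldots, a_{p-2}$ freely and setting $a_{p-1} = (a_0 \cdots a_{p-2})^{-1}$ shows $|\Omega| = |G|^{p-1}$. The cyclic group $\mathbb{Z}_p$ acts on $\Omega$ by cyclic rotation of coordinates; this is well defined because $a_0 a_1 \cdots a_{p-1} = e$ yields $a_1 \cdots a_{p-1} a_0 = a_0^{-1}(a_0 \cdots a_{p-1}) a_0 = e$. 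Each orbit has size $1$ or $p$, and the fixed points are exactly the constant tuples $(a, \ldots, a)$ with $a^p = e$. Partitioning $\Omega$ into orbits gives (number of fixed points) $\equiv |\Omega| = |G|^{p-1} \equiv 0 \pmod{p}$, since $p \mid |G|$. Thus the number of solutions of $x^p = e$ is divisible by $p$.

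Finally I would combine the two counts. The solutions of $x^p = e$ are the identity together with the $E$ elements of order exactly $p$ (an element $a \neq e$ with $a^p = e$ has order dividing the prime $p$, hence order $p$), so $1 + E \equiv 0 \pmod{p}$. Substituting $E = N(p-1)$ and using $p - 1 \equiv -1 \pmod{p}$ gives $1 - N \equiv 0 \pmod{p}$, i.e. $N \equiv 1 \pmod{p}$, which is exactly $N = kp + 1$; here $k \geq 0$ because Cauchy's theorem guarantees $N \geq 1$. The main subtlety is verifying that the $\mathbb{Z}_p$-action on $\Omega$ is well defined and that its fixed points are precisely the constant tuples; once the orbit-size dichotomy is established, the remaining modular bookkeeping is routine.
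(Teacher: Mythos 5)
Your proof is correct. Note, however, that the paper does not prove this lemma at all: it is stated as a citation to Frobenius (1895), so there is no in-paper argument to compare against. What you have supplied is a complete, self-contained, elementary proof of the special case actually needed (Frobenius's general theorem says the number of solutions of $x^n=e$ is divisible by $n$ whenever $n$ divides $|G|$; you only need $n=p$, which is exactly the McKay counting argument). Each step checks out: distinct order-$p$ subgroups intersect trivially, so $E=N(p-1)$; the cyclic-shift action on $\Omega$ is well defined and its fixed points are the constant tuples, giving $1+E\equiv 0\pmod p$; and the substitution yields $N\equiv 1\pmod p$, with $k\geq 0$ secured by Cauchy's theorem (which your own orbit count already implies, since the number of solutions of $x^p=e$ is a positive multiple of $p$). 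You were also right to flag the implicit hypothesis $p\mid |G|$: as literally stated the lemma fails when $p\nmid |G|$ (the count is then $0$, not of the form $kp+1$), but in the paper it is only invoked for $p\in\pi(G)$, so your reading is the intended one. The only thing your write-up adds beyond what is strictly necessary is the explicit appeal to Cauchy's theorem by name; otherwise the argument is tight.
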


\begin{lemma}\label{nil: OD strictly greater than 3}
Let $G \notin \Gprime$  be a non-cyclic nilpotent group. If $G$ satisfies all of the following conditions: (a) $|G|=2^mp^n$, where $p$ is an odd prime, $m,n \geq 1$; (b) $G$ has a maximal cyclic subgroup of order $2p^{\beta}$, for some $1 \leq \beta \leq n$; (c) $G$ has at least two subgroups of order $p$; (d) $G$ has at least two subgroups of order $2$, then the oriented diameter of $Pow(G)$ is $4$.
\end{lemma}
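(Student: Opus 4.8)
The plan is to combine the bounds already available with a structural analysis of any hypothetical diameter-$3$ orientation. Since $G$ is non-cyclic nilpotent with $\pi(G)=\{2,p\}$, \Cref{nilpotent} shows every maximal cyclic subgroup of $G$ has order divisible by $2p$, so $G$ has no maximal cyclic subgroup of order $2$ and hence $OD(Pow(G))\le 4$ by \Cref{4 upper bound}; moreover $OD(Pow(G))\ge 3$ by \Cref{nil: od not 2}. Thus it suffices to rule out an orientation $\OO$ of $Pow(G)$ with $diam(Pow(G)_{\OO})=3$. I assume such an $\OO$ exists and derive a contradiction, which forces $OD(Pow(G))=4$.

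First I would pin down the orientation of the edges incident to the dominating vertex $e$. By \Cref{number of p-order subgroups in a group} applied with the primes $2$ and $p$, condition (d) gives at least three subgroups of order $2$ and condition (c) gives at least $p+1$ subgroups of order $p$. Using \Cref{gtf_CLT} and \Cref{fact: about Aij}, two base elements of the same prime (resp.\ two non-base elements) generate intersecting cyclic groups exactly when they lie in a common $X_i$ or $Y_j$ (resp.\ share a coordinate of some $A_{ij}$); hence the ``trivial-intersection'' relation is connected on each of $B_2$, $B_p$ and $NB$. The Uniformity Lemma (\Cref{Fat Arrow Lemma}) then propagates along this relation to show that all edges from $B_2$ to $e$ share one direction $s_2$, all edges from $B_p$ to $e$ share a direction $s_p$, and all edges from $NB$ to $e$ share a direction $s_{NB}$, where each sign records whether the edges point toward or away from $e$.

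Next I would cut down the sign patterns. Fix two involutions $x_a,x_b$ in distinct order-$2$ subgroups; by \Cref{nil: path of length 4} every directed path of length $\le 3$ between them must pass through $e$. The non-identity neighbours of an involution are precisely the $2$-base and non-base elements containing it, so every penultimate vertex on a path entering $x_b$ is of type $B_2$ or $NB$. If $s_{NB}=s_2$, then either all of these (and $e$ itself) have their $e$-edge oriented toward $e$, or all away; in the first case no vertex can be reached from $e$ to serve as the penultimate vertex, and in the second case $x_a$ cannot even reach $e$, so in either case one of $x_a\to x_b$, $x_b\to x_a$ has no path of length $\le 3$ — a contradiction. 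The identical argument with two elements of order $p$ (using that an order-$p$ element has only $B_p$ and $NB$ neighbours) rules out $s_{NB}=s_p$. Hence $s_2=s_p$ and both are opposite to $s_{NB}$; reversing $\OO$ if necessary (which preserves the diameter), I may assume every base element has its $e$-edge directed away from $e$ and every non-base element has its $e$-edge directed toward $e$.

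The main obstacle is this last balanced case, and it is exactly here that condition (b) enters. Let $M=\langle g\rangle$ be a maximal cyclic subgroup of order $2p^{\beta}$, with unique involution $t$ and order-$p$ subgroup inside $Y_{j_0}$, and say $g\in A_{i_0 j_0}$. Maximality of $M$ forces the closed neighbourhood of $g$ to equal $M$, so every directed path entering or leaving $g$ must do so through $M$. Combining the uniform signs with \Cref{nil: path of length 4} and \Cref{pgtf_p}, I would establish a rigid ``double-feeding'' constraint: to be reachable within distance $3$ from the far elements of the other two types, every non-base $v\in A_{ij}$ must receive an incoming edge from its $2$-base root $x_i$ and an incoming edge from some order-$p$ element generating its order-$p$ subgroup. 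Applying this to the non-base elements lying inside $M$, whose only admissible feeders are $t$ and the $p$-base elements of $M$, and accounting for the edges these feeders are simultaneously forced to send out (again via \Cref{nil: path of length 4}), I expect a counting contradiction: the base elements of $M$ cannot supply all the incoming edges demanded of the non-base elements reachable only through $M$. Verifying this incompatibility is the crux of the argument, and it is where the ``thin'' structure of $M$ (a single factor of $2$) guaranteed by (b), together with the enumeration of order-$p$ and order-$2$ subgroups from (c) and (d), is indispensable. Establishing the contradiction completes the proof that $OD(Pow(G))=4$.
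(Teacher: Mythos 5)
Your reduction is sound and, for the easy patterns, arguably cleaner than the paper's. The upper and lower bounds, the propagation of the Uniformity Lemma (\Cref{Fat Arrow Lemma}) over $B_2$, $B_p$ and $NB$ using $r,s\geq 3$ (which does follow from (c), (d) and \Cref{number of p-order subgroups in a group}), and the elimination of the six patterns with $s_{NB}=s_2$ or $s_{NB}=s_p$ --- via the observation that the first/last non-$e$ vertex of any short directed path between two involutions (resp.\ two order-$p$ elements) through $e$ must lie in $B_2\cup NB$ (resp.\ $B_p\cup NB$) --- all check out. This reproduces the paper's Step~1 and its Patterns 1, 3--7, and your sign argument dispatches those six cases more uniformly than the paper's case-by-case treatment.

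The gap is the one remaining pattern ($NB\ra\{e\}$, $\{e\}\ra B_2$, $\{e\}\ra B_p$, up to reversal), which is the only place condition (b) enters and which you explicitly leave as an expectation (``I would establish'', ``I expect a counting contradiction''). The ``double-feeding'' constraint you posit is neither proved nor obviously true: \Cref{nil: path of length 4} forces paths through $e$ only for pairs of the \emph{same} type with trivially intersecting cyclic subgroups, so a far base element may in principle reach a non-base element by an $e$-avoiding path of length $3$, and the claim that every $v\in A_{ij}$ must receive incoming edges from both its $2$-base root and a $p$-base root does not follow from anything you have written. The paper's contradiction in this case is not a counting argument: it proves (Claim~\ref{mandatory_2}) that for a maximal cyclic subgroup $C$ of order $2p^{\beta}$, any $e$-avoiding path of length $3$ from a non-base element of $C$ to an order-$p$ element outside $C$ must have the unique involution $x$ of $C$ as its second vertex; the surviving pattern then forces both $(x,g)\in\OO$ and $(g,x)\in\OO$ for a suitable non-base $g$ in a second maximal cyclic subgroup through $x$ (whose order is controlled by \Cref{obs mcs}). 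You would need to supply this argument, or a genuine substitute for it, before the proof is complete.
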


\begin{proof} The proof is by contradiction and is divided into two main steps: In \textit{Step 1}, we show that if $Pow(G)$ has an orientation with diameter 3, then it must follow one of the 8 general patterns that are discussed below. In \textit{Step 2}, we show that each of these $8$ patterns gives a contradiction. As we will see, some of these patterns are just symmetric versions of other patterns.

\vspace{0.2cm}
    
   \noindent \textbf{Step 1:} We first note that all maximal cyclic subgroups of $G$ are of order $2^{m'}p^{n'}$, $m',n'\geq 1$ (by \Cref{nilpotent}). Therefore, by \Cref{4 upper bound}, $Pow(G)$ has an orientation with diameter 4. Moreover, by \Cref{nil: od not 2}, $OD(Pow(G))\geq 3$. 
    We show that $Pow(G)$ cannot have oriented diameter $3$. For the sake of contradiction, let $\OO$ be an orientation of $Pow(G)$ with diameter 3.  We use the same notations $A_{ij}$, $i\in \{1,2,\dots,r\}$ and $j\in \{1,2,\dots,s\}$ as defined above. Since (c) and (d) hold, we note that $r,s\geq 3$ by \Cref{number of p-order subgroups in a group}. We start by picking an element $v\in A_{11}$. Without loss of generality, let $(v,e)\in \OO$. We show that for any $u\in NB$, we must have $(u,e)\in \OO$. 
    
    Let $u\in A_{ij}$ where $1<i\leq r$ and $1<j\leq s$. By \Cref{fact: about Aij}, $\ang{u} \cap \ang{v}= \{e\}$ and hence $(u,e)\in \OO$ using \Cref{Fat Arrow Lemma}. As $u$ is arbitrarily chosen, we have $(u,e)\in \OO$, for all $u\in A_{ij}$ where $1<i\leq r$ and $1<j\leq s$. Now if $u \in A_{1j}$ (or $A_{i1}$), we have a set $A_{kk}$ where $k\notin \{1,j\}$ (or $k \notin \{i,1\}$), as $r,s\geq 3$.  Note that for  $w \in A_{kk}$, $\ang{v}\cap\ang{w}=\{e\}$ and $\ang{u} \cap\ang{w}=\{e\}$ (by \Cref{fact: about Aij}). So applying \Cref{Fat Arrow Lemma} to $v$ and $w$ gives  $(w,e) \in \OO$. Another application of \Cref{Fat Arrow Lemma} on $u$ and $w$ shows that $(u,e)\in \OO$. As $u$ is arbitrarily chosen, we have $(u,e)\in \OO$ for all $u\in A_{1j}$ (or $A_{i1}$) where $i \in \{1,2,\dots,r\}$, $j \in \{1,2,\dots,s\}$. The case $(e,v)\in \OO$ similarly implies that $(e,u)\in \OO$ for all $u\in NB$.  

Hence, we have either $(u,e) \in \OO$ for all $u \in NB$, or $(e,u) \in \OO$ for all $u \in NB$. We denote these by $NB \xrightarrow{}\{e\}$ and $\{e\} \xrightarrow{} NB$, respectively. In general for an orientation $\OO$ and two sets $A$ and $B$, we write $A \xrightarrow{} B$, if $(a,b)\in \OO$ for all $a\in A,b \in B$.

    In a similar way, using \Cref{Fat Arrow Lemma}, we can show that either $(u,e) \in \OO$ for all $u$ in the set $B_2$ of all 2-base elements (we use the shorthand $B_2 \xrightarrow{}\{e\}$ to denote this case) or  $(e,u) \in \OO$ for all $u \in B_2$ (denoted by $\{e\}\xrightarrow{}$ $B_2$). We can also show that either $(u,e) \in \OO$ for all $u$ in the set $B_p$ of all $p$-base elements (denoted by $B_p \xrightarrow{}\{e\}$) or  $(e,u) \in \OO$ for all $u \in B_p$ (denoted by $\{e\}\xrightarrow{}$ $B_p$).

    The above discussion shows that there are $8$ possible patterns in $\OO$. 

    \vspace{0.2cm}
    
    \noindent \textbf{Step 2:} Now we will inspect all the patterns one by one.

\textit{Pattern 1}: $NB\ra \{e\}, \ B_2\ra \{e\}, \ B_p\ra \{e\}$. This pattern does not yield a strong orientation since there is no outward edge from $e$.

  \textit{Pattern 2:} $\{e\}\ra NB, \ B_2\ra \{e\}, \ B_p \ra \{e\}$.
    In this pattern, any directed path containing $e$ from any non-base element to any base element is of length at least $4$. Now, we show that there exists at least a pair of vertices $a$ and $b$ such that we can not have a directed $e$-avoiding path from $a$ to $b$ of length at most $3$.  By condition $(b)$, $G$ has a maximal cyclic subgroup $C$ of order $2p^{\beta}$, for some $1 \leq \beta \leq  n$. Now, by condition $(c)$, $G$ has at least two subgroups of order $p$. Hence by \Cref{gtf_CLT}, $G$ has a subgroup $\ang{v}$ of order $p$ such that $C \cap \ang{v}=\{e\}$. We need the following claim.
    
    \begin{claim}
    \label{mandatory_2}
        Let $C$ be a maximal cyclic subgroup of $G$ of order $2p^{\beta}$, $\beta \geq 1$. Let $u$ be a non-base element in $C$ and $v$ be an element of order $p$ not in $C$. Then, there is no $e$-avoiding path between $u$ and $v$ of length at most $2$ in $Pow(G)$. Moreover, if $P: uw_1w_2 v$ is an $e$-avoiding path of length $3$ between $u$ and $v$ in $Pow(G)$, then $w_1$ has to be the unique element of order $2$ in $C$.
    \end{claim}
   \noindent \textit{Proof of \Cref{mandatory_2}:}
         Let $x$ and $y$ be elements of order $2$ and $p$ respectively in $C$. Then by \Cref{gtf_CLT}, $\ang{y} \leq \ang{u} \leq C$. First, we show that there is no $e$-avoiding path between $u$ and $v$ of length at most $2$ in $Pow(G)$. Since $u$ does not generate $v$, $\{u,v\}$ is not an edge (by \Cref{pgtf_p}). Now, if possible, let $w\neq e$ be a common neighbour of $u$ and $v$. By \Cref{pgtf_p}, 
         $\ang{v} \leq \ang{w}$. Moreover since $C \cap \ang{v}=\{e\}$, we must have $u \in \ang{w}$ (otherwise, 
         $v $ belongs to $\ang{u}$, a contradiction to our assumption) and hence $\ang{u} \leq \ang{w}$. This implies that $\ang{w}$ contains $\ang{y}$ and also $\ang{v}$,  a contradiction to \Cref{gtf_CLT}. Hence, no $e$-avoiding path between $u$ and $v$ in $Pow(G)$ of length at most $2$ exists. 

    Now, let $P:uw_1w_2v$ be an $e$-avoiding path as mentioned in the statement of the claim. At first, observe that if $o(w_1)=2^{\al}$ for some $\alpha\geq 1$, then\footnote{This is because in $Pow(G)$ if an element $x$ of prime power order is adjacent to an element $y$ of non-prime power order, then $x \in \ang{y}$.} $w_1 \in \ang{u} \subseteq C$. Now, as $o(w_1)$ divides the order of the subgroup $C$, we have $o(w_1)=2$ and hence $w_1=x$. So, to prove the claim, it is enough to show that $p$ does not divide $o(w_1)$. For the sake of contradiction, let $p| o(w_1)$. 
        
        If $w_1 \in \ang{u}$, then $\ang{w_1}$ is a subgroup 
        $C$. Since $p| o(w_1)$, $\ang{w_1}$ contains a unique subgroup of order $p$ (by \Cref{gtf_CLT}) and hence $\ang{y} \leq \ang{w_1}$. If $u \in \ang{w_1}$, then $\ang{y} \leq \ang{u} \leq \ang{w_1}$. Hence $\ang{y} \leq \ang{w_1}$ in both the cases.

        Now $w_2$ is a common neighbour of $w_1$ and $v$. As $\{w_2, v\} \in Pow(G)$ and $o(v)=p$, we have $v \in \ang{w_2}$, by \Cref{pgtf_p}. On the other hand, $\{w_1,w_2\}$ implies either $w_1 \in \ang{w_2}$ or $w_2 \in \ang{w_1}$. If $w_1 \in \ang{w_2}$, then $\ang{w_2}$ contains $\ang{w_1}$. Hence $\ang{w_2}$ contains two distinct subgroups $\ang{y}$ and $\ang{v}$ of order $p$, a contradiction to \Cref{gtf_CLT}. Also, if $w_2 \in \ang{w_1}$, then 
        $\ang{v} \leq \ang{w_1}$ (since $v \in \ang{w_2}$). Again, this contradicts \Cref{gtf_CLT}. So, $p$ does not divide $o(w_1)$. 
        \hfill $\lhd$

    \vspace{0.2cm}
        Let $C_1$ be a maximal cyclic subgroup of order $2p^{\beta}$ containing $x$ as the unique $2$ order element and $y_1$ as a $p$ order element. Let $u_1$ be a non-base element of $C_1$. Let $v_2$ be an element of order $p$ outside $C_1$. Using \Cref{mandatory_2}, any directed $e$-avoiding path of length $3$ from $u_1$ to $v_2$ must use the edge $(u_1,x)$. So, the path should be of the form: $u_1xgv_2$, where $g$ is some element\footnote{Note that $g$ is a non-base element.} such that $\ang{g}$ contains both $x$ and $v_2$ (by \Cref{pgtf_p}). Hence, to have a directed $e$-avoiding path from $u_1$ to $v_2$ of length $3$, we must have $(x,g) \in \OO$. 
        Now if $C_2$ is a maximal cyclic subgroup containing $g$ (and hence containing $x$), then by \Cref{obs mcs} (see \Cref{nn}), the order of $C_2$ is $2p^{\beta'}$, for some $\beta' \geq 1$. To have a directed $e$-avoiding path from $g$ to $y_1$ of length at most $3$, we must put $(g,x) \in \OO$, by using \Cref{mandatory_2}. This contradicts our previous requirement of $(x,g) \in  \OO$. Hence, Pattern 2 is not possible in $\OO$.

    \textit{Pattern 3}: $NB\ra \{e\},\  \{e\} \ra B_2,\ B_p\ra \{e\}$. Let $y$ and $y'$ be two $p$-base elements such that $[y]\neq [y']$, i.e., $\ang{y} \cap \ang{y'} = \{e\}$. Using \Cref{nil: path of length 4}, a directed path $\overrightarrow{P}$ from $y$ to $y'$ of length $3$ must pass through $e$. Moreover, since $B_p\ra \{e\}$, $\overrightarrow{P}$ must have $(y,e)$.
    Note that the only outward edges from $e$ are towards the $2$-base elements. Also, since $y'$ is a $p$-base element, any directed path from a $2$-base element to $y'$ is of at least length $2$, by \Cref{remark power graph}. 
    Therefore, the length of a directed path from $y$ to $y'$ is at least $4$. Hence, we cannot have an orientation with diameter $3$.

    \textit{Pattern 4}: $NB\ra \{e\}, B_2 \ra \{e\}, \{e\}\ra B_p$. As done in Pattern 3, we can similarly argue that there is no directed path of length at most $3$ from $x$ to $x'$, where $x$ and $x'$ are $2$-base elements and $[x]\neq[x']$.

    The last four patterns are symmetric to the first four patterns and can be dealt with using the following simple observation.

\begin{observation}\label{symmetric choice of orientation}
    
Let $\mathfrak{X} =(V,E)$ be a directed graph. Also, let $\mathfrak{X}_{rev}=(V, E_{rev})$ be the directed graph where $E_{rev}$ is the set of edges obtained by reversing the directions of all the edges in $E$. Then $diam(\mathfrak{X})=diam(\mathfrak{X}_{rev})$.

Let $\OO$ be a partial orientation of an undirected graph $X=(V,E)$, $X_\OO$ be the directed graph $(V,\OO)$ and $A \subseteq \OO$. If $diam(X_{\OO})=d$, then there exists a partial orientation $\OO'$ of $X$ containing $A_{rev}$ such that $diam(X_{\OO'})=d$.\hfill $\lhd$

\end{observation}

     \textit{Pattern 5}: $\{e\}\ra NB, \ \{e\}\ra B_2, \ \{e\} \ra B_p$. By \Cref{symmetric choice of orientation}, this is symmetric to Pattern 1. By `symmetric', we mean that getting a partial orientation containing Pattern 5 with diameter $3$ would imply that there is a partial orientation containing Pattern 1 with diameter $3$.
    
    \textit{Pattern 6}: $\{e\}\ra NB, \ B_2\ra \{e\}, \ \{e\}\ra B_p$. By \Cref{symmetric choice of orientation}, this is symmetric to Pattern 3.

    \textit{Pattern 7:} $\{e\}\ra NB, \ \{e\}\ra B_2, \ B_p \ra \{e\}$. By \Cref{symmetric choice of orientation}, this is symmetric to Pattern 4.

    \textit{Pattern 8:} $NB \ra \{e\},\ \{e\} \ra B_2, \ \{e\} \ra B_p$. By \Cref{symmetric choice of orientation}, this is symmetric to Pattern 2.

    So we have shown that none of the $8$ patterns is satisfied in an orientation of $Pow(G)$ with diameter $3$. Hence, it is proved that if $G$ satisfies the given conditions (a)-(d), then we cannot orient $Pow(G)$ with diameter $3$.
\end{proof}

We now state the main result on the oriented diameter of power graphs of finite non-cyclic nilpotent groups which are not in $\Gprime$.


\begin{theorem}\label{nil: main result}
    Let $G \notin \Gprime$ be a finite non-cyclic nilpotent group. Then the oriented diameter of $Pow(G)$ is $3$ if and only if $G$ satisfies at least one of the following conditions: (a) $|G|$ has at least two distinct odd prime factors; (b) $G$ has no maximal cyclic subgroup of order $2p^{\beta}$, $1\leq \beta \leq n$, where $p$ is an odd prime; (c) $G$ has unique $p$-order subgroup, where $p$ is an odd prime; (d) $G$ has unique $2$-order subgroup. Otherwise, the oriented diameter of $Pow(G)$ is $4$.
\end{theorem}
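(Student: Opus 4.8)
The plan is to first observe that for every $G$ in the stated class one has $OD(Pow(G)) \in \{3,4\}$, so that the theorem reduces to deciding \emph{when} the value is $4$. The lower bound $OD(Pow(G)) \geq 3$ is exactly \Cref{nil: od not 2}. For the upper bound, note that by \Cref{nilpotent} every maximal cyclic subgroup of $G$ is divisible by each prime in $\pi(G)$, and since $G \notin \Gprime$ forces $|\pi(G)| \geq 2$, no maximal cyclic subgroup has order $2$; hence \Cref{4 upper bound} gives $OD(Pow(G)) \leq 4$. Thus the claim is equivalent to: $OD(Pow(G)) = 4$ if and only if none of (a)--(d) holds.

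The crucial bookkeeping step is that the negation of ``at least one of (a)--(d)'' is \emph{precisely} the hypothesis of \Cref{nil: OD strictly greater than 3}. Indeed, since $|\pi(G)| \geq 2$, the failure of (a) forces $\pi(G) = \{2,p\}$ with $p$ odd, i.e. $|G| = 2^m p^n$ with $m,n \geq 1$; the failure of (b) means $G$ \emph{does} possess a maximal cyclic subgroup of order $2p^{\beta}$; the failure of (c) means $G$ has at least two subgroups of order $p$; and the failure of (d) means $G$ has at least two subgroups of order $2$. These are exactly conditions (a)--(d) of \Cref{nil: OD strictly greater than 3}, which then yields $OD(Pow(G)) = 4$. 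Since $OD(Pow(G)) \in \{3,4\}$, this single implication simultaneously delivers the ``otherwise'' clause and, by contraposition, the ``only if'' direction of the theorem.

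It remains to prove the ``if'' direction: assuming at least one of (a)--(d) holds, I would construct a partial orientation of $Pow(G)$ of diameter $3$ and invoke \Cref{obs od is at most diam of partial orient}, which together with the lower bound forces $OD(Pow(G)) = 3$. I would treat the four sufficient conditions as independent cases, each built around the dominating vertex $e$ and the $C_4$-gadget, orienting the bulk of the edges through $e$ so that $e$ has small in- and out-eccentricity, and then placing gadgets to repair exactly the vertex pairs whose only short links are $e$-avoiding, namely the pairs singled out by \Cref{nil: path of length 4}. In case (a) the two distinct odd primes supply enough adjacency between base classes of different odd primes to route every such pair in length at most $3$, which I expect to isolate as a separate lemma (the \Cref{nil: two odd primes} referenced in the introduction). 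In cases (c) and (d) the uniqueness of the $p$-order (resp. $2$-order) subgroup means the Sylow $p$-subgroup (resp. $2$-subgroup) is cyclic or generalized quaternion, which collapses the family $B_p$ (resp. $B_2$) into a single nested tower of GE-classes and removes one of the two ``sides'' that produced the obstruction in \Cref{nil: OD strictly greater than 3}. In case (b) the absence of a maximal cyclic subgroup of order $2p^{\beta}$ destroys precisely the configuration exploited against Pattern~$2$ in that lemma, so an orientation of the shape $B_2 \ra \{e\}$, $B_p \ra \{e\}$, $\{e\} \ra NB$ (or its reverse, via \Cref{symmetric choice of orientation}) can be completed to diameter $3$.

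The main obstacle is this ``if'' direction, and within it the verification that for \emph{every} ordered pair of vertices a directed path of length at most $3$ exists. The delicate pairs are exactly the $e$-avoiding ones governed by \Cref{nil: path of length 4} and the uniformity constraints of \Cref{Fat Arrow Lemma}, and the analysis is cleanest if each case is reduced to the finitely many GE-class \emph{types} (base $2$, base $p$, non-base, and $e$), since \Cref{equivalence classes form a clique} permits arguing at the level of adjacent classes rather than individual elements. The real care lies in placing the $C_4$-gadgets so that they simultaneously fix all bad pairs without lengthening any path that was already short; this is where I expect most of the work, as opposed to the short logical reduction that settles the other two directions.
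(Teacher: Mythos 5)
Your proposal is correct and follows essentially the same route as the paper: the ``otherwise''/``only if'' directions reduce, exactly as you observe, to \Cref{nil: OD strictly greater than 3} (whose hypotheses are precisely the simultaneous failure of (a)--(d) given $G\notin\Gprime$), and the ``if'' direction is handled case-by-case via diameter-$3$ partial orientations, which the paper isolates as \Cref{nil: two odd primes}, \Cref{nil: no mcs of order 2pk}, \Cref{nil: one p-order} and \Cref{nil: one 2-order}. Your sketches of those four constructions (orienting through $e$, $C_4$-gadgets for the $e$-avoiding pairs of \Cref{nil: path of length 4}, the collapse of $B_p$ or $B_2$ under uniqueness of the prime-order subgroup, and the Pattern-2/Pattern-8 shape in case (b)) match the paper's actual arguments.
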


Examples of groups for each of the conditions are in  \Cref{nil: example}. We first state the following four lemmas: \Cref{nil: two odd primes}, \Cref{nil: no mcs of order 2pk}, \Cref{nil: one p-order} and \Cref{nil: one 2-order}, which are used to prove the above theorem.

\begin{lemma}\label{nil: two odd primes}
    Let $G$ be a non-cyclic nilpotent group. If $|G|$ is divisible by at least two distinct odd primes, then the oriented diameter of $Pow(G)$ is $3$.
\end{lemma}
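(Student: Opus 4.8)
The plan is to prove the two bounds separately. For the lower bound, observe that a non-cyclic nilpotent group whose order has two distinct prime factors cannot be a $p$-group, so $G \notin \Gprime$, and \Cref{nil: od not 2} immediately gives $OD(Pow(G)) \geq 3$. Strong orientability is not an issue here: by \Cref{nilpotent} every maximal cyclic subgroup of $G$ has order divisible by every prime in $\pi(G)$, hence order $>2$, so by \Cref{power graph 2-edge connected} the graph is $2$-edge connected, and \Cref{4 upper bound} already caps $OD(Pow(G))$ at $4$. Everything therefore reduces to exhibiting a (partial) orientation $\OO$ of $Pow(G)$ with $diam(Pow(G)_{\OO}) = 3$; by \Cref{obs od is at most diam of partial orient} a partial orientation suffices.

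The construction I would build is centred at the identity $e$, which is a dominating vertex. Fix two distinct odd primes $p, q \mid |G|$. I would split $V \setminus \{e\}$ into an ``in-set'' $A$ (edges oriented $a \to e$) and an ``out-set'' $B$ (edges oriented $e \to b$), chosen so that $e$ acquires both in-eccentricity and out-eccentricity at most $2$. The point of this target is a clean case split: if $e$ reaches every vertex in $\leq 2$ steps, then every in-neighbour reaches everything in $\leq 3$ (via $a \to e \leadsto \cdot$), and if every vertex reaches $e$ in $\leq 2$ steps, then every vertex reaches every out-neighbour in $\leq 3$ (via $\cdot \leadsto e \to b$). Concretely I would place every non-base element in $A$ and every base (prime-power-order) element in $B$, and orient each base--non-base edge from the base element to the non-base element (such an edge always has the base element inside the non-base element's cyclic subgroup). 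Then $e \to b \to a$ reaches each non-base $a$ from $e$ by taking $b$ a base element of $\langle a \rangle$, and $b \to a \to e$ routes each base $b$ back to $e$ by taking $a = by$ a non-base multiple with $y$ a $q$-element (which exists precisely because a second prime is available). This gives $e$ eccentricity $2$ and dispatches every pair except the \emph{out-to-in} pairs $(b,a)$ with $b$ base and $a$ non-base.

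The heart of the argument, and the step I expect to be hardest, is these out-to-in pairs. For a pair of the ``same type with trivial intersection'' (two $p$-base, two $q$-base, or two non-base elements $u,v$ with $\langle u \rangle \cap \langle v \rangle = \{e\}$), \Cref{nil: path of length 4} forbids $e$-avoiding paths of length below $4$, so such a pair could never be joined in $\leq 3$ steps if split across $A$ and $B$. I would therefore invoke the Uniformity Lemma (\Cref{Fat Arrow Lemma}) to argue that any diameter-$3$ orientation must keep such pairs on the same side, and I would build $\OO$ so that it does. That leaves only out-to-in pairs of nontrivial intersection or mixed type, and for these I would produce explicit directed paths of length at most $3$ through non-base \emph{connector} elements of order divisible by $pq$, organised as the $C_4$-gadget (a directed $4$-cycle, which itself has diameter $3$): from a base element $b$ I step up to a non-base multiple lying in a maximal cyclic subgroup — a generator of which is non-base by \Cref{nilpotent} — and then step down to the target $a$.

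Finally, I would emphasise why the hypothesis that \emph{both} primes are odd is exactly what makes the connector construction succeed, contrasting with the $2^m p^n$ case of \Cref{nil: OD strictly greater than 3}. Since $p$ and $q$ are odd, $\phi(p), \phi(q) \geq 2$, so every cyclic base-subgroup has at least two generators and every base element sits inside abundantly many non-base multiples; there is no order-$2$ ``leaf'' whose single neighbour would force the contradictory orientations that rule out diameter $3$ when $2 \mid |G|$ and a maximal cyclic subgroup of order $2p^{\beta}$ is present. After verifying that the finitely many residual pair-types each receive a directed path of length at most $3$, this upper bound of $3$ together with the lower bound yields $OD(Pow(G)) = 3$.
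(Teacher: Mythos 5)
Your skeleton matches the paper's: the lower bound via \Cref{nil: od not 2}, the orientation centred at $e$ with every non-base element as an in-neighbour and every base element as an out-neighbour (this is exactly the paper's $\OO_1$), and the correct identification of the base-to-non-base pairs as the only problematic direction. The gap is in how you resolve that direction. Your blanket rule ``orient each base--non-base edge from the base element to the non-base element'' is incompatible with the paths you need. Take $G=\ZZ_p\times\ZZ_p\times\ZZ_q$, $b=(1,0,0)$ a $p$-base element and $a=(0,1,1)$ a non-base element, so $\langle b\rangle\cap\langle a\rangle=\{e\}$. By \Cref{pgtf_p} and \Cref{gtf_CLT} their only common neighbour is $e$, so no length-$2$ connector path $b\to n\to a$ exists; and a route through $e$ costs at least $4$, since all of $e$'s out-edges land on base elements and one more step is needed to leave $B$. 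Hence the connection must be an $e$-avoiding path of length exactly $3$, say $b\,w_1\,w_2\,a$; by the argument of \Cref{e-avoiding path} it must contain a $q$-base element, which forces $w_2$ to be $q$-base and $w_1$ to be non-base, so the middle edge $w_1\to w_2$ goes from a non-base element \emph{to} a base element --- exactly the direction your uniform rule forbids. Your description of the connector step (``step up to a non-base multiple \dots\ and then step down to the target'') is a length-$2$ hop that does not exist for such pairs.

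This is precisely why the paper's $C_4$-gadget is not decoration: for each non-base class and each adjacent odd-prime-order base class it deliberately reverses some of the base--non-base edges, namely $(n_1,u_1)$ and $(n_2,u_2)$, to supply the needed non-base-to-base ``return'' edges, and it must choose the anchor points $n_1,n_2$ disjointly across the different base classes adjacent to the same non-base class so the gadgets do not collide (this uses $|N|=\phi(r)>2k$). You name the gadget but treat it as a directed $4$-cycle of diameter $3$ rather than as the source of these reversed edges, and you never reconcile it with your uniform orientation rule stated two paragraphs earlier. Completing the argument also requires the paper's case analysis on the order of the source base element and on which primes divide the order of the target (in particular the sub-case where the source has odd prime-power order and the target's order is divisible only by $2$ and that same prime, which routes through the order-$2$ element via $\OO_7$ and $\OO_4$ using \Cref{remark common element}). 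Finally, \Cref{Fat Arrow Lemma} and \Cref{nil: path of length 4} are stated only for $|G|=p^mq^n$, so they cannot be cited verbatim when $|G|$ has three or more prime factors; they play no essential role in the upper bound anyway.
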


\begin{lemma} \label{nil: no mcs of order 2pk}
    Let $G$ be a non-cyclic nilpotent group of order $2^mp^n$, where $p$ is an odd prime and $m,n \geq 1$.  If $G$ has no maximal cyclic subgroup of order $2p^{\beta}$, $1\leq \beta \leq n$, then the oriented diameter of $Pow(G)$ is $3$.
\end{lemma}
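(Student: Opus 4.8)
The plan is to sandwich the quantity as $3 \le OD(Pow(G)) \le 4$ and then produce one orientation of diameter exactly $3$. For the bounds, observe that by \Cref{nilpotent} every maximal cyclic subgroup of $G$ has order divisible by both $2$ and $p$; combined with the hypothesis that no maximal cyclic subgroup has order $2p^{\beta}$, this forces \emph{every} maximal cyclic subgroup of $G$ to have order divisible by $4$. In particular $G$ has no maximal cyclic subgroup of order $2$, so \Cref{4 upper bound} gives $OD(Pow(G)) \le 4$, while $G$ is non-cyclic and $G \notin \Gprime$, so \Cref{nil: od not 2} gives $OD(Pow(G)) \ge 3$. Hence it suffices to exhibit a single orientation $\OO$ with $diam(Pow(G)_{\OO}) = 3$.

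For the construction I would reuse the decomposition of $G \setminus \{e\}$ into $B_2$, $B_p$, $NB$ together with the subsets $A_{ij}, X_i, Y_j$ from just before \Cref{nil: OD strictly greater than 3}. The idea is to orient the edges at $e$ by one of the patterns that \Cref{nil: OD strictly greater than 3} had to exclude, namely $\{e\}\to NB$, $B_2 \to \{e\}$, $B_p \to \{e\}$ (Pattern 2). This is the crucial use of the hypothesis: the \emph{only} obstruction to Pattern 2 in \Cref{nil: OD strictly greater than 3} was \Cref{mandatory_2}, which required a maximal cyclic subgroup of order exactly $2p^{\beta}$; under our assumption no such subgroup exists, so the pattern becomes feasible once the remaining edges are set. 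Inside each clique coming from a GE-class, and inside the cliques sitting in a maximal cyclic subgroup, I would place $C_4$-gadgets and invoke \Cref{extension} to guarantee that local out- and in-distances are at most $2$.

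I would then verify $diam=3$ by a case analysis over ordered pairs according to the types of the endpoints. The genuinely constrained pairs are the same-type pairs $u,v$ with $\langle u\rangle \cap \langle v\rangle = \{e\}$, which by \Cref{nil: path of length 4} admit no $e$-avoiding path of length below $4$ and must be routed through $e$ in exactly three steps. Divisibility-by-$4$ enters directly here: for a $p$-base target $y$ of order $p^{k}$ (respectively a $2$-base target $x$ of order $2^{k}$) the maximal cyclic subgroup containing it has order divisible by $4$ and by $p^{k}$, hence contains a non-base bridge $z$ of order $2p^{k}$ (respectively $2^{k}p$) with $y \in \langle z\rangle$ (respectively $x \in \langle z\rangle$); since $\{e\}\to NB$ we orient $z\to y$ and obtain the length-$3$ path $u \to e \to z \to y$. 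The remaining cross-type and comparable pairs are connected by short $e$-avoiding paths living inside a common cyclic subgroup whose generator exists by \Cref{nilpotent} and \Cref{gtf_CLT}.

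The step I expect to be the main obstacle is the \emph{global consistency} of the cross-type edges (those between $NB$ and $B_2 \cup B_p$) together with the non-adjacent cross-type pairs. For example, a non-base $u$ of order $2^{a}p^{b}$ and a $2$-base $x$ whose $2$-part strictly exceeds $2^{a}$ are non-adjacent and cannot be joined through $e$ within Pattern 2, so they must be connected by a length-$3$ $e$-avoiding path such as $u \to u_p \to g \to x$, where $u_p$ is the $p$-part of $u$ and $g$ generates $\langle x\rangle \times \langle u_p\rangle$. This forces the edge $u_p \to g$ (a $B_p$–$NB$ edge in one direction), whereas the bridging argument of the previous paragraph forces other $B_p$–$NB$ edges in the opposite direction. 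Checking that no single edge is demanded in both directions, and packaging the local choices as $C_4$-gadgets, is where the abundance of over-groups supplied by the divisibility-by-$4$ hypothesis is essential and where most of the bookkeeping lies. Once this consistency is settled, assembling the cases yields $diam(Pow(G)_{\OO}) = 3$, completing the proof.
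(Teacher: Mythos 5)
Your reduction is sound: \Cref{nilpotent} together with the hypothesis does force every maximal cyclic subgroup of $G$ to have order divisible by $4$, so \Cref{4 upper bound} and \Cref{nil: od not 2} sandwich $OD(Pow(G))$ between $3$ and $4$, and it remains to exhibit one diameter-$3$ orientation. Your architecture for that orientation is also essentially the paper's: the paper fixes the pattern $NB\ra\{e\}$, $\{e\}\ra B_2$, $\{e\}\ra B_p$ at the identity (the global reverse of your Pattern 2, which is equivalent by \Cref{symmetric choice of orientation}), routes the same-type pairs of \Cref{nil: path of length 4} through $e$ via non-base bridges, and handles the non-adjacent cross-type pairs with $e$-avoiding detours through over-groups whose existence rests on divisibility by $4$.

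The gap is that you stop exactly where the proof has to begin. Noting that \Cref{mandatory_2} no longer applies shows only that one known obstruction to the chosen pattern disappears; it does not show the pattern extends to a diameter-$3$ orientation. The entire content of the lemma is the globally consistent assignment of directions to the edges of $E(B,NB)$ that you defer as ``bookkeeping'': you correctly observe that the bridge paths demand certain $B_p$--$NB$ edges oriented from non-base to base while the $e$-avoiding cross-type paths demand others oriented from base to non-base, but you never resolve the tension. The paper resolves it explicitly in \Cref{app_nil_no mcs of order 2pk} by augmenting $\OO_1,\OO_2,\OO_3$ of \Cref{Remark_2} with four further partial orientations $\OO_4$--$\OO_7$: the $C_4$-gadgets ($\OO_2$ for the adjacent base class of order $p$, and $\OO_5$ for the adjacent base class of order $2^2$, the latter available only because every maximal cyclic subgroup has order divisible by $4$) give each non-base class two disjoint pairs of anchor points through which the adjacent base classes are reachable in both directions with no edge used in two senses; all remaining $B$-to-$NB$ edges are oriented away from the base elements ($\OO_3,\OO_4,\OO_6$); and $\OO_7$ supplies the one missing return route from non-base elements of order $2^2p^{\beta}$ to the unique element of order $2$ in their cyclic subgroup. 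Specifying these choices, checking the anchor points can be chosen disjointly, and verifying every ordered pair of vertex types is the proof; without it your argument is a plausible plan rather than a demonstration that the required orientation exists.
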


\begin{lemma}\label{nil: one p-order}
    Let $G$ be a non-cyclic nilpotent group and $|G|=2^mp^n$, where $p$ is an odd prime and $m,n \geq 1$. If $G$ has unique subgroup of order $p$, then the oriented diameter of $Pow(G)$ is $3$.
\end{lemma}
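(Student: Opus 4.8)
The plan is to first pin down the structure that the hypothesis forces, then dispose of the easy bounds, and finally do the real work, which is an explicit partial orientation of diameter $3$.

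First I would reduce the group. Being nilpotent, $G=S_2\times S_p$ is the product of its Sylow subgroups, and since every subgroup of order $p$ is a $p$-subgroup it lies in the (unique) Sylow $p$-subgroup $S_p$. Hence ``$G$ has a unique subgroup of order $p$'' is equivalent to ``$S_p$ has a unique subgroup of order $p$''. By \Cref{unique p-subgroup of a p-group}, a $p$-group with a unique subgroup of order $p$ is cyclic or generalized quaternion; as generalized quaternion groups are $2$-groups and $p$ is odd, $S_p$ must be cyclic, so $G\cong S_2\times\ZZ_{p^n}$ with $S_2$ non-cyclic. I record the two structural facts I will lean on. Writing $\langle y\rangle$ for the unique order-$p$ subgroup, \Cref{gtf_CLT} shows $\langle y\rangle\leq\ang{u}$ for every $u$ with $p\mid o(u)$, so each generator of $\langle y\rangle$ is adjacent in $Pow(G)$ to every $p$-base and every non-base element; on the other hand a $2$-base element $(a,e)$ is adjacent, inside the central clique $C:=\{e\}\times S_p$, only to $e$. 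This last point is the whole difficulty.

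Next I would settle the bounds. By \Cref{nilpotent} every maximal cyclic subgroup of $G$ has order divisible by both $2$ and $p$, hence exceeds $2$, so $G$ has no maximal cyclic subgroup of order $2$ and \Cref{4 upper bound} gives $OD(Pow(G))\le 4$; the matching lower bound $OD(Pow(G))\ge 3$ is \Cref{nil: od not 2}. Thus it suffices, by \Cref{obs od is at most diam of partial orient}, to exhibit a partial orientation of $Pow(G)$ with diameter exactly $3$.

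For the construction I would view $Pow(G)$ as the union of the power graphs of the maximal cyclic subgroups $M_a=\langle a\rangle\times\ZZ_{p^n}$ (each cyclic), all sharing $C=\{e\}\times S_p$; call $M_a\setminus C$ the \emph{region} $R_a$. I would first orient $C\cong\ZZ_{p^n}$ as a complete graph of diameter $2$ via \Cref{extension} (legitimate since $p^n\ge 3$ and $p^n\neq 4$ because $p$ is odd), keeping two order-$p$ elements $y,y'$ as relays that are adjacent to \emph{all} non-base vertices. Then in each region I would split the non-base elements into two nonempty ``inward'' and ``outward'' classes, orienting $e\to g$ for inward $g$ and $g\to e$ for outward $g$, routing a relay through the region by a directed $4$-cycle $e\to g_{\mathrm{in}}\to y\to g_{\mathrm{out}}\to e$ (the $C_4$-gadget), and finally sending every $2$-base element $x\in R_a$ out to $e$ while making it the head of an inward edge $g_{\mathrm{in}}\to x$. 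The point of the inward/outward split is that every region then exposes to $e$ both a vertex reachable from $e$ in one step and a vertex reaching $e$ in one step; this is exactly the configuration that I verified rules out distance-$4$ cross-region pairs on the tight instance $\ZZ_2\times\ZZ_2\times\ZZ_3$, where $C$ is a single directed triangle with only the two relays $y,y'$.

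The verification reduces to checking that every ordered pair admits a directed path of length $\le 3$: pairs inside $C$ use its diameter-$2$ orientation, pairs within one region live in a cyclic power graph and are short, and the delicate cross-region and $2$-base pairs are routed through $C$ (outward vertex to $e$ or a relay in one step, one hop inside $C$, then one step to an inward vertex or $2$-base head). I expect the main obstacle to be precisely this cross-region bookkeeping for the low-degree $2$-base vertices: making the inward/outward partition and the relay assignment simultaneously consistent is forced and subtle when $C$ is small (the case $p^n=3$) while $S_2$ has many maximal cyclic subgroups, and when $o(a)>2$ produces several $2$-base elements in a single region. The balancing just described is what I expect to make all of these distances collapse to $3$.
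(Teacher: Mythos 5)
Your structural reduction and your bounds are fine, and your key observation --- that the $p-1\ge 2$ generators of the unique order-$p$ subgroup are adjacent to \emph{every} non-base element and can serve as universal relays --- is exactly the lever the paper uses (there it appears as the unique base class of order $p$ participating in a $C_4$-gadget with every non-base class). But the construction you describe is not fully specified, and as written it does not deliver diameter $3$. The concrete failure point is reaching an ``outward'' non-base vertex. Take a $2$-base element $x$ in one region and an outward non-base element $g_{\mathrm{out}}$ in another region with $\ang{x}\cap\ang{g_{\mathrm{out}}}=\{e\}$. A short computation with \Cref{gtf_CLT} shows $e$ is essentially the only useful cut vertex here, so you need $d(e,g_{\mathrm{out}})\le 2$; but in your orientation the only specified route from $e$ to $g_{\mathrm{out}}$ is around the $4$-cycle $e\to g_{\mathrm{in}}\to y\to g_{\mathrm{out}}$, which has length $3$, giving $d(x,g_{\mathrm{out}})=4$. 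The paper avoids this by orienting $e$ toward \emph{every} base element and arranging the gadget between the order-$p$ class and each non-base class so that every non-base vertex has an in-edge from some order-$p$ element, forcing $d(e,v)=2$ for all $v\in NB$; your scheme routes $e$ into the regions only through the inward non-base vertices, which is not enough. You also leave unoriented the edges between outward non-base elements and $2$-base elements, and the edges among several $2$-base elements of one region, and you do not address that your ``regions'' $M_a\setminus C$ need not be disjoint (for $S_2=Q_8$ all three maximal cyclic subgroups of $S_2$ share the central involution, so a non-base element can lie in several regions and your per-region inward/outward labels must be shown consistent). Finally, you explicitly defer the cross-region verification (``I expect \dots''), which is precisely the part that constitutes the proof; the paper's argument is a complete case analysis over (base, base), (base, non-base), (non-base, base), (non-base, non-base) pairs built on the four explicit partial orientations $\OO_1,\dots,\OO_4$. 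The strategy is salvageable --- replace your single relay $4$-cycle per region by the paper's device of splitting the order-$p$ class into two halves and wiring a $C_4$-gadget into every non-base class, and orient $e$ toward all base elements --- but as it stands the proposal has a genuine gap rather than a complete proof.
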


\begin{lemma} \label{nil: one 2-order}
    Let $G$ be a non-cyclic nilpotent group and $|G|=2^mp^n$, $m, n \geq 1$, where $p$ is an odd prime. If $G$ has a unique subgroup of order $2$, then the oriented diameter of $Pow(G)$ is $3$.
\end{lemma}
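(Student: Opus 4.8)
The plan is to prove the lower bound and the upper bound separately. Since $G\notin\Gprime$ is non-cyclic nilpotent, \Cref{nil: od not 2} already gives $OD(Pow(G))\geq 3$, so the entire content is to exhibit an orientation (or, via \Cref{obs od is at most diam of partial orient}, a partial orientation) of $Pow(G)$ of diameter $3$. Throughout I would write $G=S_2\times S_p$ for the Sylow decomposition, let $y$ denote the unique involution, and recall from \Cref{unique p-subgroup of a p-group} (contrapositive) that a unique subgroup of order $2$ forces $S_2$ to be either cyclic or a generalized quaternion group $Q_{2^n}$; both subcases must be carried along. The first thing I would record is the structural consequence of the hypothesis: every element of even order has $\langle y\rangle$ inside its cyclic subgroup, so $y$ is adjacent in $Pow(G)$ to every element of $B_2\cup NB$, while it is adjacent to no element of $B_p$ (the orders are incomparable). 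Hence $e$ and $y$ are two vertices that jointly dominate every even-order element.

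Next I would isolate which pairs are genuinely hard. Applying \Cref{nil: path of length 4} with the two primes being $2$ and $p$, the only pairs with $e$-avoiding distance $4$ are those with $\langle u\rangle\cap\langle v\rangle=\{e\}$ in one of the three listed cases; but the two cases involving even-order elements are vacuous here, since any two even-order elements share $\langle y\rangle$ and so cannot have trivial intersection. Thus the only pairs forced through $e$ are pairs of $p$-base elements generating distinct order-$p$ subgroups. Using \Cref{gtf_CLT}, two $p$-base elements have trivial intersection precisely when their unique order-$p$ subgroups differ, so $B_p$ splits into classes indexed by the order-$p$ subgroups of $S_p$; when $S_p$ is cyclic there is a single class (indeed $B_p\cup\{e\}$ then induces a clique and there are no hard pairs), and the real work is the case where $S_p$ is non-cyclic.

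For the construction I would use $e$ and $y$ as hubs. Because $e$ and $y$ both dominate the even-order elements, I orient the even part with the two-dominating-vertex gadget: put $(e,y)$ and, for each even $v\neq y$, the triangle $v\to e$, $y\to v$; this already yields directed paths of length at most $3$ among $\{e,y\}\cup B_2\cup NB$. The $p$-base elements are attached through $e$ and through non-base bridges, the key algebraic device being that for a $p$-base $w$ the element $\tilde w=yw\in NB$ is adjacent to $w$, to $y$, and to $e$ (by \Cref{nilpotent} and \Cref{direct product of co-prime order}), and that for $w\in B_p$ and a $2$-base $v$ the four vertices $w,\,wv,\,v,\,e$ induce a $4$-cycle (the chord $\{w,v\}$ being absent), which I orient as a directed $C_4$-gadget $w\to wv\to v\to e\to w$ connecting $w$ and $v$ within distance $2$ in both directions. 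Placing $B_p$ on the out-side of $e$, orienting $B_p\to NB\to B_2$ along these bridges, and realizing the forced $e$-routing of a hard $p$-base pair $\{w,w'\}$ by the length-$3$ route $w\to\tilde w\to e\to w'$ (the ``ejector'' role of the lift) should bring all but one family of pair types to distance at most $3$. I would then check the pair types one by one and finally verify the generalized-quaternion subcase of $S_2$, where $NB$ and the lifts take a slightly different shape but the same bridging elements remain available (here \Cref{fact: gen quat} and the orientation of $Pow(Q_{2^n})$ from \Cref{od of all p-groups} serve as a template).

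The main obstacle I anticipate is the cross direction from $B_p$ to the non-base elements when $S_p$ is non-cyclic. Because $B_p$ meets the rest of the graph essentially only through $e$ and through those non-base elements whose $p$-part is comparable to $w$, a naive in/out split at $e$ leaves, for a $p$-base element $w$ and a non-base element $v$ whose $p$-part generates a different order-$p$ subgroup than $w$, exactly the configuration whose only common neighbour is $e$; in the straightforward orientation above this pushes $w\to v$ to distance $4$. Resolving this is the crux: one must choose the in/out partition at $e$ together with the orientation of the non-base bridges so that every such ``incompatible'' target has a length-$2$ in-route from $e$, while simultaneously keeping the hard $p$-base pairs, which are themselves forced through $e$, within distance $3$. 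Since by \Cref{od of all p-groups} the internal power graph $Pow(S_p)$ of a non-cyclic odd $p$-group already has oriented diameter $4$, this is precisely the step where the extra vertices $y$ and the non-base bridges must be deployed in an essential and carefully coordinated way, and it is the heart of the proof.
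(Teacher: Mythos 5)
There is a genuine gap, and you name it yourself: you identify the pairs consisting of a $p$-base element $w$ and a non-base element $v$ with incompatible $p$-parts, observe that your orientation leaves $d(w,v)=4$, and then declare that resolving this ``is the heart of the proof'' without resolving it. A plan whose hardest step is deferred is not a proof. The difficulty is real in your setup: with $B_p$ on the out-side of $e$, the only way from $w$ into $NB$ is through lifts $wv$ sharing $w$'s $p$-part, and $e$-avoiding detours of length $3$ such as $w\,(wy)\,y\,v$ clash with your even-part triangles (your rule ``$y\to v$ for every even $v\neq y$'' orients $\{y,wy\}$ as $(y,wy)$, while your $C_4$-gadget $w\to wv\to v\to e\to w$ with $v=y$ demands $(wy,y)$ — the construction is not even well-defined as stated). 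Note also that the in/out split you chose at $e$ is the opposite of the one that works: the paper sends the $p$-base elements \emph{into} $e$ and sends $e$ \emph{out} to (most of) the non-base elements, which is exactly what makes $B_p\to NB$ routable in three steps.

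The missing idea is a structural reduction that dissolves your crux before any orientation is built. If $G$ has no maximal cyclic subgroup of order $2p^{\beta}$, one is done by \Cref{nil: no mcs of order 2pk}. Otherwise such a subgroup exists, and \Cref{obs mcs} together with the uniqueness of the involution $x$ (which lies in every maximal cyclic subgroup, since these all have even order by \Cref{nilpotent}) forces \emph{every} maximal cyclic subgroup to have order $2p^{\gamma}$; hence $m=1$ and $G\cong\ZZ_2\times S_p$. You instead carry along a general cyclic or generalized quaternion $S_2$, which is precisely what makes your case analysis unmanageable. In the reduced situation each base class $M_{ij}$ of order $p^i$ is matched to a unique non-base class $N_{ij}$ of order $2p^i$, the graph is a union of blocks $C_{ij}=M_{ij}\cup N_{ij}$ glued along $C_0=\{e,x\}$, and the orientation that sends $M_{ij}$ and one marked element $a_{ij}\in N_{ij}$ into $e$, sends $e$ out to $x$ and to $N_{ij}\setminus\{a_{ij}\}$, sends $x$ out to $a_{ij}$, and orients $N_{ij}\to M_{ij}$, gives every vertex an out-edge to $e$ or $x$ and lets $\{e,x\}$ reach every vertex in at most two steps — which settles all pair types, including the ones you could not.
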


\noindent \textbf{Proof of \Cref{nil: main result}:} If $G$ does not satisfy any of the conditions (a)-(d), then by \Cref{nil: OD strictly greater than 3}, $OD(Pow(G))=4$. Now consider the opposite direction. The case when $G$ satisfies condition (a) is handled in \Cref{nil: two odd primes}. Now note that for the remaining cases, i.e., when $G$ satisfies condition (b) or (c) or (d), it is enough to consider $|G|=2^mp^n$, where $p$ is an odd prime and $m,n \geq 1$. Hence, by applying \Cref{nil: no mcs of order 2pk}, \Cref{nil: one p-order} and \Cref{nil: one 2-order}, the oriented diameter of $Pow(G)$ is $3$ when $G$ satisfies condition (b), (c) and (d) respectively.
\hfill $\square$

\vspace{0.2cm}

We now prove \Cref{nil: two odd primes} and \Cref{nil: one 2-order} in the rest of this section. The proof techniques of \Cref{nil: no mcs of order 2pk} and \Cref{nil: one p-order} are almost similar to \Cref{nil: two odd primes}, and hence, we have put the proofs of these two lemmas in \Cref{app_nil_no mcs of order 2pk} and \Cref{app_nil_one p-order} respectively. Now due to \Cref{obs od is at most diam of partial orient} and \Cref{nil: od not 2}, in order to prove each of the four lemmas, it is enough to give a partial orientation of diameter 3. The partial orientations used in \Cref{nil: two odd primes}, \Cref{nil: no mcs of order 2pk} and \Cref{nil: one p-order} are different but involve some common partial orientations, namely $\OO_1,\OO_2$ and $\OO_3$. These common partial orientations are described in \Cref{Remark_2} below. In \Cref{lem: pathtwo}, we prove that $\OO_2$ and $\OO_3$ themselves can establish directed paths of length at most $2$ between certain sets of vertices. In each of the three lemmas: \Cref{nil: two odd primes}, \Cref{nil: no mcs of order 2pk} and \cref{nil: one p-order}, we augment $\cup_{i=1}^3 \OO_i$ with suitable partial orientations. Whereas, in \Cref{nil: one 2-order} we design a completely different partial orientation.

\begin{construction}\label{Remark_2}
Let $G \notin \Gprime$ be a finite non-cyclic nilpotent group. The descriptions of partial orientations $\OO_1,\OO_2,\OO_3$ of $Pow(G)$ are as follows:    
\end{construction}
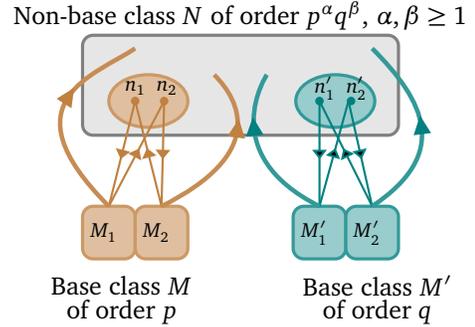
\begin{wrapfigure}{r}{0.45\textwidth}

    \centering
        \begin{tikzpicture}[scale=0.35]
        
        \coordinate (a) at (2,7);
        \coordinate (b) at (3,11);
        \coordinate (c) at (4,7);
        \coordinate (d) at (4.1,11);
        \coordinate (e) at (10,7);
        \coordinate (f) at (10,11);
        \coordinate (g) at (12,7);
        \coordinate (h) at (11.1,11);

        \filldraw[color=black!50, fill=black!10, very thick][rounded corners] (1,9.7) rectangle (13,13.5);
         \filldraw[color=brown!80, fill=brown!50, very thick][rounded corners] (1,5) rectangle (3,7);
        \filldraw[color=brown!80, fill=brown!50, very thick][rounded corners] (3,5) rectangle (5,7);
        \filldraw[color=teal!80, fill=teal!40, very thick][rounded corners] (9,5) rectangle (11,7);
        \filldraw[color=teal!80, fill=teal!40, very thick][rounded corners] (11,5) rectangle (13,7);

         \filldraw[color=brown!80, fill=brown!50, very thick][] (3.5,11) ellipse (1.5cm and 1cm);
         \filldraw[color=teal!80, fill=teal!40, very thick][] (10.6,11) ellipse (1.5cm and 1cm);

        \filldraw [brown] (3,11) circle(4pt);
        \filldraw [brown] (4.1,11) circle(4pt);
        \filldraw [teal] (10,11) circle(4pt); 
        \filldraw [teal] (11.2,11) circle(4pt); 

        \draw (3,11.5) node[][scale=0.8] {$n_1$};
        \draw (4.2,11.5) node[][scale=0.8] {$n_2$};
         \draw (10.2,11.5) node[][scale=0.8] {$n_1'$};
        \draw (11.4,11.5) node[][scale=0.8] {$n_2'$};

        \draw (1,6) node[right][scale=0.8] {$M_1$};
        \draw (3,6) node[right][scale=0.8] {$M_2$};
        \draw (9,6) node[right][scale=0.8] {$M_1'$};
        \draw (11,6) node[right][scale=0.8] {$M_2'$};
        
        \draw[brown, thick] (a) -- (b) node[ currarrow, pos=0.5, xscale=0.6, sloped, scale=-1, color=brown] {} ;
        \draw[brown, thick] (b) -- (c) node[ currarrow, pos=0.5, xscale=0.6, sloped, scale=-1, color=brown] {} ;
        \draw[brown, thick] (c) -- (d) node[ currarrow, pos=0.5, xscale=0.6, sloped, scale=-1, color=brown] {} ;
        \draw[brown, thick] (d) -- (a) node[ currarrow, pos=0.5, xscale=-0.6, sloped, scale=-1, color=brown] {} ;

       \draw[teal,thick] (e) -- (f) node[ currarrow, pos=0.5, xscale=-0.6, sloped, scale=1] {} ;
       \draw[teal,thick] (f) -- (g) node[ currarrow, pos=0.5, xscale=-0.6, sloped, scale=1] {} ; 
       \draw[teal,thick] (g) -- (h) node[ currarrow, pos=0.5, xscale=0.6, sloped, scale=1] {} ; 
       \draw[teal,thick] (h) -- (e) node[ currarrow, pos=0.5, xscale=0.6, sloped, scale=1] {} ;

        \draw[ line width=0.6mm,color=brown!90] (2,7) .. controls (-2,11) and (1.5,12) .. (3,13) node[ currarrow, pos=0.5, xscale=1, sloped, scale=1, color=brown] {};
        \draw[ line width=0.6mm,color=brown!90] (4,7) .. controls (8,9) and (7,11) .. (6,12) node[ currarrow, pos=0.5, xscale=1, sloped, scale=1, color=brown] {};
        
         \draw[ line width=0.6mm,color=teal!80] (10,7) .. controls (6,9) and (8,12) .. (8,12) node[ currarrow, pos=0.45, xscale=-1, sloped, scale=1, color=teal] {};
        \draw[ line width=0.6mm,color=teal!80] (12,7) .. controls (15,9) and (14,11) .. (12,13) node[ currarrow, pos=0.5, xscale=-1, sloped, scale=1, color=teal] {};

        \draw (7,14.25) node[][scale=1] {Non-base class $N$ of order $p^{\al}q^{\be}$, $\al,\be \geq 1$};
        \draw (2.5,4) node[][scale=1] {Base class $M$};
        \draw (2.5,3) node[][scale=1] {of order $p$};
        \draw (12.2,4) node[][scale=1] {Base class $M'$}; 
        \draw (12.2,3) node[][scale=1] {of order $q$};

         \end{tikzpicture}
        \caption{Illustration of $\OO_2$. The directed edges of $E(\{n_1,n_2\}, M_1 \cup M_2)$ and $E(\{n'_1,n'_2\}, M'_1 \cup M'_2)$ form two $C_4$-gadgets.}
         \label{fig:gmgn}
\end{wrapfigure}

 $\OO_1$ : From all $u \in NB$, we orient the edges towards $e$ as $(u,e)$. Also, from $e$, we orient the edges towards all $u \in B$ as $(e,u)$.

$\OO_2$ :  At first, we arbitrarily partition each base class $M$ of odd prime order into two non-empty subsets $M_1$ and $M_2$. Let $N$ be a non-base class and $p$ be an odd prime divisor of the order of $N$. Due to \Cref{gtf_CLT}, $N$ is adjacent to exactly one base class $M$ of order $p$.  
At first, we mark two elements of $N$ as $n_{1},n_{2}$ (choices of $n_1$ and $n_2$ depend on $M$, as discussed in the note below). 
        Now, we make directed $4$-cycles in the edges of $E(N,M)$ (see \Cref{fig:gmgn}) as follows:  For all $u_1 \in M_1$ and $u_2 \in M_2$ we put $(n_1,u_1)$, $(u_1,n_2)$, $(n_2,u_2)$, $(u_2,n_1)$ in $\OO_2$. We call the directed subgraph formed by the directed edges of $E(\{n_1,n_2\},M_1\cup M_2)$ due to $\OO_2$ - `$C_4$-\textit{gadget}'. This naming is due to the presence of several directed $C_4$ in $E(\{n_1,n_2\},M_1\cup M_2)$ after $\OO_2$. Moreover we call $n_1,n_2$ the \textit{gadget anchor points} in $N$ for $M$.
        Now, the edges in $E(M,N)$ of the form $\{u,v\}$, where $u\in M$ and $v \in N\setminus\{n_1,n_2\}$ are  oriented as $(u,v)$.

        \textit{Note:}  While introducing  $\OO_2$, we choose a disjoint pair of gadget anchor points in $N$ for each base class  $M$ of odd prime order adjacent to $N$. This is possible as the number $k$ of base classes of odd prime order adjacent to $N$ equals the number of odd prime divisors of $r$, where $r$ is the order of $N$ and also noting that $|N|=\phi(r)>2^k\geq 2k$.

$\OO_3$: From any base element of order $p^{\al}$, $\al \geq 2$, where $p$ is an odd prime in $\pi(G)$, we orient the edges towards all the adjacent non-base elements.
\hfill $\lhd$

\begin{lemma}
\label{lem: pathtwo}
    Let $G$ be a finite non-cyclic nilpotent group such that $|G|$ has two odd prime factors $p$ and $q$. Then using partial orientations $\OO_2$ and $\OO_3$ as stated in \Cref{Remark_2}, we have the following:
    
    (1) There is a directed path of length $2$ between any vertex of a base class of order $p$ and any vertex of a base class of order $q$ using $\OO_2$, where $p$ and $q$ are distinct.

    (2) There is a directed path of length at most $2$ from any vertex of a base class of order $p^{\al}$, $\al \geq 2$ to any vertex of a base class of order $q$, using $\OO_3$ and $\OO_2$, where $p$ and $q$ may or may not be distinct.

\end{lemma}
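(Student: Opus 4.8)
The plan is to route every required directed path through the $C_4$-gadgets of $\OO_2$, exploiting the \emph{gadget anchor points}. Recall from \Cref{Remark_2} that whenever a base class $M'$ of odd prime order $q$ is adjacent to a non-base class $N$, the orientation $\OO_2$ fixes a pair of anchors $n'_1,n'_2\in N$ (disjoint from the anchors chosen in $N$ for every other adjacent base class), together with a partition $M'=M'_1\sqcup M'_2$, and puts $(n'_1,u'_1)$ and $(n'_2,u'_2)$ in $\OO_2$ for all $u'_1\in M'_1$, $u'_2\in M'_2$. Hence for any target $b\in M'$ there is an anchor $n'_{i(b)}\in\{n'_1,n'_2\}$ with $(n'_{i(b)},b)\in\OO_2$. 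Both parts then reduce to the same task: produce a directed edge from the source $a$ into the anchor $n'_{i(b)}$ of a suitably chosen $N$, and compose the two edges to get the path $a\,n'_{i(b)}\,b$ of length $2$.

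For part (1) I would take $a$ in a base class $M=[a]$ of order $p$ and $b$ in a base class $M'=[b]$ of order $q$ with $p\neq q$. Since $G$ is nilpotent and $p,q$ are coprime, \Cref{nilpotent} gives $o(ab)=pq$ and $\langle a\rangle,\langle b\rangle\le\langle ab\rangle$, so $N:=[ab]$ is a non-base class of order $pq$, and by \Cref{gtf_CLT} its order-$p$ and order-$q$ base classes are exactly $M$ and $M'$. Writing $n_1,n_2$ for the anchors of $N$ for $M$ and $n'_1,n'_2$ for those for $M'$, the disjointness guaranteed in \Cref{Remark_2} gives $n'_1,n'_2\in N\setminus\{n_1,n_2\}$. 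Because $\OO_2$ orients every edge from $M$ into $N\setminus\{n_1,n_2\}$ away from $M$, we get $(a,n'_1),(a,n'_2)\in\OO_2$, and combining with $(n'_{i(b)},b)\in\OO_2$ yields $a\to n'_{i(b)}\to b$. Swapping the roles of $M$ and $M'$ (legitimate since $\OO_2$ treats all odd-prime base classes symmetrically and the two anchor pairs are disjoint) gives the reverse length-$2$ path $b\to a$.

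For part (2) the source $a$ has order $p^{\al}$ with $\al\ge 2$, and the key improvement is that $\OO_3$ orients \emph{every} edge from $a$ to an adjacent non-base vertex away from $a$; so it suffices to find a non-base class $N$ adjacent to $M'=[b]$ all of whose vertices are adjacent to $a$. If $p\neq q$, I would take $N:=[ab]$ of order $p^{\al}q$ (again via \Cref{nilpotent}); then $a\in\langle ab\rangle$, so $a$ is adjacent to every vertex of $N$ and in particular $(a,n'_1),(a,n'_2)\in\OO_3$, which together with $(n'_{i(b)},b)\in\OO_2$ closes the path. If $p=q$, a common non-base neighbour of $a$ and $b$ exists only when $\langle b\rangle$ is the unique order-$p$ subgroup of $\langle a\rangle$ (\Cref{gtf_CLT}); in that case I would let $N$ be the GE-class of a maximal cyclic subgroup containing $a$, which by \Cref{nilpotent} has order divisible by every prime in $\pi(G)$, hence is non-base with $\langle a\rangle\le\langle z\rangle$ for all $z\in N$ and with $M'=[b]$ as its order-$p$ base class; the same two edges give the path.

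The routine part is the adjacency bookkeeping (identifying $M$, $M'$ as the prime-order base classes adjacent to $N$ via \Cref{gtf_CLT}, and checking that the first edge is oriented away from $a$). The genuinely delicate point — and where I expect the main obstacle to lie — is that the first edge must land on a gadget \emph{anchor} of $M'$ and not on an ordinary vertex of $N$, since an ordinary vertex of $N$ has all its $M'$-edges oriented \emph{into} $M'$ and so cannot serve as the middle vertex. In part (1) this is precisely why the disjointness of anchor pairs in \Cref{Remark_2} is indispensable: it forces $n'_1,n'_2\notin\{n_1,n_2\}$, so the outgoing edges of $a$ reach them. In part (2) the stronger orientation $\OO_3$ removes this difficulty, since it makes every vertex of $N$, anchors included, reachable from $a$ — at the cost of needing $N$ to contain $\langle a\rangle$, which dictates the $p=q$ case analysis above.
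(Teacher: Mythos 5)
Your proof of part (1) is correct and essentially identical to the paper's: both route the path through the non-base class $[mm']$ and use the disjointness of the two anchor pairs guaranteed in \Cref{Remark_2}, so that the source's outgoing edges (which go into the non-anchor vertices of $N$) reach an anchor of the target's gadget, which then points into the target. For part (2) the paper only remarks that the argument is ``similar,'' whereas you supply the details; your case split for $p=q$ is a genuine addition, and it correctly exposes that the claim is only provable there when $\langle b\rangle\leq\langle a\rangle$ (otherwise $a$ and $b$ have no common neighbour other than $e$, and $\OO_2,\OO_3$ orient no edges incident to $e$) --- a hypothesis that does hold wherever the paper invokes this case (e.g.\ when $G$ has a unique subgroup of order $p$) but is not made explicit in the lemma statement.
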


\begin{proof}
    We only prove (1) as (2) is similar.  Let $M$ and $M'$ be two base classes of order $p$ and $q$, respectively. Then, we need to show that between any pair of vertices $m \in M$ and $m' \in M'$, there is a directed path of length $2$. 
    As $G$ is a nilpotent group, $mm'$ is a non-base element (by \Cref{nilpotent}). Let $N$ be the non-base class $[mm']$. Since $mm'$ is adjacent to both $m$ and $m'$ in $Pow(G)$, $N$ is adjacent to both the base classes $M$ and $M'$ (by using \Cref{equivalence classes form a clique}). Hence, we have oriented the edges in $E(N,M)$ and $E(N,M')$ as described in $\OO_2$. 

    Let $n_1,n_2$ be the gadget anchor points in $N$ involved in the $C_4$-gadget with $M$. Also, let $n_3, n_4$ be the gadget anchor points in $N$ involved in the gadget with $M'$. From our discussion in \Cref{Remark_2}, the vertices $n_1,n_2,n_3,n_4$ are distinct. Now, for any $m' \in M'$, we have either $(n_3,m')$ or $(n_4,m')$. Also, as $n_3$ and $n_4$ are not involved in the $C_4$-gadget with $M$, we have $(m,n_3)$ and $(m,n_4)$ for all $m \in M$. Hence, we have a directed path of length $2$ from any $m \in M$ to any $m' \in M'$ via $n_3$ or $n_4$. \end{proof}

\begin{fact}\label{remark common element}
    Let $G$ be a finite nilpotent group such that the set of prime divisors $\pi(G)$ contains $2$ and at least two distinct odd primes $p$ and $q$. Then using \Cref{nilpotent}, any element of order $2^{\al}, \ \al \geq 1$ and any element $p^{\be}, \ \be \geq 1$ have one common neighbour of order $2^{\al}p^{\be}q$ in $Pow(G)$.
\end{fact}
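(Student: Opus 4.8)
The plan is to construct an explicit common neighbour. Write $a$ for the given element of order $2^{\al}$ and $b$ for the given element of order $p^{\be}$. Since $G$ is nilpotent, it is the internal direct product of its Sylow subgroups $S_2 \times S_p \times S_q \times \cdots$, and any element of prime power order must lie entirely in the corresponding Sylow factor (its projections onto the other factors have order dividing a number coprime to that prime, hence are trivial); thus $a \in S_2$ and $b \in S_p$. As $q \in \pi(G)$, the factor $S_q$ is nontrivial and so contains an element $g$ of order $q$ by Cauchy's theorem. I would take $c = abg$ as the candidate common neighbour.

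First I would compute $o(c)$. The elements $a, b, g$ lie in distinct Sylow factors, so they pairwise commute and have pairwise coprime orders; applying \Cref{nilpotent} twice (to $a$ and $b$, and then to $ab$ and $g$) yields $o(c) = 2^{\al} p^{\be} q$. Next I would check that $\{a,c\}$ and $\{b,c\}$ are edges of $Pow(G)$, which by \Cref{def: power graph} reduces to showing $a \in \langle c \rangle$ and $b \in \langle c \rangle$ (note $o(c) > o(a), o(b)$, so $c$ cannot lie in $\langle a \rangle$ or $\langle b \rangle$, and the containment must go the other way). Identifying $c$ with the tuple $(a, b, g, e, \ldots)$ in the direct product, the cyclic group $\langle c \rangle$ has order $2^{\al} p^{\be} q = |\langle a \rangle|\cdot|\langle b\rangle|\cdot|\langle g\rangle|$ and surjects onto each of $\langle a \rangle$, $\langle b \rangle$, $\langle g \rangle$; a comparison of orders forces $\langle c \rangle = \langle a \rangle \times \langle b \rangle \times \langle g \rangle$, whence $a$ and $b$ both lie in $\langle c \rangle$. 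Equivalently, one may cite \Cref{direct product of co-prime order}, or solve $c^k = a$ and $c^k = b$ directly by the Chinese Remainder Theorem applied to the exponent $k$ modulo $2^{\al}$, $p^{\be}$ and $q$.

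This produces a common neighbour $c$ of $a$ and $b$ with $o(c) = 2^{\al} p^{\be} q$, as claimed. The construction is essentially immediate from the direct-product decomposition of a nilpotent group, and the only point requiring any care is the membership $a, b \in \langle c \rangle$; since the component orders are pairwise coprime this splitting of $\langle c \rangle$ is exact, so I do not anticipate a genuine obstacle.
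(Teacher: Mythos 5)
Your proposal is correct and matches the paper's intended argument: the paper justifies this fact purely by appeal to \Cref{nilpotent}, i.e., the common neighbour is the product $abg$ of the two given elements with an element $g$ of order $q$, whose order is $2^{\al}p^{\be}q$ and which generates both $a$ and $b$ by coprimality. You have simply written out the details (Sylow decomposition, Cauchy for $g$, and the CRT/direct-product verification that $a,b\in\langle c\rangle$) that the paper leaves implicit.
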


  \noindent \textbf{Proof of \Cref{nil: two odd primes}:} Due to \Cref{nil: od not 2} and \Cref{obs od is at most diam of partial orient}, it is sufficient to give a partial orientation of $Pow(G)$ with diameter $3$. For that purpose, if $|G|$ is even, then along with partial orientations $\OO_1,\OO_2,\OO_3$ as stated in \Cref{Remark_2}, we use the following partial orientations (see \Cref{fig:lemma 23}). If $|G|$ is odd, we will see below that the partial orientations $\OO_1,\OO_2 , \OO_3$ are sufficient.

    $\OO_4$: From any base element of order $2$, we orient the edges towards all the adjacent non-base elements of order $2^{\alpha}p^{\beta}$, where $p$ is any odd prime in $\pi(G)$ and $\alpha, \beta \geq 1$.
    
      $\OO_5$: Let $N$ be a non-base class of order $2^{\alpha}t$, where $\al\geq 2$ and $t \ (\neq 1)$ is co-prime to $2$. Also, let $M$ be the unique base class of order $2^2$ that is adjacent to $N$. We orient the edges in $E(N,M)$ similarly to $\OO_2$ as stated in \Cref{Remark_2}. Here also, the choices of gadget anchor points of $N$ for $M$ depend on $M$ as in \Cref{Remark_2}. In other words,  while introducing $\OO_5$ in a non-base class $N$ of order $2^{\alpha}t$, where $\al\geq 2$ and $2 \nmid t$, we select a pair of gadget anchor points $\{n_1,n_2\}$ in $N$ for the base class of order $2^2$ adjacent to $N$ in such a way that neither $n_1$ nor $n_2$ has been used as gadget anchor point in $N$ while introducing $\OO_2$. This is possible as the number $k$ of base classes of odd prime order adjacent to $N$ equals the number of prime divisors of $t$, and $|N|=\phi(2^{\al}t)>2^{(k+1)}\geq 2(k+1)$.

      $\OO_6$: From any base element of order $2^{\al}, \al \geq 3$, we orient the edges towards all the adjacent non-base elements. Note that these non-base elements are of order $2^{\delta}t$, where $\delta \geq \al$ and $t \ (\neq 1)$ is co-prime to $2$.
   
       $\OO_7$: From any non-base element of order $2p^{\al}q^{\be}$, $\al,\be \geq 1 $, where $p$ and $q$ are any two distinct odd primes in $\pi(G)$, we orient the edge towards the adjacent unique base element of order $2$.
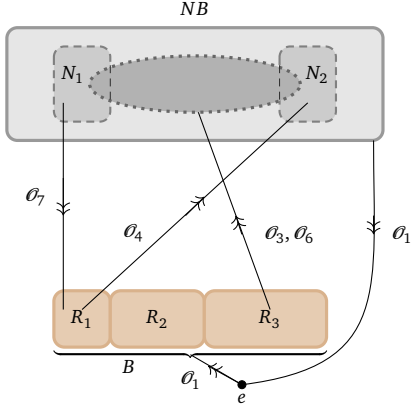
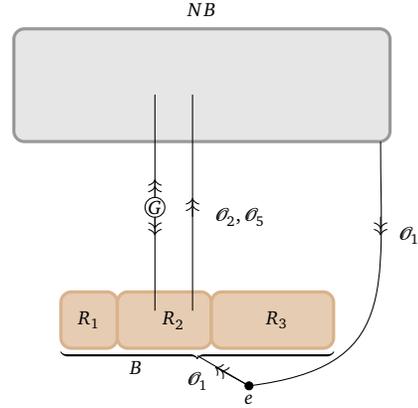
\begin{figure}[hpt!]
\begin{subfigure}[b]{0.47\textwidth}
\centering
        \begin{tikzpicture}[scale=0.25]
        
        \coordinate (a) at (2,7);
        \coordinate (b) at (3,11);
        \coordinate (c) at (4,7);
        \coordinate (d) at (4.1,11);
        \coordinate (e) at (10,7);
        \coordinate (f) at (10,11);
        \coordinate (g) at (12,7);
        \coordinate (h) at (11.1,11);

        \filldraw[color=black!40, fill=black!10, very thick][rounded corners] (2.5,20) rectangle (22.5,14);

         \filldraw[color=brown!60, fill=brown!40, very thick][rounded corners] (5,6) rectangle (8,3);
        \filldraw[color=brown!60, fill=brown!40, very thick][rounded corners] (8,6) rectangle (13,3);
        \filldraw[color=brown!60, fill=brown!40, very thick][rounded corners] (13,6) rectangle (19.5,3);

        \filldraw[color=black!50, fill=black!20, thick, densely dashed][rounded corners] (5,19) rectangle (8,15);
        \filldraw[color=black!50, fill=black!20, thick, densely dashed][rounded corners] (17,19) rectangle (20,15);

        \path [draw] (15,1) -- (12.26,2.6);
        \path [draw] (5.5,5) -- (5.5,16);
        \path [draw] (6.5,5) -- (18.5,16);
        \path [draw] (16.5,5) -- (12.5,16);

        \path [draw] (15,1) .. controls (23,2) and (22,6) ..  (22,14);
        \path [draw,<<-][] (22,9) -- (22,10);
        
         \path [draw,->>][] (5.5,12) -- (5.5,10);
        \path [draw,->>][] (12,10) -- (13,11);
        \path [draw,->>][] (15,9) -- (14.67,10);

   
                   \filldraw [black] (15,1) circle(6pt);

         \filldraw[color=black!60, fill=black!30, very thick, dotted] (12.5,17) ellipse (5.6cm and 1.5cm);
         
          \draw [color=black!80, densely dashed] (8,18) -- (8,16); 
          \draw [color=black!80, densely dashed] (17,18) -- (17,16);

  \draw [decorate,
    decoration = {calligraphic brace}][thick] (19.5,2.8) --  (5,2.8);

        \draw (12.5,21) node[][scale=0.7] {$NB$};
        \draw (9,2.7) node[below][scale=0.7]{$B$};
        \draw (6.5,4.5) node[][scale=0.7]{$R_1$};
        \draw (10.5,4.5) node[][scale=0.7]{$R_2$};
        \draw (16.5,4.5) node[][scale=0.7]{$R_3$};
        \draw (6,17.5) node[][scale=0.7]{$N_1$};
        \draw (19,17.5) node[][scale=0.7]{$N_2$};
        \draw (15,0.3) node[][scale=0.7]{$e$};
        \draw (4,11) node[][scale=0.7]{$\OO_7$};
        \draw (9.2,9.1) node[][scale=0.7]{$\OO_4$};
        \draw (17.5,9) node[][scale=0.7]{$\OO_3, \OO_6$};
        \draw (23.5,9) node[][scale=0.7]{$\OO_1$};
        
        \draw (12.25,1.25) node[][scale=0.7]{$\OO_1$};
         \path [draw,->>][] (15,1) -- (13.2,2);
              
         \end{tikzpicture}
        \caption{Connection between elements of $R_1$ and their adjacent elements in $N_1$ and $N_2$, where  $N_1=\{v \in NB | o(v)=2p^{\al}q^{\be},\ \al,\be \geq 1, \text{ where $p$ and $q$ are any odd primes in $\pi(G)$} \}$ and $N_2=\{v \in NB | o(v)=2^{\al}p^{\be},\ \al, \be \geq 1, \text{ where $p$ is an odd prime in $\pi(G)$}\}$.\\ Connection between elements of $R_3$ and their adjacent elements in $NB$. }
         \label{fig:lemma 23_a}
\end{subfigure} \hspace{1 cm}
\begin{subfigure}[b]{0.47\textwidth}
\centering
   \begin{tikzpicture}[scale=0.25]

        \filldraw[color=black!40, fill=black!10, very thick][rounded corners] (2.5,20) rectangle (22.5,14);

        \filldraw[color=brown!60, fill=brown!40, very thick][rounded corners] (5,6) rectangle (8,3);
        \filldraw[color=brown!60, fill=brown!40, very thick][rounded corners] (8,6) rectangle (13,3);
        \filldraw[color=brown!60, fill=brown!40, very thick][rounded corners] (13,6) rectangle (19.5,3);


        \path [draw] (15,1) -- (12.26,2.6);
%
%
%
%

\path [draw] (10,5) -- (10,16.5);
            \path [draw] (12,5) -- (12,16.5); 
        \path [draw,->>][] (12,10) -- (12,11);
                \path [draw] (15,1) .. controls (23,2) and (22,6) ..  (22,14);
        \path [draw,<<-][] (22,9) -- (22,10);
         \filldraw [white] (10,10.5) circle(15pt);
         \draw [] (10,10.5) circle(15pt);  
         \draw (10,10.5) node[][scale=0.7] {$G$};
         \path [draw,->>][] (10,11) -- (10,12);
        \path [draw,->>][] (10,10) -- (10,9);
                  \filldraw [black] (15,1) circle(6pt);

  \draw [decorate,
    decoration = {calligraphic brace}][thick] (19.5,2.8) --  (5,2.8);

        \draw (12.5,21) node[][scale=0.7] {$NB$};
        \draw (9,2.7) node[below][scale=0.7]{$B$};
        \draw (6.5,4.5) node[][scale=0.7]{$R_1$};
        \draw (11,4.5) node[][scale=0.7]{$R_2$};
        \draw (16.5,4.5) node[][scale=0.7]{$R_3$};
        \draw (15,0.3) node[][scale=0.7]{$e$};
        \draw (14.5,10) node[][scale=0.7]{$\OO_2,\OO_5$};
        \draw (23.5,9) node[][scale=0.7]{$\OO_1$};
         
         \path [draw,->>][] (15,1) -- (13.2,2);
        \draw (12.25,1.25) node[][scale=0.7]{$\OO_1$};

          \end{tikzpicture}
        \caption{Connection between non-base classes and their adjacent base classes belonging to $R_2$ via $C_4$-gadgets $\OO_2$ and $\OO_5$.\\
        The bi-directional arrow with an inscribed $G$ represents a connection using a $C_4$-gadget (introduced in $\OO_2$ and $\OO_5$).
        }
         \label{fig:lemma 23_b}
\end{subfigure}
\caption{ The directed  arrow (with double points \rotatebox[origin=c]{90}{$\rangle \rangle$}) from a set $A$ to $B$ represents that there is an oriented edge $(a,b)$ from any $a \in A$ to its any adjacent element $b\in B$.}
\label{fig:lemma 23}
\end{figure} 

\vspace{0.2cm}
  
We show an illustration of the introduced partial orientations in \Cref{fig:lemma 23}. The set $B$ is partitioned into three subsets as follows: (a) $R_1$: consisting of the elements of order $2$; (b) $R_2$: consisting of the elements of order $2^2$ and $p$, where $p$ is any odd prime in $\pi(G)$; (c) $R_3$: consisting of the elements of order $2^{\al}$, $\al\geq 3$, and $p^{\be}$, $\be \geq 2$, where $p$ is any odd prime in $\pi(G)$. Note that if $|G|$ is odd, then $G$ has no $2$-base element. Hence, the region $R_1$ does not exist, whereas $R_2$ and $R_3$ only contain $p$-base elements, where $p$ is an odd prime in $\pi(G)$.

\vspace{0.2cm}

    \textbf{Path directions:} 
    First, we list down some necessary observations, which can be argued similarly to the proof of \Cref{lem: pathtwo}. We also use \Cref{lem: pathtwo} for discussing path directions. In the following observations, $p$ is an odd prime in $\pi(G)$.

        \textit{Note 1:} 
        There is a directed path of length $2$ from any element of order $2^{\alpha}$ to any element of order $p$, using partial orientation $\OO_4$ (when $\alpha=1)$ or $\OO_5$ (when $\alpha=2)$ or $\OO_6$ (when $\alpha \geq 3)$ along with partial orientation $\OO_2$.

       \textit{Note 2:} 
       There is a directed path of length $2$ from any element of order $p^{\beta}, \ \beta \geq 2$ (or, of order $p$) to any element of order $2$ by noting \Cref{remark common element} and using $\OO_3$ (or $\OO_2$) together with $\OO_7$.
    
 Let $\Gamma=Pow(G)$ and $\OO$ denote the disjoint union of $\OO_1,\dots,\OO_7$ (or $\OO_1,\dots,\OO_3$ as required). Then,  we use the notation $\Gamma_{\OO}$ to denote the directed graph $(V(\Gamma),\OO)$. Moreover, let $d(a,b)$ (we use $d(a,b)$ instead of $d_{\Gamma_{\OO}}(a,b)$ as $\Gamma$ and $\OO$ are fixed in this context) denote the shortest distance from a vertex $a$ to a vertex $b$ in the directed graph $\Gamma_{\OO}$ and $d(a,S)=\min\{d(a,s) : s \in S\}$ denote the shortest distance from a vertex $a$ to a set $S$ in $\Gamma_{\OO}$. 

From \Cref{fig:lemma 23}, one can see that $d(v,e)=1$ for any non-base element $v$ and $d(e,u)=1$ for any base element $u$. This also implies that $d(v,u)\leq 2$, i.e., there is a directed path of length at most $2$ from any element $v\in NB$ to any element $u\in B$.

We claim that if $u \in B=R_1\cup R_2\cup R_3$, then $d(u,NB)=1$. For this, observe that if $u \in R_1$, then there exists some $v\in NB$ such that $(u,v) \in \OO_4$. Similarly, if $u \in R_2$, then there exists some $v \in NB$ such that $(u,v) \in \OO_2\cup \OO_5$ and if $u \in R_3$, then there exists some $v \in NB$ such that $(u,v) \in \OO_3 \cup \OO_6$. 

Noting $d(u,NB)=1$ for all $u\in B$ and $(v,e)\in \OO_1$ for all $v\in NB$, we have a directed path of length at most $2$ from any element of $B$ to $e$. Combining such a path with $(e,u') \in \OO_1$, where $u'$ is any element in $B$, we get a directed path of length at most $3$ between any two elements of $B$.

 Now, for any non-base element $v\in NB$, there exists at least one element $u \in R_2$ such that $(u,v)\in \OO_2$. Moreover, since $(e,u)\in \OO_1$ for all $u\in R_2 \subseteq B$, $d(e,v) = 2$ for all $v\in NB$. Now, as $(v',e)\in \OO_1$ for all $v'\in NB$, $d(v',v) \leq 1+d(e,v) = 3$, i.e., there is a directed path of length at most $3$ between any two elements in $NB$.


Now, we discuss the remaining case, i.e., when the source vertex $u$ is from $B$, and the destination vertex $v$ is from $NB$. Since $v$ is a non-base element, $o(v)$ always has at least one odd prime divisor, and hence there exists an element $a\in \langle v \rangle$ such that $o(a)$ is an odd prime. So the base class $[a]$ is in $R_2$ and participates in a $C_4$-gadget with the non-base class $[v]$ due to $\OO_2$ (see \Cref{fig:lemma 23}). Now, if $o(u)=2^{\alpha}$ ,$\alpha \geq 1$, then using Note 1, $d(u,a) \leq 2$ for all $a \in [a]$. Further using the $C_4$-gadget between $[a]$ and $[v]$, $d(u,v) \leq 3$. Now the case $o(u)=p^{\beta}$, $\beta \geq 1$ (where $p$ is an odd prime) is divided into two subcases according to the number of distinct odd prime divisors of $o(v)$. The first subcase is when $o(v)$ is divisible by at least two odd primes $p$ and $q$. Then, there exists an element $c \in \ang{v}$ of order $q$, and hence, there is a $C_4$-gadget between $[c]$ and $[v]$ due to $\OO_2$. Therefore, using the directed path of length $2$ from $u$ to any element of $[c]$ as described in \Cref{lem: pathtwo} and the gadget between $[c]$ and $[v]$, we have $d(u,v) \leq 3$. Now, consider the second subcase, i.e., when $o(u)=p^{\beta}$, $\beta \geq 1$ and $o(v)$ is divisible by only two primes $2$ and $p$. If $w$ is the (unique) element of order $2$ in $\ang{v}$, then Note 2 implies $d(u,w)\leq 2$. Moreover, since $w \in R_1$, $(w,v)$ belongs to $\OO_4$. This yields $d(u,v) \leq 3$ in this case.\hfill $\square$

\vspace{0.4cm}

\noindent \textbf{Proof of \Cref{nil: one 2-order}:} Due to \Cref{nil: od not 2} and \Cref{obs od is at most diam of partial orient}, it is sufficient to give a partial orientation of $Pow(G)$ with diameter $3$. The maximal cyclic subgroups of $G$ are of order $2^{\alpha}p^{\beta}$, where $1\leq \al \leq m$, $1\leq \be \leq n$ and $(\al,\be) \neq (m,n)$ (by \Cref{nilpotent}). Now, if $G$ has no maximal cyclic subgroup of order $2p^k$ for any $1 \leq k \leq n$, then we can use \Cref{nil: no mcs of order 2pk} to prove that $OD(Pow(G))=3$.

     Now we consider the case when $G$ has a maximal cyclic subgroup of order $2p^k$, for some $1\leq k \leq n$.
     Let $\ang{x}$ be the unique subgroup of order $2$ of $G$. Since every maximal cyclic subgroup of $G$ contains $\ang{x}$, each maximal cyclic subgroup of $G$ is of order $2p^{\beta}$, $1 \leq \beta \leq n$, by \Cref{obs mcs} (see \Cref{nn}). Note that, here $G=\ZZ_2\times S_p$, (where $S_p$ is the Sylow $p$-subgroup of $G$) due to Burnside's lemma (see \Cref{unique p-subgroup of a p-group}).

     Now we claim that in $G$, a base class $M_i$ of order $p^i$ is adjacent to exactly one non-base class $N_i$ of order $2p^i$ (see \Cref{adjacency of classes} for the definition of two GE-classes being adjacent). 
     One can verify this by using the facts that $G$ has a unique subgroup of order $2$, and the intersection of two cyclic subgroups is a cyclic subgroup. On the other hand, using \Cref{gtf_CLT}, one can verify that each $N_i$ is adjacent to exactly one $M_i$. So, there is a matching between the GE-classes of order $p^i$ and $2p^i$ for all $1 \leq i \leq n $ in $G$. Let $C_{ij} = M_{ij} \cup N_{ij}$, where $M_{ij}$ and $N_{ij}$ denote the $j$-th GE-class of order $p^i$ and $2p^i$ (since $G$ is not cyclic, $j>1$ for at least one $i$). Analogously, we match the elements $e$ and $x$ (recall that $x$ is the unique element of $G$ of order $2$) and put them in $C_0$. Now observe that $G$ can be viewed as a disjoint union of $C_0$ and the sets $C_{ij}$, where $1\leq i \leq n$ and $1< j$. We also partition each $N_{ij}$ in two non empty subsets $\{a_{ij}\}$, where $a_{ij}$ is an arbitrary element of $N_{ij}$ and $B_{ij}=N_{ij} \setminus \{a_{ij}\}$ (this can always be done since $|N_{ij}|\geq 2$). We now describe a partial orientation  $\OO$ in which we orient a subset of the edges in the subgraph induced by the set $C_0 \sqcup C_{ij}=C_0\sqcup(\{a_{ij}\}\sqcup B_{ij} \sqcup M_{ij})$, for each $i$ and each $j$ as follows:

         \vspace{0.2cm}
         
        \noindent $\OO$:  In this partial orientation, we put the following directed edges:
        
         (i) $(a_{ij},b)$, $(e,b)$ and $(b,x)$, for all $b \in B_{ij}$;
         
         (ii) $(a_{ij},e)$, $(e,x)$ and $(x,a_{ij})$;
         
         (iii) $(v,u)$, for all $v \in N_{ij}$ and for all $u \in M_{ij}$.
         
         See \Cref{fig: book like} for an illustration of $\OO$.

        \begin{figure}[hpt!]
\centering

        \begin{tikzpicture}[scale=0.65]
        
        \coordinate (e) at (0,0);
        
        \coordinate (2) at (-4,0);
        
        \coordinate (p) at (-0.5,4.5);
        \coordinate (p') at (0,3);
        \coordinate (p'') at (-0.5,3.5);
        
        \coordinate (b) at (-3.5,4.5);
        \coordinate (b') at (-3.5,4.2);
        \coordinate (b'') at (-3.3,4);
        \coordinate (b''') at (-3.7,4.2);
        
        \coordinate (a) at (-4.5,3.5);

        \draw[color=black!100, densely dashed][rounded corners] (-7,2.5) rectangle (3,6);
        \draw[color=black!100, densely dashed][rounded corners](-6,-1) rectangle (2,0.5);
        \filldraw [black] (-4,0) circle(2pt);
        \filldraw [black] (0,0) circle(2pt);
        \filldraw[color=brown!70, fill=brown!50, very thick](0,4) circle [radius=1.1];
        \filldraw[color=black!50, fill=black!20, very thick](-4,4) circle [radius=1.1];

        \draw (e) node[right] {$e$};
        \draw (2) node[left] {$x$};
        \draw (-2,-0.3) node[below] {Class of order $2$};
        \draw (0.5,5) node[above] {Class of order $p^i$};
        \draw (1,4) node[right] {$M_{ij}$};
         \draw (-4.5,5) node[above] {Class of order $2p^i$};
        \draw (-5,4) node[left] {$N_{ij}$}; 
        \draw (-4.5,3.7) node[above][scale=0.75] {$a_{ij}$};
        \draw (-3.5,4.7) node[left][scale=0.75] {$B_{ij}$};
        \draw[line width=0.3 mm,densely dashed, black] (-3.25,3.35) -- (-4.75,4.75) ;

        \draw (e) -- (2) node[ currarrow, pos=0.5, xscale=-1, sloped, scale=1.5] {} ;
        \draw (e) -- (p') [line width=0.5mm, color=black!70] node[ currarrow, pos=0.5, xscale=-1, sloped, scale=1.5] {} ;
        \draw (a) -- (p'') [line width=0.5mm, color=black!70] node[ currarrow, pos=0.6, xscale=1, sloped, scale=1.5] {} ;
        \draw (b) -- (p) [line width=0.5mm, color=black!70] node[ currarrow, pos=0.5, xscale=1, sloped, scale=1.5] {} ;
        \draw (a) -- (b''') [line width=0.5mm, color=black!70] node[ currarrow, pos=0.35, xscale=1, sloped, scale=1.5] {} ;
        \draw (a) -- (2) node[ currarrow, pos=0.5, xscale=-1, sloped, scale=1.5] {} ;
        \draw (b') -- (2)[line width=0.5mm, color=black!70] node[ currarrow, pos=0.5, xscale=-1, sloped, scale=1.5] {} ;
        \draw (a) -- (e) node[ currarrow, pos=0.5, xscale=1, sloped, scale=1.5] {} ;
        \draw (b'') -- (e)[line width=0.5mm, color=black!70] node[ currarrow, pos=0.5, xscale=-1, sloped, scale=1.5] {} ;
        
        \draw (2,0) node[right] {$C_0$};
        \draw (3,4) node[right] {$C_{ij}$};
        \end{tikzpicture}
        \caption{Illustration of $\OO$.}
         \label{fig: book like}
\end{figure}

    \textbf{Path directions:}    
     From \Cref{fig: book like}, it can be observed that, using $\OO$, there is a directed path of length at most $2$ between any vertex of $C_0$ and any vertex of $C_{ij}$, for any $i$ and any $j$. 
    
    Note that any vertex $c \in C_{ij}$ has an outward edge either $(c,e)$ to $e$ or $(c,x)$ to $x$ (Recall that $x$ is the unique element of order $2$.). Now, we want to exhibit a directed path of length at most $3$ between two vertices $c \in C_{ij}$ and $c' \in C_{i'j'}$ where $i,i',j,j'$ are non-zero indices and $i$ (respectively $j$) may or may not be equal to $i'$ (respectively $j'$). Without loss of generality, let $(c, e) \in \OO$ (The other case can be argued similarly.). Then, we can use the edge $(c,e)$ together with the path from $e$ to $c'$ to have a path from $c$ to $c'$ of length at most $3$. Hence, it is shown that there is a directed path of length at most $3$ between any two vertices of $C \setminus C_0$. To have a directed path of length at most $3$ between any two vertices of $C_0$, we use the directed $3$-cycle $(e,x), (x,a_{ij}), (a_{ij},e)$ for any $i$, $j$.\hfill $\square$

~

{\bf Algorithm:} Given a nilpotent group $G$, it is easy to compute the oriented diameter of $Pow(G)$ with the help of the characterization given in this paper. We can compute the orders of each element in time linear in $|G|$ \cite{kavitha2007linear}. Once that is done, checking if a group is cyclic is easy. Checking if $G$ has multiple subgroups of prime order $p$ boils down to checking if it has at least $p$ elements of order $p$. A cyclic group $\ang{x}$ is maximal if it is not properly contained in $\ang{y}$ for any $y$. This can be tested in polynomial time. We note that nilpotency can be tested in polynomial time \cite{seress1997introduction}. \hfill $\lhd$

\section{Oriented Diameter of Enhanced Power Graphs and Commuting Graphs}\label{epow and com}

As a consequence of our results so far, one can easily note the following results regarding the oriented diameter of two widely studied and related graph classes, namely enhanced power graphs and commuting graphs. First, we provide the definitions of these two graphs.

\begin{definition}
\label{epow}
The enhanced power graph of a group $G$, denoted by $EPow(G)$, is an undirected graph with vertex set $G$, in which two vertices $x$ and $y$ are adjacent if and only if they are in a common cyclic subgroup of $G$, i.e., there exists $z$ in $G$ such that $x,y \in \langle z \rangle$. 
\end{definition}

\begin{definition}
\label{commuting}
    The commuting graph of a group $G$, denoted by $Com(G)$, is an undirected graph with vertex set $G$, in which $\{x,y\}$ is an edge if $xy=yx$ under the group operation.
\end{definition}

From definitions, one can easily note that $E(Pow(G)) \subseteq E(EPow(G)) \subseteq E(Com(G))$. Hence for a finite group, $OD(Com(G)) \leq OD(EPow(G)) \leq OD(Pow(G))$. Therefore, if $OD(Pow(G))\leq d$, then $d$ is an immediate upper bound for the oriented diameter of $Com(G)$ and as well as $EPow(G)$. Hence, from \Cref{4 upper bound}, we have the following straightforward corollary.

\begin{corollary}
    Let $G$ be a finite group without any maximal cyclic subgroup of order 2. Then, the oriented diameter of $EPow(G)$ and $Com(G)$ is at most $4$.
\end{corollary}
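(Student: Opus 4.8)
The plan is to reduce the statement directly to \Cref{4 upper bound} through two observations. First I would record the edge containments $E(Pow(G)) \subseteq E(EPow(G)) \subseteq E(Com(G))$, which follow immediately from the definitions. If $\{x,y\}$ is an edge of $Pow(G)$, then $y=x^m$ (or $x=y^m$) for some integer $m$, so $x$ and $y$ both lie in the cyclic subgroup $\langle x\rangle$ (respectively $\langle y\rangle$); hence $\{x,y\}$ is an edge of $EPow(G)$. Likewise, if $x,y\in\langle z\rangle$ for some $z\in G$, then $x$ and $y$ commute, so $\{x,y\}$ is an edge of $Com(G)$. Since all three graphs have the same vertex set $G$, the graphs $EPow(G)$ and $Com(G)$ are spanning supergraphs of $Pow(G)$.

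Next I would establish the monotonicity $OD(Com(G)) \leq OD(EPow(G)) \leq OD(Pow(G))$. The underlying lemma is: if $X$ is a spanning subgraph of $Y$, then $OD(Y)\leq OD(X)$. To prove it, take an optimal orientation $\OO_X$ of $X$ with $diam(X_{\OO_X})=OD(X)$, and extend it to an orientation $\OO_Y$ of $Y$ by orienting each edge of $E(Y)\setminus E(X)$ arbitrarily. Every directed path of $X_{\OO_X}$ persists as a directed path of $Y_{\OO_Y}$, because the edges of $X$ retain their orientation. Consequently $d_{Y_{\OO_Y}}(u,v)\leq d_{X_{\OO_X}}(u,v)$ for every pair $u,v$, whence $diam(Y_{\OO_Y})\leq diam(X_{\OO_X})=OD(X)$, and therefore $OD(Y)\leq OD(X)$. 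Applying this twice to the containments above yields the desired inequality chain.

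Finally, since $G$ has no maximal cyclic subgroup of order $2$, \Cref{4 upper bound} gives $OD(Pow(G))\leq 4$, and the monotonicity chain then forces $OD(EPow(G))\leq 4$ and $OD(Com(G))\leq 4$, as required. I do not expect a genuine obstacle here; the single point deserving a moment's care is the monotonicity lemma, where one must resist the intuition that additional edges could lengthen distances. They cannot: orienting the new edges arbitrarily can only introduce potential shortcuts while preserving all pre-existing directed paths, so distances only shrink and the oriented diameter is nonincreasing under edge addition.
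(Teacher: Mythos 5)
Your proposal is correct and follows essentially the same route as the paper: the edge containments $E(Pow(G)) \subseteq E(EPow(G)) \subseteq E(Com(G))$, the monotonicity $OD(Com(G)) \leq OD(EPow(G)) \leq OD(Pow(G))$, and an appeal to \Cref{4 upper bound}. The only difference is that you spell out the proof of the monotonicity lemma (extend an optimal orientation arbitrarily to the extra edges), which the paper takes as immediate.
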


Moreover, $EPow(G)$ is a complete graph if and only if $G$ is cyclic, and $Com(G)$ is a complete graph if and only if $G$ is abelian. Hence, it makes sense to study the oriented diameter of enhanced power graphs of non-cyclic finite groups and commuting graphs of non-abelian finite groups. Now, from our previous discussion, it is clear that \Cref{od of all p-groups} and \Cref{nil: main result} yield upper bounds for the oriented diameter of corresponding enhanced power graphs and commuting graphs. But since there are more edges in $EPow(G)$ and $Com(G)$ than $Pow(G)$, there is a possibility that the actual value of the oriented diameter is less than these upper bounds. Hence, this leads to the following two natural questions.

\textbf{Question 1:} Can we characterize the oriented diameter of enhanced power graphs of non-cyclic finite nilpotent groups?

\textbf{Question 2:} Can we characterize the oriented diameter of commuting graphs of non-abelian finite nilpotent groups?

\bibliography{references}
\bibliographystyle{alpha}


\appendix
\section{Extended Preliminary}\label{ext preli}
Let $X=(V,E)$ be a graph. If $S \subseteq V(X)$, then the subgraph with the vertex set $S$, and edges in $E(X)$ with both endpoints in $S$, is called the \textit{induced subgraph} of\ $X$ on $S$, and it is denoted by $X[S]$. In an undirected graph $X$, a vertex $u$ is said to be a \textit{neighbour} of a vertex $v$ (and vice versa) if $\{u,v\} \in E(X)$. 

In an undirected graph $X$ a \textit{path} between $u_1$ and $u_k$ is a sequence $u_1u_2\dots u_k$ of distinct vertices from $V(X)$ such that $\{u_{i},u_{i+1}\}\in E(X)$ for each $1\leq i \leq (k-1)$. The \textit{length} of a path is the number of edges participating in it, i.e., the length of the path $u_1u_2\dots u_k$ is $(k-1)$. A \textit{directed path} in a directed graph $\XX$ is defined analogously with the condition $(u_{i},u_{i+1})\in E(\XX)$ for each $1\leq i \leq (k-1)$.

We now state some useful group-theoretic facts. 

\begin{fact}
\label{gtf_commute}
    Let $x$ and $y$ be two non-trivial elements of a group $G$ such that $xy=yx$. Then $(xy)^n=x^ny^n$.
\end{fact}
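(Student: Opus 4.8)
The plan is to prove the identity $(xy)^n = x^n y^n$ by induction on $n \geq 1$, the only ingredient beyond the definition of powers being the commutativity hypothesis $xy = yx$. The hypothesis that $x$ and $y$ are non-trivial plays no role in the argument; it merely reflects the context in which the fact is applied, so I would not invoke it.

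First I would record the auxiliary observation that if $xy = yx$, then $x$ commutes with every power of $y$, that is, $x y^k = y^k x$ for all $k \geq 1$. This is itself a one-line induction on $k$: the base case $k = 1$ is the hypothesis, and for the inductive step one writes
\[
x y^{k+1} = (x y^k)\, y = (y^k x)\, y = y^k (xy) = y^k (yx) = y^{k+1} x,
\]
using the inductive hypothesis in the second equality and the commutativity hypothesis in the fourth.

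With this in hand, the main induction is immediate. The base case $n = 1$ reads $(xy)^1 = xy = x^1 y^1$. For the inductive step, assuming $(xy)^k = x^k y^k$, I would compute
\[
(xy)^{k+1} = (xy)^k (xy) = x^k y^k x y = x^k (y^k x)\, y = x^k (x y^k)\, y = x^{k+1} y^{k+1},
\]
where the second equality is the inductive hypothesis, the fourth applies the auxiliary observation $y^k x = x y^k$, and the last collects the powers $x^k x = x^{k+1}$ and $y^k y = y^{k+1}$. This closes the induction and establishes $(xy)^n = x^n y^n$ for all $n \geq 1$.

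There is essentially no obstacle here: the statement is a standard elementary fact, and its entire content reduces to the single commutation step $y^k x = x y^k$, which the auxiliary induction supplies. If one wanted the identity for all integers $n$, the case $n = 0$ is trivial and the negative-exponent case follows by taking inverses, since $(xy)^{-1} = y^{-1} x^{-1} = x^{-1} y^{-1}$ (inverses of commuting elements commute); but for the positive-exponent version needed in the paper, the induction above is sufficient.
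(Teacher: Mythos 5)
Your proof is correct. The paper states this as a well-known group-theoretic fact without supplying any proof, so there is nothing to compare against; your two-stage induction (first $xy^k = y^k x$, then $(xy)^n = x^n y^n$) is the standard and complete argument, and you are right that the non-triviality hypothesis is irrelevant to the identity itself.
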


\begin{fact}\label{co-prime orders commute}
    Let $G$ be a finite group and $x,y \in G \setminus \{e\}$ be two elements such that $o(x)$ and $o(y)$ are co-prime to each other and $xy=yx$. Then, $\langle xy \rangle$ forms a cyclic subgroup of $G$ of order $o(x)\cdot o(y)$. In particular, $o(xy)=o(x)\cdot o(y)$.
\end{fact}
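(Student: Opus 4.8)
The plan is to write $a=o(x)$ and $b=o(y)$, where by hypothesis $\gcd(a,b)=1$, and to prove that $o(xy)=ab$; once the order is pinned down, $\langle xy\rangle$ is automatically a cyclic subgroup of $G$ of the claimed order $o(x)\cdot o(y)$. The commutativity $xy=yx$ is precisely the feature that makes powers of the product tractable: by \Cref{gtf_commute} we have $(xy)^n=x^n y^n$ for every integer $n$, and I would invoke this identity throughout.

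First I would establish the easy divisibility $o(xy)\mid ab$. This is immediate, since $(xy)^{ab}=x^{ab}y^{ab}=(x^a)^b(y^b)^a=e$, so the order of $xy$ divides $ab$.

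The reverse divisibility $ab\mid o(xy)$ is the heart of the argument, and it rests on the observation that $\langle x\rangle\cap\langle y\rangle=\{e\}$. This follows from coprimality: any element lying in the intersection has order dividing both $a$ and $b$, hence dividing $\gcd(a,b)=1$, so it is the identity. Now suppose $(xy)^k=e$ for some positive integer $k$. Then $x^k y^k=e$, so $x^k=y^{-k}$; the left-hand side lies in $\langle x\rangle$ and the right-hand side in $\langle y\rangle$, so by the trivial-intersection fact both equal $e$. Hence $a\mid k$ and $b\mid k$, and since $\gcd(a,b)=1$ this forces $ab\mid k$. Taking $k=o(xy)$ gives $ab\mid o(xy)$, which combined with the previous paragraph yields $o(xy)=ab=o(x)\cdot o(y)$.

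The only genuinely delicate point is the trivial-intersection step, and I expect no real obstacle even there, since the coprimality hypothesis is exactly what guarantees that the two cyclic subgroups meet only in the identity. Everything else is a routine manipulation of exponents made available by the commutativity of $x$ and $y$ through \Cref{gtf_commute}.
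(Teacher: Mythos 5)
Your proof is correct. The paper states this as a standard group-theoretic fact without providing a proof (it appears in the extended preliminaries, immediately after Fact~\ref{gtf_commute}, which is evidently supplied for exactly this purpose), and your argument --- establishing $o(xy)\mid o(x)o(y)$ via $(xy)^{ab}=x^{ab}y^{ab}=e$, and the reverse divisibility via the trivial intersection $\langle x\rangle\cap\langle y\rangle=\{e\}$ forced by coprimality --- is the standard one the authors implicitly rely on.
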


The next property about finite nilpotent groups can be proved from the definition of finite nilpotent groups (see \Cref{preli}). 

\begin{fact}\label{nilpotent commute}\cite{dummit2004abstract}
    A finite group is nilpotent if and only if two elements with relatively prime orders commute with each other.
\end{fact}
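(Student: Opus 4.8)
The plan is to prove both implications directly from the definition adopted in \Cref{preli}, namely that a finite group is nilpotent precisely when it is the internal direct product of its Sylow subgroups. Write $\pi(G) = \{p_1, \dots, p_k\}$ and let $P_i$ be a Sylow $p_i$-subgroup of $G$, so that $|P_i| = p_i^{a_i}$, where $p_i^{a_i}$ is the exact power of $p_i$ dividing $|G|$. Throughout, the key elementary observation is that every element of $P_i$ has order a power of $p_i$, so elements drawn from distinct factors $P_i$ and $P_j$ automatically have coprime orders.

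For the forward direction, I would assume $G$ is nilpotent, so $G = P_1 \times \cdots \times P_k$, and each $g \in G$ has a unique coordinate expression $g = (g_1, \dots, g_k)$ with $g_i \in P_i$. Take $x, y \in G \setminus \{e\}$ with $\gcd(o(x), o(y)) = 1$ and write $x = (x_1, \dots, x_k)$, $y = (y_1, \dots, y_k)$. Since $o(x) = \prod_i o(x_i)$ and $o(y) = \prod_i o(y_i)$, the factors being powers of distinct primes, coprimality of $o(x)$ and $o(y)$ forces, for each index $i$, at least one of $x_i, y_i$ to equal the identity of $P_i$. Hence $x_i y_i = y_i x_i$ coordinatewise for every $i$, and multiplying coordinatewise gives $xy = yx$.

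For the converse, I would assume any two elements of coprime order commute. Then for $i \neq j$ every element of $P_i$ commutes with every element of $P_j$, since such elements have coprime orders. I would use this to define the product map $\phi : P_1 \times \cdots \times P_k \to G$ by $(x_1, \dots, x_k) \mapsto x_1 x_2 \cdots x_k$; the pairwise elementwise commuting of the $P_i$ makes $\phi$ a group homomorphism. The crucial step is injectivity: if $x_1 \cdots x_k = e$, then $x_1 = (x_2 \cdots x_k)^{-1}$, where the left side has $p_1$-power order, while the right side is a product of pairwise commuting elements of orders coprime to $p_1$ and hence itself has order coprime to $p_1$. An element whose order is simultaneously a power of $p_1$ and coprime to $p_1$ must be $e$, so $x_1 = e$, and by symmetry every $x_i = e$. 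Since the domain has order $\prod_i p_i^{a_i} = |G|$, the injective homomorphism $\phi$ is an isomorphism, exhibiting $G$ as the internal direct product of its Sylow subgroups, i.e. $G$ is nilpotent.

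The main obstacle I anticipate is the converse direction: one must first extract elementwise commutativity of distinct Sylow subgroups purely from the coprime-order hypothesis, and then the injectivity argument hinges on the fact that a product of pairwise commuting elements of pairwise coprime order has order equal to the product of the orders, which follows by iterating \Cref{co-prime orders commute}, so that the ``power of $p_1$ versus coprime to $p_1$'' contradiction goes through. Once these two points are secured, the counting argument forcing $\phi$ to be a bijection is immediate, and it simultaneously shows each $P_i$ is normal, so the choice of Sylow subgroups is irrelevant. The forward direction is essentially bookkeeping in direct-product coordinates and should present no difficulty.
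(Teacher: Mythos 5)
The paper does not actually prove this statement; it is quoted as a textbook fact with a citation to Dummit--Foote, accompanied only by the remark that it "can be proved from the definition of finite nilpotent groups." So there is no in-paper argument to compare against, and your proposal should be judged on its own. It is correct and complete relative to the paper's working definition of nilpotency (a finite group is nilpotent iff it is the direct product of its Sylow subgroups). The forward direction is sound: coprimality of $o(x)$ and $o(y)$, together with $o(x)=\prod_i o(x_i)$ for pairwise-coprime coordinate orders, forces one of $x_i,y_i$ to be trivial in each coordinate, and commutation follows coordinatewise. The converse is the standard recognition argument: elementwise commutation of distinct Sylow subgroups follows immediately from the coprime-order hypothesis, the product map $\phi$ is then a homomorphism (each needed swap involves elements of distinct Sylow subgroups), and injectivity follows because $x_2\cdots x_k$ lies in the abelian group $\langle x_2,\dots,x_k\rangle$ and hence has order dividing $\mathrm{lcm}(o(x_2),\dots,o(x_k))$, which is coprime to $p_1$; the order count then gives bijectivity. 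The one point worth making explicit in a written-up version is exactly the one you flag: the order of $x_2\cdots x_k$ being coprime to $p_1$ requires the pairwise commutation (so that these elements generate an abelian group, or equivalently so that \Cref{co-prime orders commute} can be iterated); with that stated, there is no gap.
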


\section{Appendix}

\subsection{Power graphs violating the conditions given by Czabarka et al. and Cochran et al.}
\label{app_degree_size_condition}

Let $n=p_1p_2p_3p_4\ldots p_r$ be a squarefree number where $p_1<p_2<\ldots<p_r$ are prime numbers and  $p_1=2, ~ p_2=3$ and $p_3=5$.
The degree of an element $x$ of order $2.3\dots p_{r-1}$ in $Pow(\ZZ_n)$ is strictly less than $n/2$ (See \cite[Sec. 1.2]{ panda2021minimum} for the degree expression of an element). Thus, this violates the condition given by Czabarka et al. \cite{czabarka2019degree}. Now, we show that this also violates the condition given by Cochran et al. \cite{cochran2021size}. 
 Note that each of the $\phi(o(x))=(p_1-1).(p_2-2).\ldots (p_{r-1}-1)$ generators of $\langle x \rangle $ has degree less than $n/2$.
Hence, considering the missing edges due to the generators of $\langle x \rangle$, we can show that $(\binom{n}{2}-n+5)-|E(Pow(\ZZ_n))| \geq n. \{\frac{ (p_1-1).(p_2-1).\dots(p_{r-1}-1)}{2}-1\}+5$. Now the term $\frac{ (p_1-1).(p_2-1).\dots(p_{r-1}-1)}{2}$ is at least  $4$ for $r\geq 4$. Therefore, the power graphs of such cyclic groups contain at least $3n$ edges fewer than the threshold size of the edge set provided by Cochran et al. \cite{cochran2021size}. A similar situation arises for $n=3.5.7.11.13.k'$, where $k'$ is a squarefree natural number such that $gcd(3.5.7.11.13,k')=1$.

Moreover, the difference depends on the term $T=\phi(n)/ (p_r-1)$. One can show that $T$ can be huge for sufficiently large $n$. For that, we consider $H_1=\liminf_{\infty} (p_{k+1}-p_k)$, where $p_i$ denotes the $i$-th prime. It is known that $H_1 \leq 246$ \cite{zhang2014bounded, polymath2014variants}. Hence, for infinitely many choices of $n$, the difference between its largest two prime factors $p_r - p_{r-1} \leq (n-1)p_r$, which in turn gives $p_r \leq n^{1/2}$. On the other hand, for every $c >0$ and for sufficiently large $n$, $\phi(n) > c. n ^{1-\delta}$, where $\delta > 0$ (see \cite [Th. 327]{hardy1979introduction}). Taking $\delta=0.001$, we get $T > c.n^{0.499}$.

\subsection{Power graph of $\ZZ_6$}\label{app_cyclic_Z6}
\begin{lemma}\label{cyclic_Z6}
     The oriented diameter of $Pow(\mathbb{Z}_6)$ is $3$.
\end{lemma}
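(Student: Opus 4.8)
The plan is to prove the two bounds $OD(Pow(\ZZ_6)) \le 3$ and $OD(Pow(\ZZ_6)) \ge 3$ separately. The upper bound is immediate from \Cref{Upper3} applied with $n=6$, and finiteness of the oriented diameter follows from \Cref{power graph 2-edge connected}, since the unique subgroup of order $2$ in $\ZZ_6$ is contained in $\ZZ_6=\langle 1\rangle$ and hence is not maximal. The real work is the lower bound, for which I would first record the structure of the graph. Writing $\ZZ_6=\{0,1,2,3,4,5\}$, the orders are $o(0)=1$, $o(1)=o(5)=6$, $o(2)=o(4)=3$, and $o(3)=2$. By \Cref{dom of a power graph} the vertices $\{0,1,5\}$ are dominating, the two order-$3$ elements $2,4$ are mutually adjacent, and by \Cref{cyclic remark} the order-$2$ element $3$ is adjacent to $\{0,1,5\}$ only. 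Thus $Pow(\ZZ_6)$ consists of a $K_5$ on $\{0,1,2,4,5\}$ together with the extra vertex $3$ joined precisely to the three dominating vertices $\{0,1,5\}$.

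For the lower bound I would argue that no orientation $\OO$ of $Pow(\ZZ_6)$ has diameter $2$. Assuming such an $\OO$ exists, I focus on the degree-$3$ vertex $3$ and partition its neighborhood $\{0,1,5\}$ into the set $I$ of in-neighbors and the set $O$ of out-neighbors. Both sets are nonempty, since otherwise $3$ has in-degree or out-degree $0$ and cannot participate in a strong orientation. As $|I|+|O|=3$, one of the two sets is a singleton; using the reversal symmetry of \Cref{symmetric choice of orientation} (which preserves the diameter), it suffices to treat the case $O=\{w\}$ for a single vertex $w\in\{0,1,5\}$.

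In this case the only out-edge of $3$ is $3\to w$, so every directed path of length at most $2$ starting at $3$ must begin with this edge. Since $3$ must reach both order-$3$ elements $2,4$ and both of its own in-neighbors within distance $2$, the vertex $w$ is forced to dominate all four remaining vertices of the $K_5$; consequently the only in-neighbor of $w$ in the whole graph is $3$. The contradiction now comes from the non-neighbors of $3$: the element $2$ must reach $w$ within distance $2$, but the only vertex pointing into $w$ is $3$, and $\{2,3\}$ is not an edge of $Pow(\ZZ_6)$, so the directed distance from $2$ to $w$ is at least $3$. This contradicts the diameter being $2$, and the symmetric case $|I|=1$ follows by reversing all edges via \Cref{symmetric choice of orientation}.

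The step I expect to be the crux is this bottleneck analysis: once the neighborhood of $3$ is split into a singleton and a pair, one neighbor is compelled to be a universal source (or, in the reversed case, a universal sink) of the $K_5$, and the two vertices $2,4$ that are \emph{not} adjacent to $3$ supply the obstruction. Establishing the upper bound and the structural description is routine; the care lies in the short case analysis on the partition of the neighborhood of $3$ and in invoking \Cref{symmetric choice of orientation} to reduce to a single case.
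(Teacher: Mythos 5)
Your proposal is correct and follows essentially the same route as the paper's proof: both isolate the degree-$3$ vertex $3$, note that its neighborhood $\{0,1,5\}$ must split into a singleton and a pair of in/out-neighbors, reduce to the single-out-neighbor case by edge reversal, force that out-neighbor $w$ to dominate the remaining $K_5$, and then obtain the contradiction from the non-neighbor $2$ being unable to reach $w$ in two steps. The only cosmetic difference is that you keep $w$ generic where the paper fixes $w=5$ without loss of generality.
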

\begin{figure}[hpt!]

\centering
\begin{subfigure}[b]{0.45\textwidth}
\centering
\begin{tikzpicture}[scale=0.35]

                
\coordinate (0) at (0,0);
\coordinate (3) at (0,6);
\coordinate (1) at (2,2);
\coordinate (2) at (2,4);
\coordinate (5) at (-2,2);
\coordinate (4) at (-2,4);

\filldraw [black] (0) circle(2pt);
\filldraw [black] (1) circle(2pt);
\filldraw [black] (2) circle(2pt);
\filldraw [black] (3) circle(2pt);
\filldraw [black] (4) circle(2pt);
\filldraw [black] (5) circle(2pt);

\draw (0) node[right][scale=1] {\texttt{0}};
\draw (1) node[right][scale=1] {\texttt{1}};
\draw (2) node[right][scale=1] {\texttt{2}};
\draw (3) node[right][scale=1] {\texttt{3}};
\draw (4) node[left][scale=1] {\texttt{4}};
\draw (5) node[left][scale=1] {\texttt{5}};

\draw (0) -- (1) ;
\draw (0) -- (2) ;
\draw (0) -- (3) ;
\draw (0) -- (4) ;
\draw (0) -- (5) ;
\draw (1) -- (2) ;
\draw (1) -- (3) ;
\draw (1) -- (4) ;
\draw (1) -- (5) ;
\draw (2) -- (4) ;
\draw (2) -- (5) ;
\draw (4) -- (5) ;
\draw (5) -- (3) ;



                \end{tikzpicture}

\caption{Power graph of $\mathbb{Z}_6$.}
\label{Fig_Z6_a}
\end{subfigure}
\begin{subfigure}[b]{0.45\textwidth}
\centering
\begin{tikzpicture}[scale=0.35]

                
\coordinate (0) at (0,0);
\coordinate (3) at (0,6);
\coordinate (1) at (2,2);
\coordinate (2) at (2,4);
\coordinate (5) at (-2,2);
\coordinate (4) at (-2,4);

\filldraw [black] (0) circle(2pt);
\filldraw [black] (1) circle(2pt);
\filldraw [black] (2) circle(2pt);
\filldraw [black] (3) circle(2pt);
\filldraw [black] (4) circle(2pt);
\filldraw [black] (5) circle(2pt);

\draw (0) node[right][scale=1] {\texttt{0}};
\draw (1) node[right][scale=1] {\texttt{1}};
\draw (2) node[right][scale=1] {\texttt{2}};
\draw (3) node[right][scale=1] {\texttt{3}};
\draw (4) node[left][scale=1] {\texttt{4}};
\draw (5) node[left][scale=1] {\texttt{5}};
\path [draw,->][blue] (-0.75,5) -- (-1.1,4.25);
\path [draw,->][blue] (-0.25,4.25) -- (-0.25,5);
\path [draw,->][blue] (0.65,4.25) -- (0.25,5);
\path [draw,->][blue] (-2.25,2.5) -- (-2.25,3.25);
\path [draw,->][blue] (0.1,3.25) -- (0.85,3.7);
\path [draw,->][blue] (0.1,2.20) -- (0.85,2.20);
\path [draw,->][blue] (-2,1.60) -- (-1.3,1);

\draw (0) -- (1) ;
\draw (0) -- (2) ;
\draw (0) -- (3) ;
\draw (0) -- (4) ;
\draw (0) -- (5) ;
\draw (1) -- (2) ;
\draw (1) -- (3) ;
\draw (1) -- (4) ;
\draw (1) -- (5) ;
\draw (2) -- (4) ;
\draw (2) -- (5) ;
\draw (4) -- (5) ;
\draw (5) -- (3) ;



                \end{tikzpicture}

\caption{The directions discussed in the proof.}
\label{Fig_Z6_b}
\end{subfigure}

\caption{Illustration of proof of \Cref{cyclic_Z6}}
\label{Fig_Z6}
\end{figure}
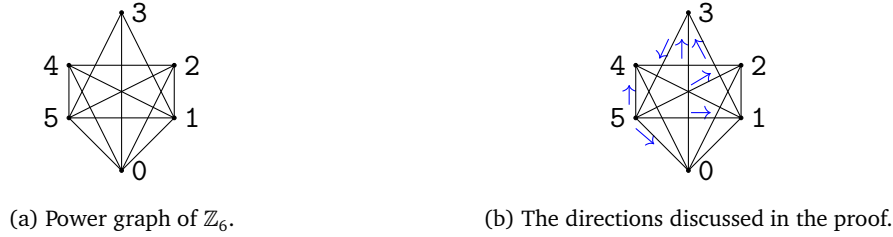
\begin{proof} 
      Due to  \Cref{Upper3}, it is sufficient to prove that $Pow(\mathbb{Z}_6)$ (\Cref{Fig_Z6_a}) cannot have oriented diameter $2$. The cyclic group $\ZZ_6$ has group elements \texttt{0, 1, 2, 3, 4, 5}. In any strong orientation, all the edges incident to a vertex $v$ cannot be directed outwards from $v$ or inwards to $v$. Now in the case when the vertex \texttt{3} in $Pow(\mathbb{Z}_6)$ has only one outward edge, we can assume without loss of generality that the directions given to the edges incident to \texttt{3} in the graph $Pow(\ZZ_6)$ are \texttt{(3,5), (0,3)} and \texttt{(1,3)}. Then, to have a directed path of length $2$ from \texttt{3} to the vertices  \texttt{ 4, 2, 1, 0}, we need the following directed edges: \texttt{(5,4),(5,2),(5,1),(5,0)} respectively. In that case, we can not have a directed path of length $2$ from \texttt{2} to \texttt{5} (see \Cref{Fig_Z6_b}), and hence, $Pow(\ZZ_6)$ cannot have an orientation with diameter $3$. The case when vertex \texttt{3} in $Pow(\mathbb{Z}_6)$ has only one inward edge is similar. 
\end{proof}

\subsection{Proof of \Cref{direct product of co-prime order}}\label{app_direct product of co-prime order}

\vspace{0.2cm}

    Since $g_1$ generates $g_2$ in $G$, there exists natural number $k_1 \geq 1$ such that $g_1^{k_1}=g_2$. Moreover, $g_1^{k_1+m_1\cdot o(g_1)}=g_2$, where $m_1$ is an integer. Since $h_1$ generates $h_2$ in $H$, we can similarly write that $h_1^{k_2+m_2\cdot o(h_1)}=h_2$, where $k_2\geq 1$ is a natural number and $m_2$ is an integer. The element $(g_1,h_1)$ generates $(g_2,h_2)$ if and only if there is an integer $x$ such that $(g_1,h_1)^x=(g_2,h_2)$, i.e., $g_1^X=g_2$ and $h_1^x=h_2$. Such $x$ exists if the congruence equations $x\equiv k_1$ (mod $o(g_1)$) and $x\equiv k_2$ (mod $o(h_1)$) have a solution. Now, $gcd(|G|,|H|)=1$ implies that $o(g_1)$ and $o(h_1)$ are co-prime to each other. So, by the Chinese Remainder Theorem (see \cite{rosen2011elementary}), the above equations have a solution, say $l$, and we can write $(g_1,h_1)^{l}=(g_2,h_2)$. Hence, $(g_1,h_1)$ generates $(g_2,h_2)$ in $G\times H$.

\subsection{An observation for the proof of \Cref{nil: OD strictly greater than 3} and \Cref{nil: one 2-order}}\label{nn}

\vspace{0.2cm}

\begin{observation}\label{obs mcs}
    Let $G$ be a non-cyclic nilpotent group and $|G|=2^mp^n$, where $p$ is an odd prime and $m,n \geq 1$. If $C$ is a maximal cyclic subgroup of $G$ of order $2p^{\beta}$, $1\leq \be \leq n$, containing a base element $x$ of order $2$, then any maximal cyclic subgroup of $G$ containing $x$ is of order $2p^{\gamma}$, for some $\gamma \geq 1$.
\end{observation}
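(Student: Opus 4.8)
The plan is to exploit the direct product decomposition of the nilpotent group into its Sylow subgroups. Since $G$ is nilpotent with $|G| = 2^m p^n$, we may write $G = S_2 \times S_p$, where $S_2$ and $S_p$ are the unique Sylow $2$- and $p$-subgroups of orders $2^m$ and $p^n$ respectively. Every element $g \in G$ then has a unique form $g = (a,b)$ with $a \in S_2$ and $b \in S_p$; because $o(a)$ is a power of $2$ and $o(b)$ is a power of the odd prime $p$, \Cref{nilpotent} gives $o(g) = o(a)\,o(b)$ and $\langle g \rangle = \langle a \rangle \times \langle b \rangle$. This component language is the main tool for the whole argument.

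First I would show that the hypothesis on $C$ forces $\langle x \rangle$ to be a \emph{maximal} cyclic subgroup of $S_2$. Writing a generator of $C$ as $(a,b)$ with $o(a)\,o(b) = 2p^{\beta}$ forces $o(a) = 2$ and $o(b) = p^{\beta}$, so $C = \langle a \rangle \times \langle b \rangle$. As $\langle b \rangle$ has odd order, the only element of order $2$ in $C$ is $(a,e)$, whence $\langle x \rangle = \langle a \rangle$. If $\langle a \rangle$ were properly contained in some cyclic subgroup $\langle a' \rangle \leq S_2$, then $\langle (a',b) \rangle = \langle a' \rangle \times \langle b \rangle$ would be a cyclic subgroup of $G$ (the orders $o(a')$ and $p^{\beta}$ are coprime) properly containing $C$, contradicting the maximality of $C$. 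Hence $\langle x \rangle$ is maximal cyclic in $S_2$.

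Next, let $M$ be an arbitrary maximal cyclic subgroup of $G$ containing $x$. Since $G$ is non-cyclic, \Cref{nilpotent} guarantees that both $2$ and $p$ divide $|M|$, so $M = \langle c \rangle \times \langle d \rangle$ with $o(c) = 2^{m'}$, $o(d) = p^{n'}$, and $m', n' \geq 1$. Writing $x = (x_2, e)$ with $o(x_2) = 2$, membership $x \in M$ yields $x_2 \in \langle c \rangle$, i.e. $\langle x \rangle \subseteq \langle c \rangle$. But $\langle x \rangle$ is maximal cyclic in $S_2$ and $\langle c \rangle$ is a cyclic subgroup of $S_2$ containing it, so $\langle x \rangle = \langle c \rangle$ and therefore $o(c) = 2$. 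Consequently $|M| = 2\,p^{n'}$, which is exactly of the form $2p^{\gamma}$ with $\gamma = n' \geq 1$, as required.

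I expect the only delicate point to be the bookkeeping in the component decomposition, specifically verifying that maximality of a cyclic subgroup inside $G$ transfers to maximality of its $2$-part inside $S_2$; once that transfer is established, the conclusion follows from a short containment argument using \Cref{gtf_CLT} and the coprimality of the two Sylow orders.
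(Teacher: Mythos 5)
Your proof is correct, but it is organized differently from the paper's. The paper argues by contradiction with explicit elements: it picks $y\in C$ of order $p^{\beta}$, supposes a maximal cyclic $C'\ni x$ has order $2^{\alpha}p^{\gamma}$ with $\alpha>1$, extracts $w\in C'$ of order $4$ (which must generate $x$ by uniqueness of the involution in $C'$), and then shows via \Cref{nilpotent commute} and \Cref{gtf_commute} that $\langle wy\rangle$ is a cyclic subgroup of order $4p^{\beta}$ properly containing $C=\langle xy\rangle$, contradicting maximality of $C$. You instead pass to the Sylow decomposition $G=S_2\times S_p$, factor every cyclic subgroup as $\langle a\rangle\times\langle b\rangle$, and isolate the clean intermediate statement that $\langle x\rangle$ is a \emph{maximal} cyclic subgroup of $S_2$; the conclusion for an arbitrary $M\ni x$ then drops out by comparing $2$-components. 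The underlying mechanism is the same in both proofs --- maximality of $C$ is violated by the cyclic subgroup $\langle a'\rangle\times\langle b\rangle$ (your notation), which is exactly the paper's $\langle wy\rangle$ --- but your packaging avoids the paper's element-level bookkeeping (the congruence argument showing $wy$ generates both $w$ and $y$ is replaced by the coprime-order direct-product fact, i.e.\ \Cref{direct product of co-prime order}), and it localizes the maximality transfer in a reusable lemma about $S_2$. The one point worth stating explicitly in a final write-up is the identification $\langle (a,b)\rangle=\langle a\rangle\times\langle b\rangle$, which follows from \Cref{nilpotent} plus a cardinality count; with that spelled out, every step checks.
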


\begin{proof}
    Let $y \in C$ be an element of order $p^{\beta}$. Note that $xy$ generates $C$. For the sake of contradiction, we assume that a maximal cyclic subgroup $C'$ of $G$ containing $x$ has order $2^{\al}p^{\gamma}$ where $\al >1, \gamma \geq 1$. Hence, $C'$ must have an element $w$ of order $2^2$ (by \Cref{gtf_CLT}), and $w$ must generate $x$ (because $w$ generates an element of order $2$ of $C'$, and $x$ is the only element of order $2$ in $C'$). Now, using \Cref{nilpotent commute}, $w$ and $y$ commute with each other. Hence, using \Cref{gtf_commute} and a well-known number theoretic fact\footnote{The congruence equation $az\equiv b \ (mod \ n)$ has a solution for $z$ if and only if $gcd(a,n)$ divides $b$.}, $wy$ generates both $y$ and $w$. This implies that $wy$ generates both $x$ (since $w$ generates $x$) and $y$. Therefore, $wy$ generates $xy$. So, $wy$ generates $C$. This contradicts that $C$ is a maximal cyclic subgroup of $G$.
    \end{proof}

\subsection{Examples of nilpotent groups corresponding to \Cref{nil: main result}}
\label{nil: example}

\vspace{0.2cm}

 For each of the conditions of \Cref{nil: main result}, we provide examples of finite non-cyclic nilpotent groups that are not in $\Gprime$.
\begin{itemize}
    \item  Only condition (a): $G=G_1 \times G_2 $ where $G_1$ is a $p$-group and $G_2$ is a $q$-group, where $p$ and $q$ are odd primes.
    \item  Only condition (b): $G= \mathbb{Z}_{4p^n} \times \mathbb{Z}_{4q^m} $ where $p$ and $q$ are odd primes, and $m,n \geq 1$.
    \item Only condition (c): $G= G_1 \times \mathbb{Z}_{p^n} $ where $G_1$ is a $2$-group, $p$ is an odd prime and $n \geq 1$.
    \item  Only condition (d): $G= Q_8 \times G_1 $ where $G_1$ is a $p$-group, and $p$ is an odd prime.
    \item  None of (a)-(d): $G= \mathbb{Z}_{2p} \times \mathbb{Z}_{2p} $ where $p$ is an odd prime.
\end{itemize}

\subsection{Proof of \Cref{nil: no mcs of order 2pk} }
\label{app_nil_no mcs of order 2pk}

\vspace{0.2cm}

 Due to \Cref{nil: od not 2} and \Cref{obs od is at most diam of partial orient}, it is sufficient to give an orientation of $Pow(G)$ with diameter $3$. Along with the partial orientations $\OO_1,\OO_2,\OO_3$ given in \Cref{Remark_2}, we also use the following partial orientations:
    \begin{itemize}
        
        \item[$\OO_4$ :] From any base element of order $2$, we orient the edges towards all the adjacent non-base elements of order $2p^{\beta}$ (where $\beta \geq 1$).
        
        \item [$\OO_5$ :] Consider a non-base class $N$ of order $2^\al p^\be$, $\alpha \geq 2, \beta \geq 1$. By \Cref{gtf_CLT}, $N$ is adjacent to only one base class $M$ of order $2^2$. Then, we orient the edges of $E(M,N)$ as described in $\OO_2$ of \Cref{Remark_2}. The choices of anchor gadget points of $N$ for $M$ depend on $M$ as stated in \Cref{Remark_2}. In other words, while introducing $\OO_5$ in a non-base class $N$ of order $2^{\al}p^{\be}$, $\al \geq 2, \be \geq 1$, we select a pair of gadget anchor points $\{n_1,n_2\}$ in $N$ for the base class of order $2^2$ adjacent to $N$ in such a way that neither $n_1$ nor $n_2$ has used as gadget anchor point in $N$ while introducing $\OO_2$ for the base class of order $p$ adjacent to $N$. This is possible since only one base class of odd prime order is adjacent to $N$ and also $|N|=\phi(2^{\al}p^{\be})>2^2 = 2\cdot2$.

        \item[$\OO_6$ :] From any base element of order $2^{\al}, \al \geq 3$, we orient the edges towards all the adjacent non-base elements. Note that these non-base elements are of order $2^{\delta}p^{\beta}$, where $\delta \geq \al$ and $\beta \geq 1$.    

        \item[$\OO_7$ :] From any non-base element of order $2^2p^{\beta}$, $\be \geq 1$, we orient the edge towards the (unique) adjacent element of order $2$. 
    \end{itemize}
 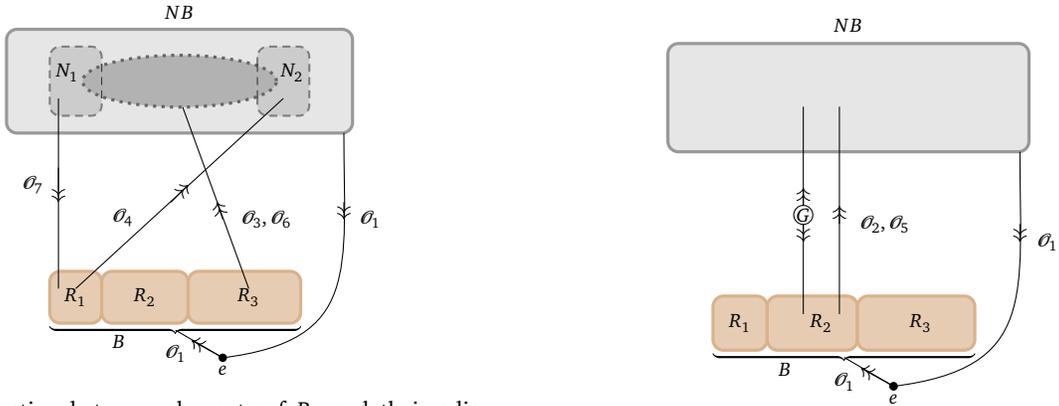
\begin{figure}[hpt!]
\begin{subfigure}[b]{0.47\textwidth}
\centering
        \begin{tikzpicture}[scale=0.23]
        \coordinate (a) at (2,7);
        \coordinate (b) at (3,11);
        \coordinate (c) at (4,7);
        \coordinate (d) at (4.1,11);
        \coordinate (e) at (10,7);
        \coordinate (f) at (10,11);
        \coordinate (g) at (12,7);
        \coordinate (h) at (11.1,11);

        \filldraw[color=black!40, fill=black!10, very thick][rounded corners] (2.5,20) rectangle (22.5,14);

         \filldraw[color=brown!60, fill=brown!40, very thick][rounded corners] (5,6) rectangle (8,3);
        \filldraw[color=brown!60, fill=brown!40, very thick][rounded corners] (8,6) rectangle (13,3);
        \filldraw[color=brown!60, fill=brown!40, very thick][rounded corners] (13,6) rectangle (19.5,3);

        \filldraw[color=black!50, fill=black!20, thick, densely dashed][rounded corners] (5,19) rectangle (8,15);
        \filldraw[color=black!50, fill=black!20, thick, densely dashed][rounded corners] (17,19) rectangle (20,15);

        \path [draw] (15,1) -- (12.26,2.6);
        \path [draw] (5.5,5) -- (5.5,16);
        \path [draw] (6.5,5) -- (18.5,16);
        \path [draw] (16.5,5) -- (12.5,16);

        \path [draw] (15,1) .. controls (23,2) and (22,6) ..  (22,14);
        \path [draw,<<-][] (22,9) -- (22,10);
        
         \path [draw,->>][] (5.5,12) -- (5.5,10);
        \path [draw,->>][] (12,10) -- (13,11);
        \path [draw,->>][] (15,9) -- (14.67,10);

   
                   \filldraw [black] (15,1) circle(6pt);

         \filldraw[color=black!60, fill=black!30, very thick, dotted] (12.5,17) ellipse (5.6cm and 1.5cm);
         
          \draw [color=black!80, densely dashed] (8,18) -- (8,16); 
          \draw [color=black!80, densely dashed] (17,18) -- (17,16);

  \draw [decorate,
    decoration = {calligraphic brace}][thick] (19.5,2.8) --  (5,2.8);

        \draw (12.5,21) node[][scale=0.7] {$NB$};
        \draw (9,2.7) node[below][scale=0.7]{$B$};
        \draw (6.5,4.5) node[][scale=0.7]{$R_1$};
        \draw (10.5,4.5) node[][scale=0.7]{$R_2$};
        \draw (16.5,4.5) node[][scale=0.7]{$R_3$};
        \draw (6,17.5) node[][scale=0.7]{$N_1$};
        \draw (19,17.5) node[][scale=0.7]{$N_2$};
        \draw (15,0.3) node[][scale=0.7]{$e$};
        \draw (4,11) node[][scale=0.7]{$\OO_7$};
        \draw (9.2,9.1) node[][scale=0.7]{$\OO_4$};
        \draw (17.5,9) node[][scale=0.7]{$\OO_3, \OO_6$};
        \draw (23.5,9) node[][scale=0.7]{$\OO_1$};
        
        \draw (12.25,1.25) node[][scale=0.7]{$\OO_1$};
         \path [draw,->>][] (15,1) -- (13.2,2);
              
         \end{tikzpicture}
        \caption{Connection between elements of $R_1$ and their adjacent elements in $N_1$ and $N_2$, where  $N_1=\{v \in NB | o(v)=2^2p^{\be},\ \be \geq 1 \}$ and $N_2=\{v \in NB | o(v)=2p^{\be},\ \be \geq 1\}$.\\ Connection between elements of $R_3$ and their adjacent elements in $NB$. }
         \label{fig:lemma 23_a}
\end{subfigure} \hspace{1 cm}
\begin{subfigure}[b]{0.47\textwidth}
\centering
 \begin{tikzpicture}[scale=0.24]

        \filldraw[color=black!40, fill=black!10, very thick][rounded corners] (2.5,20) rectangle (22.5,14);

        \filldraw[color=brown!60, fill=brown!40, very thick][rounded corners] (5,6) rectangle (8,3);
        \filldraw[color=brown!60, fill=brown!40, very thick][rounded corners] (8,6) rectangle (13,3);
        \filldraw[color=brown!60, fill=brown!40, very thick][rounded corners] (13,6) rectangle (19.5,3);


        \path [draw] (15,1) -- (12.26,2.6);
%
%
%
%

\path [draw] (10,5) -- (10,16.5);
            \path [draw] (12,5) -- (12,16.5); 
        \path [draw,->>][] (12,10) -- (12,11);
                \path [draw] (15,1) .. controls (23,2) and (22,6) ..  (22,14);
        \path [draw,<<-][] (22,9) -- (22,10);
         \filldraw [white] (10,10.5) circle(15pt);
         \draw [] (10,10.5) circle(15pt);  
         \draw (10,10.5) node[][scale=0.7] {$G$};
         \path [draw,->>][] (10,11) -- (10,12);
        \path [draw,->>][] (10,10) -- (10,9);
                  \filldraw [black] (15,1) circle(6pt);

  \draw [decorate,
    decoration = {calligraphic brace}][thick] (19.5,2.8) --  (5,2.8);

        \draw (12.5,21) node[][scale=0.7] {$NB$};
        \draw (9,2.7) node[below][scale=0.7]{$B$};
        \draw (6.5,4.5) node[][scale=0.7]{$R_1$};
        \draw (11,4.5) node[][scale=0.7]{$R_2$};
        \draw (16.5,4.5) node[][scale=0.7]{$R_3$};
        \draw (15,0.3) node[][scale=0.7]{$e$};
        \draw (14.5,10) node[][scale=0.7]{$\OO_2,\OO_5$};
        \draw (23.5,9) node[][scale=0.7]{$\OO_1$};
         
         \path [draw,->>][] (15,1) -- (13.2,2);
        \draw (12.25,1.25) node[][scale=0.7]{$\OO_1$};

          \end{tikzpicture}
        \caption{Connection between non-base classes and their adjacent base classes belonging to $R_2$ via $\OO_2$ and $\OO_5$.\\
        The bi-directional arrow with an inscribed $G$ represents a connection using a $C_4$-gadget (introduced in $\OO_2$ and $\OO_5$).
        }
         \label{fig:lemma 23_b}
\end{subfigure}
\caption{ The directed  arrow (with double points \rotatebox[origin=c]{90}{$\rangle \rangle$}) from a set $A$ to $B$ represents that there is an oriented edge $(a,b)$ from any $a \in A$ to its any adjacent element $b\in B$.}
\label{fig:app_od3}
\end{figure}

     We show an illustration of the given orientations in \Cref{fig:app_od3}. The set $B$ is partitioned into three subsets as follows: (a) $R_1$: consisting of the elements of order $2$; (b) $R_2$: consisting of the elements of order $2^2$ and $p$; (c) $R_3$: consisting of the elements of order $2^{\al}$, $\al\geq 3$, and $p^{\be}$, $\be \geq 2$.
     
   \vspace{0.2cm}

     \textbf{Path directions:} First, we point out the following observations, which can be argued similarly to \Cref{lem: pathtwo}:
    
    \textit{Note 1:}  
    There is a directed path of length $2$ from any element of order $2^{\alpha}$ to any element of order $p$, using $\OO_4$ (when $\alpha=1)$ or $\OO_5$ (when $\alpha=2)$ or $\OO_6$ (when $\alpha \geq 3)$ along with using $\OO_2$.

    \textit{Note 2:} There is a directed path of length $2$ from any element of order $p^{\be}$, $\be \geq 1$ to any element of order $2^2$ using $\OO_2$ (when $\be=1$) or $\OO_3$ (when $\be \geq 2$) along with using $\OO_5$.

    \textit{Note 3:} There is a directed path of length $2$ from any element of order $p^{\be}$, $\be \geq 1$ to any element of order $2$ using $\OO_2$ (when $\be=1$) or $\OO_3$ (when $\be \geq 2$) along with using $\OO_7$.

Let $\Gamma=Pow(G)$ and $\OO$ denote the disjoint union of $\OO_1,\dots,\OO_6$. Then,  we use the notation $\Gamma_{\OO}$ to denote the directed graph $(V(\Gamma),\OO)$. Moreover, let $d(a,b)$ (we use $d(a,b)$ instead of $d_{\Gamma_{\OO}}(a,b)$ as $\Gamma$ and $\OO$ are fixed in this context) denote the shortest distance from a vertex $a$ to a vertex $b$ in the directed graph $\Gamma_{\OO}$ and $d(a,S)=\min\limits_{s \in S}d(a,s)$ denote the shortest distance from a vertex $a$ to a set $S$ in $\Gamma_{\OO}$.

Although other than the path direction from a base element to a non-base element, the path directions are the same as those discussed in the proof of \Cref{nil: two odd primes}, we discuss them here also for the sake of completeness.
From \Cref{fig:app_od3}, one can see that $d(v,e)=1$ for any non-base element $v$ and $d(e,u)=1$ for any base element $u$. This also implies that $d(v,u) \leq 2$, i.e., there is a directed path of length at most $2$ from any element $v\in NB$ to any element $u\in B$.

We claim that if $u \in B=R_1\cup R_2\cup R_3$ then $d(u,NB)=1$. For this, observe that if $u \in R_1$, then there exists some $v\in NB$ such that $(u,v) \in \OO_4$. Similarly, if $u \in R_2$, then there exists some $v \in NB$ such that $(u,v) \in \OO_2\cup \OO_5$ and if $u \in R_3$, then there exists some $v \in NB$ such that $(u,v) \in \OO_3 \cup \OO_6$. 

Noting $d(u,NB)=1$ for all $u\in B$ and $(v,e)\in \OO_1$ for all $v\in NB$, we have a directed path of length at most $2$ from any element of $B$ to $e$. Combining such a path with $(e,u')\in \OO_1$, where $u'$ is any element in $B$, we get a directed path of length at most $3$ between any two elements of $B$.

 Now, for any non-base element $v\in NB$, there exists at least one element $u \in R_2$ such that $(u,v)\in \OO_2$. Moreover since $(e,u)\in \OO_1$ for all $u\in R_2 \subseteq B$, we have $d(e,v)= 2$ for all $v\in NB$. Now, as $(v',e)\in \OO_1$ for all $v'\in NB$, we get $d(v',v)\leq 1+d(e,v) = 3$, i.e., there is a directed path of length at most $3$ between any two elements in $NB$.


Now, the only case that remains to be discussed is when the source vertex $u$ is from $B$, and the destination vertex $v$ is from $NB$. Since $v$ is a non-base element, $ p \mid o(v)$, and hence there exists an element $a\in \langle v \rangle$ such that $o(a)=p$. So the base class $[a]$ is in $R_2$ and participates in a $C_4$-gadget with the non-base class $[v]$ due to $\OO_2$ (see \Cref{fig:lemma 23}). Now, if $o(u)=2^{\alpha}$, $\alpha \geq 1$, then using Note 1, we have $d(u,a') \leq2$ for all $a' \in [a]$. Further using the $C_4$-gadget between $[a]$ and $[v]$, we have $d(u,v) \leq 3$. Else, consider the case when $o(u)=p^{\beta}$, $\beta \geq 1$. If $2^2 \nmid o(v)$, then Note 2 implies $d(u,b) \leq 2$, where $b$ is the (unique) element of order $2$ in $\ang{v}$. Moreover, since $b \in R_1$, we have $(b,v) \in \OO_4$. This gives us $d(u,v) \leq 3$ in this case. If $2^2 \mid o(v)$, then there exists an element $c\in \ang{v}$ of order $2^2$ and $[c]$ participates in a $C_4$-gadget with $[v]$. Now, using Note 3, we have $d(u,c') \leq 2$ for all $c' \in [c]$. After that, due to the $C_4$-gadget between $[c]$ and $[v]$ we have $d(u,v) \leq 3$.

\subsection{Proof of \Cref{nil: one p-order}}\label{app_nil_one p-order}


\vspace{0.2cm}

 Due to \Cref{nil: od not 2} and \Cref{obs od is at most diam of partial orient}, it is sufficient to give an orientation of $Pow(G)$ with diameter $3$. For that, along with the partial orientations $\OO_1,\OO_2,\OO_3$ discussed in \Cref{Remark_2}, we use the following partial orientation.

\begin{itemize}

        \item[$\OO_4$:] From any base element of order $2^{\al}, \al \geq 1$, we orient the edges towards all the adjacent non-base elements. Note that these non-base elements are of order $2^{\delta}p^{\beta}$, where $\delta \geq \al$ and $\beta \geq 1$.         
    \end{itemize}

    \vspace{0.1cm}
    
    \noindent \textbf{Path directions:} First, we point out the following observations, which can be argued similarly to \Cref{lem: pathtwo}:
    
   \textit{Note 1:} Using $\OO_4$ together with $\OO_2$, there is a directed path of length $2$ from any base element of order $2^{\alpha}$, $\alpha \geq 1$ to any base element of order $p$. 
    
    \textit{Note 2: } Using $\OO_3$ together with $\OO_2$, there is a directed path of length $2$ from any base element of order $p^{\be}$, $\be \geq 2$ to any base element of order $p$.
    
Let $\Gamma=Pow(G)$ and $\OO$ denote the disjoint union of $\OO_1,\dots,\OO_4$. Then, we use the notation $\Gamma_{\OO}$ according to \Cref{def: partial orientation}. Moreover, let $d(a,b)$ (we use $d(a,b)$ instead of $d_{\Gamma_{\OO}}(a,b)$ as $\Gamma$ and $\OO$ are fixed in this context) denote the shortest distance from a vertex $a$ to a vertex $b$ in the directed graph $\Gamma_{\OO}$ and $d(a,S)=\min\limits_{s \in S}d(a,s)$ denote the shortest distance from a vertex $a$ to a set $S$ in $\Gamma_{\OO}$.

Although other than the path direction from a base element to a non-base element, the path directions are the same as those discussed in the proof of \Cref{nil: two odd primes}, we discuss them here also for the sake of completeness.
One can see that $d(v,e)=1$ for any non-base element $v$ and $d(e,u)=1$ for any base element $u$. This also implies that $d(v,u) \leq 2$, i.e., there is a directed path of length at most $2$ from any element $v\in NB$ to any element $u\in B$.

We claim that if $u \in B$, then $d(u,NB)=1$. For this, observe that if $o(u)=2^{\al}$, $\al \geq 1$, then there exists some $v\in NB$ such that $(u,v) \in \OO_4$. Similarly, if $o(u)= p$, then there exists some $v \in NB$ such that $(u,v) \in \OO_2$ and if $o(u)=p^{\be}$, $\be \geq 2$, then there exists some $v \in NB$ such that $(u,v) \in \OO_3$. 

Noting $d(u,NB)=1$ for all $u\in B$ and $(v,e)\in \OO_1$ for all $v\in NB$, we have a directed path of length at most $2$ from any element of $B$ to $e$. Combining such a path with $(e,u')\in \OO_1$, where $u'$ is any element in $B$, we get a directed path of length at most $3$ between any two elements of $B$.

 Now, for any non-base element $v\in NB$, there exists at least one element $u \in B$ such that $o(u)=p$ and $(u,v)\in \OO_2$. Moreover since $(e,u)\in \OO_1$ for all $u\in B$, we have $d(e,v) = 2$ for all $v\in NB$. Now, as $(v',e)\in \OO_1$ for all $v'\in NB$, we get $d(v',v) \leq 1+d(e,v) = 3$, i.e., there is a directed path of length at most $3$ between any two elements in $NB$.

Now, the only case that remains to be discussed is when the source vertex $u$ is from $B$, and the destination vertex $v$ is from $NB$. At first, observe that since by assumption $G$ has a unique subgroup of order $p$, it has only one base class $[a]$ of order $p$. Also, the base class $[a]$ participates in a $C_4$-gadget with the non-base class $[v]$ due to $\OO_2$. Now, if $o(u)=2^{\alpha}$, $\alpha \geq 1$, then using Note 1, we have $d(u,a')=2$ for all $a' \in [a]$. Further using the $C_4$-gadget between $[a]$ and $[v]$, we have $d(u,v) \leq 3$. If $o(u)=p^{\beta}$, $\beta \geq 2$, then using Note 2 and the $C_4$-gadget between $[a]$ and $[v]$, we have $d(u,v) \leq 3$. If $o(u)=p$, it is easy to observe that $u \in [a]$. Now, we use the directed edges between $[a]$ and $[v]$, which are in $\OO_2$. This gives a directed path from any $u \in [a]$ to $v \in NB$ of length at most $3$ (see \Cref{fig:gmgn}).

\end{document}